\newtheorem{theorem}{Theorem}[section]
\newtheorem{lemma}{Lemma}[section]
\newtheorem{proposition}{Proposition}[section]
\newtheorem{remark}{Remark}[section]
\newtheorem{definition}{Definition}[section]
\newtheorem{problem}{Problem}[section]
\newtheorem{conjecture}{Conjecture}[section]
\newcommand\charf {\mbox{{\text 1}\kern-.24em {\text l}}}
\newcommand\fg{{\mathfrak g}}
\newcommand\fp{{\mathfrak p}}
\newcommand\g{\gamma}
\newcommand\G{\Gamma}
\newcommand\lrt{\longrightarrow}
\newcommand\ba{\backslash}
\newcommand\CP{{\mathscr P}_n}
\newcommand\CM{{\mathcal M}}
\newcommand\BC{\mathbb C}
\newcommand\BZ{\mathbb Z}
\newcommand\BR{\mathbb R}
\newcommand\BQ{\mathbb Q}
\newcommand\BH{\mathbb H}
\newcommand\Rmn{{\mathbb R}^{(m,n)}}
\newcommand\Gnm{GL_{n,m}}
\newcommand\la{\lambda}
\newcommand\Yd{{{\partial}\over {\partial Y}}}
\newcommand\Vd{{{\partial}\over {\partial V}}}
\newcommand\BD{{\mathbb D}}
\newcommand\SP{{\mathfrak P}_n}
\newcommand\Om{{\Omega}}
\begin{document}

\title{Introduction to Automorphic Forms for $GL(n,\BZ)\ltimes \BZ^{(m,n)}$  }

\author{Jae-Hyun Yang}

\address{Yang Institute for Advanced Study
\newline\indent
Hyundai 41 Tower, No. 1905
\newline\indent
293 Mokdongdong-ro, Yangcheon-gu
\newline\indent
Seoul 07997, Korea
\vskip 2mm
and
\vskip 2mm
Department of Mathematics
\newline\indent
Inha University
\newline\indent
Incheon 22212, Korea}

\email{jhyang@inha.ac.kr\ \ or\ \ jhyang8357@gmail.com}

\thanks{2020 {\it Mathematics\ Subject\ Classification.} Primary 11Fxx, 11F55.
\endgraf
{\it Keywords\ and\ phrases.} Automorphic forms, Invariant differential operators,
Maass forms.}

\begin{abstract} In this paper, we introduce the notion of automorphic forms
for $GL(n,\BZ)\ltimes \BZ^{(m,n)}$ and discuss invariant differential operators on
the Minkowski-Euclid space. The group $GL(n,\BR)\ltimes \BR^{(m,n)}$ is
the semidirect product of $GL(n,\BR)$ and the additive group
$\BR^{(m,n)}$ and is {\it not} a reductive group. The Minkowski-Euclid space is
the quotient space of $GL(n,\BR)\ltimes \BR^{(m,n)}$ by $O(n,\BR)$.
The Minkowski-Euclid space is
an important non-symmetric homogeneous space geometrically and number theoretically.
We present some open problems to be solved in the future.
\end{abstract}
\maketitle


\begin{section}{{\bf Introduction}}
\setcounter{equation}{0}
\vskip 3mm
The theory of automorphic forms for $GL(n,\BZ)$ and $SL(n,\BZ)$ has been studied extensively
for more than seven decades. In 1956 Atle Selberg\,\cite{S1} initiated the study of
automorphic forms for
$GL(n,\BZ)$ and developed the theory of harmonic analysis on the symmetric space
$GL(n,\BR)/O(n,\BR)$. We recommend two books \cite{Go} and \cite{T} for the details
of the theory of automorphic forms for $GL(n,\BZ)$ and $SL(n,\BZ)$.
\vskip 2mm
For a positive integer $n\geq 1$, we let
\begin{equation*}
  {\mathscr P}_n:=\{\,Y\in GL(n,\BR)\,|\ Y=\,{}^tY > 0\,\}
\end{equation*}
be the open convex cone in the Euclidean space $\BR^N$ with $N=\frac{n(n+1)}{2}$.
For any two positive integers $m,\,n\in\BZ^+$, we let
\begin{equation*}
  {\mathscr P}_{n,m}:={\mathscr P}_n\times \BR^{(m,n)}
\end{equation*}
be the so-called {\sf Minkowski-Euclid space} of type $(n,m)$\ (cf.\,\cite{Y3}).
\vskip 2mm
For any two positive integers $m,\,n\in\BZ^+$, we introduce the new group
$$ GL_{n,m}(\BR):=GL(n,\BR)\ltimes \BR^{(m,n)}$$
with multiplication law
\begin{equation}\label{Formula (1.1)}
  (A,a)\circ (B,b):=(AB,a\,{}^tB^{-1}+b)
\end{equation}
for all $A,B\in GL(n,\BR)$ and $a,b\in\BR^{(m,n)}$.
We note that $GL_{n,m}(\BR)$ is the semidirect product of $GL(n,\BR)$ and the
additive group $\BR^{(m,n)}.$ Then $GL_{n,m}(\BR)$ acts on ${\mathscr P}_{n,m}$
naturally and transitively by
\begin{equation}\label{Formula (1.2)}
  (A,a)\cdot (Y,V):=(AY\,{}^t\!A,(V+a)\,{}^tA)
\end{equation}
for all $(A,a)\in GL_{n,m}(\BR)$ and $(Y,V)\in {\mathscr P}_{n,m}.$
Since $O(n,\BR)$ is the stabilizer of the action (1.2) at $(I_n,0)$,
the non-symmetric homogeneous space $GL_{n,m}(\BR)/O(n,\BR)$ is diffeomorphic to
the Minkowski-Euclid space ${\mathscr P}_{n,m}$. We denote by $\BD({\mathscr P}_{n,m})$
the algebra of all differential operators on ${\mathscr P}_{n,m}$ invariant under the
action (1.2) of $GL_{n,m}(\BR)$. Using the subalgebra of $\BD({\mathscr P}_{n,m})$
containing the Laplace operator $\Delta_{n,m}$ of ${\mathscr P}_{n,m}$, we define
the concept of automorphic forms for $GL_{n,m}(\BZ)$. Here
$$GL_{n,m}(\BZ):=GL(n,\BZ)\ltimes \BZ^{(m,n)}$$
denotes the discrete subgroup of $GL_{n,m}(\BR)$.
\vskip 2mm
Let
$$ SL_{n,m}(\BR):=SL(n,\BR)\ltimes \BR^{(m,n)}$$
be the subgroup of $GL_{n,m}(\BR)$ and also let
$${\mathfrak P}_{n,m}:=\{\,(Y,V)\in {\mathscr P}_{n,m}\,|\ \det Y=1\,\}$$
be the subset of ${\mathscr P}_{n,m}$. Then $SL_{n,m}(\BR)$ acts on
${\mathfrak P}_{n,m}$ transitively via the formula (1.2). Since $SO(n,\BR)$
is the stabilizer of the action (1.2) at $(I_n,0)$,
the non-symmetric homogeneous space $SL_{n,m}(\BR)/SO(n,\BR)$ is diffeomorphic to
the manifold ${\mathfrak P}_{n,m}$. We denote by $\BD({\mathfrak P}_{n,m})$
the algebra of all differential operators on ${\mathfrak P}_{n,m}$ invariant under the
action (1.2) of $SL_{n,m}(\BR)$.

\vskip 2mm
The group $GL_{n,m}(\BR)$ is a natural group and its action (1.2) on ${\mathscr P}_{n,m}$
is a natural action. We will explain how the action (1.2) arises. Let
$$
Sp(2n,\BR)=\{ \,M\in GL(2n,\BR)\,|\ {}^tMJ_nM=J_n\,\}
$$
be the symplectic group of degree $n$, where
$$
J_n=
\begin{pmatrix}
  \,0 & I_n \\
  -I_n & 0
\end{pmatrix}
$$
is the symplectic matrix of degree $n$. We observe that $Sp(2n,\BR)$ is a simple
Lie group. Let
$$
\BH_n=\{ \Om\in \BC^{(n,n)}\,|\ \Om={}^t\Om,\ {\rm Im}\,\Om >0\,\}
$$
be the Siegel upper half plane or simply the Siegel space of degree $n$. Then
$Sp(2n,\BR)$ acts on $\BH_n$ transitively by
\begin{equation}\label{Foemula (1.3)}
  M\langle \Om \rangle =(A\Om +B)(C\Om +D)^{-1},
\end{equation}
where $M=\begin{pmatrix}
           A & B \\
           C & D
         \end{pmatrix}\in Sp(2n,\BR)$ and $\Om\in \BH_n.$
Since the unitary group $U(n)$ is the stabilizer of the action (1.3) at $i\,I_n$,
the symmetric space $Sp(2n,\BR)/U(n)$ is biholomorphic to the Siegel space $\BH_n$.
It is known that the Siegel space $\BH_n$ is an Einstein-K{\"a}hler Hermitian symmetric
space. Indeed $\BH_n$ is an important complex manifold geometrically and number
theoretically. Let $\G_n^\flat=Sp(2n,\BZ)$ be the Siegel modular group of degree $n$.
The Siegel modular variety $\mathscr{A}_n:=\G_n^\flat \ba \BH_n$ is a quasi-projective
variety and may be regarded as the moduli of principally polarized abelian varieties of
dimension $n$. The Satake compactification $\mathscr{A}_n^S$ is a normal projective
variety but is highly singular. The toroidal compactification of $\mathscr{A}_n$
obtained by the Mumford school is a smooth projective variety.
The action (1.3) induces the natural and transitive action of $GL(n,\BR)$ on
the open convex cone $\mathscr{P}_n$ given by
\begin{equation}\label{Foemula (1.4}
  g\cdot Y:= gY\,{}^tg,\qquad {\rm where}\ g\in GL(n,\BR)\ {\rm and}\ Y\in \mathscr{P}_n.
\end{equation}
\vskip 2mm
The quotient space $\mathfrak{T}_n:=GL(n,\BZ)\ba \mathscr{P}_n$ may be regarded as
the moduli of principally polarized real tori of dimension $n$. A compactification of
$\mathfrak{T}_n$ was given by Grenier (cf.\,\cite{Gr2, Gr3}).
\vskip 2mm
The relations between the cone $\mathscr{P}_n$ and the Siegel space $\BH_n$ may be described
as follows\,:
\vskip 2mm
\ \ \ The open cone $\mathscr{P}_n\qquad\qquad\hskip 30mm\qquad {\rm The\ Siegel\ space}\ \BH_n$
\vskip 0.1mm\noindent
---------------------------------------------------------------------------------------------------------------------
\begin{eqnarray*}
  GL(n,\BR)\qquad \qquad &{}& \qquad Sp(2n,\BR) \\
  \ O(n) \qquad \qquad &{}& \qquad U(n) \\
  {\rm open\ convex\ cone}  \qquad \qquad &{}& \qquad
  {\rm Einstein\!\!-\!\!Kaehler\ Hermitian\ symmetric}\\
  {\rm Minkowski\ fundamental\ domain}  \qquad \qquad &{}& \qquad
  {\rm Siegel\ fundamental\  domain}\\
  {\rm pprts} \qquad \qquad &{}& \qquad {\rm ppavs}\\
  {\rm Grenier's\ compactification} \qquad \qquad &{}& \qquad {\rm Satake\ compactification}\\
  \qquad \qquad &{}& \qquad {\rm toroidal\ compactification}\\
  {\rm automorphic\ forms\ for}\ GL(n,\BZ) \qquad \qquad &{}& \qquad
  {\rm Siegel\ modular\ forms,\ Siegel\!\!-\!\!Maass\ forms}
\end{eqnarray*}
\vskip 3mm\noindent
Here ``pprt (resp.\,ppav)" means the principally polarized real torus (resp.\, principally
polarized abelian variety). We may say that $\mathscr{P}_n$ is a real version of $\BH_n$. 
The algebra $\BD (\mathscr{P}_n)$ of invariant differential operators on $\mathscr{P}_n$ is 
commutative and also the algebra $\BD (\BH_n)$ of invariant differential operators on 
$\BH_n$ is commutative.

\vskip 3mm
We recall the Jacobi group
$$
G^J:=Sp(2n,\BR)\ltimes H_\BR^{(n,m)} ,
$$
where
$$
H_\BR^{(n,m)}=\{ (\la,\mu;\kappa)\,|\ \la,\mu\in\BR^{(m,n)},\ \kappa\in \BR^{(m,m)},\
\kappa+\mu\,{}^t\la\ \,\rm{symmetric}\ \}
$$
is the Heisenberg group endowed with the following multiplication
\begin{equation}\label{Formula (1.5)}
(\la,\mu;\kappa)\circ (\la',\mu';\kappa')=(\la+\la',\mu+\mu';\kappa+\kappa'
+\la\,{}^t\mu'-\mu\,{}^t\la')
\end{equation}
with $(\la,\mu;\kappa),\,(\la',\mu';\kappa')\in H_\BR^{(n,m)}.$
The Jacobi group $G^J$ of degree $n$ and index $m$ is the semidirect product of
$Sp(2n,\BR)$ and $H_{\BR}^{(n,m)}$
endowed with the following multiplication law
\begin{equation}\label{Formula (1.6)}
\big(M,(\lambda,\mu;\kappa)\big)\cdot\big(M',(\lambda',\mu';\kappa'\,)\big)
=\, \big(MM',(\tilde{\lambda}+\lambda',\tilde{\mu}+ \mu';
\kappa+\kappa'+\tilde{\lambda}\,^t\!\mu'
-\tilde{\mu}\,^t\!\lambda'\,)\big)
\end{equation}
with $M,M'\in Sp(2n,\BR),
(\lambda,\mu;\kappa),\,(\lambda',\mu';\kappa') \in H_{\BR}^{(n,m)}$ and
$(\tilde{\lambda},\tilde{\mu})=(\lambda,\mu)M'$. Then $G^J$ acts
on $\BH_n\times \BC^{(m,n)}$ transitively by
\begin{equation}
\big(M,(\lambda,\mu;\kappa)\big)\cdot
(\Om,Z)=\Big(M\langle\Om\rangle,(Z+\lambda \Om+\mu)
(C\Omega+D)^{-1}\Big),
\end{equation}
where
$M=\begin{pmatrix} A&B\\
C&D\end{pmatrix} \in Sp(2n,\BR),\ (\lambda,\mu; \kappa)\in
H_{\BR}^{(n,m)}$ and $(\Om,Z)\in \BH_n\times \BC^{(m,n)}.$ We note
that the Jacobi group $G^J$ is {\it not} a reductive Lie group. Since
the unitary group $U(n)$ of degree $n$ is the stabilizer of the action (1.7),
the homogeneous space $G^J/U(n)$ is biholomorphic to the complex manifold
${\mathbb H}_n\times \BC^{(m,n)}$ and is not a
symmetric space. From now on, for brevity we write
$\BH_{n,m}=\BH_n\times \BC^{(m,n)}.$ The homogeneous space $\BH_{n,m}$ is called the
{\it Siegel-Jacobi space} of degree $n$ and index $m$. Since $\BH_{n,m}$ is a
K{\"a}hler manifold, it is a symplectic manifold.
We set
$$
H_\BZ^{(n,m)}=\{\,(\lambda,\mu;\kappa)\in H_\BR^{(n,m)}\,|\ \lambda,\mu,\kappa\
{\rm are\ integral}\,\}
$$
and
$$\Gamma^J:=Sp(2n,\BZ)\ltimes H_\BZ^{(n,m)}.$$
Then $\mathscr{A}_{n,m}:=\G^J\ba \BH_{n,m}$ is the universal family of principally polarized
abelian varieties over $\mathscr{A}_n$.
Roughly $\mathscr{A}_{n,m}$ is the fibration over $\mathscr{A}_n$ with fibres as
principally polarized abelian varieties. We refer to \cite{Y5,Y8} for more details on
the geometry of the Siegel-Jacobi space $\BH_{n,m}$.
\vskip 2mm
The action (1.7) of $G^J$ on $\BH_{n,m}$ induces the natural and transitive action of
$GL_{n,m}(\BR)$ on the Minkowski-Euclid space $\mathscr{P}_{n,m}.$ We refer to
\cite{Y3, Y4, Y9} for more details on the geometry of the Minkowski-Euclid space.
$\mathfrak{T}_{n,m}:=GL_{n,m}(\BZ)\ba \mathscr{P}_{n,m}$ is the universal real torus
over the moduli space $GL(n,\BZ)\ba \mathscr{P}_n.$ Roughly $\mathfrak{T}_{n,m}$ is
a fibration over $GL(n,\BZ)\ba \mathscr{P}_n$ with fibres as principally polarized
real tori.
\vskip 2mm
The relations between the Minkowski-Euclid space $\mathscr{P}_{n,m}$ and
the Siegel-Jacobi space $\BH_{n,m}$ as follows:
\vskip 3mm\noindent
\ {\rm The\ Minkowski-Euclid\ space}\ $\mathscr{P}_{n,m}\qquad\qquad
\hskip 15mm  {\rm The\ Siegel\!\!-\!\!Jacobi\ space}\ \BH_{n,m}$
\vskip 0.1mm\noindent
----------------------------------------------------------------------------------------------------------------------
\begin{eqnarray*}
  GL(n,\BR)\ltimes \BR^{(m,n)}\qquad \qquad &{}& \qquad Sp(2n,\BR)\ltimes H_\BR^{(n,m)} \\
  \ O(n) \qquad \qquad &{}& \qquad U(n) \\
  {\rm non\!\!-\!\!symmetric\ space}  \qquad \qquad &{}& \qquad
  {\rm Kaehler\ symplectic\ space}\\
  {\rm the\ universal\ real\ torus} \qquad \qquad &{}& \qquad
  {\rm the\ universal\ abelian\ variety} \\
  {\rm pprts} \qquad \qquad &{}& \qquad {\rm ppavs}\\
   \qquad \qquad &{}& \qquad {\rm toroidal\ compactifications}\\
  {\rm automorphic\ forms\ for}\ GL_{n,m}(\BZ) \qquad \qquad &{}& \qquad
  {\rm Jacobi\ forms,\ Maass\!\!-\!\!Jacobi\ forms}\\
  \BD (\mathscr{P}_{n,m})\ {\rm is\ not\ commutative}
  \qquad \qquad &{}& \qquad
  \BD(\BH_{n,m}) \ {\rm is\ not\ commutative}
\end{eqnarray*}
\vskip 3mm\noindent
Here $\BD (\mathscr{P}_{n,m})$ denotes the algebra of all differential operators
on $\mathscr{P}_{n,m}$ invariant under the action (1.2) of $GL_{n,m}(\BR)$ on
$\mathscr{P}_{n,m}$ and $\BD(\BH_{n,m})$ denotes the algebra of all differential operators
on $\BH_{n,m}$ invariant under the action (1.7) of $G^J$ on $\BH_{n,m}$.
\vskip 3mm
In this sense, $GL_{n,m}(\BR)$ and $\mathscr{P}_{n,m}$ are important geometrically and
number theoretically. We note that the theory of automorphic forms on $\mathscr{P}_{n,m}$
for $GL_{n,m}(\BZ)$ generalizes the theory of automorphic forms on $\mathscr{P}_n$ for
$GL(n,\BZ).$ We propose to study the harmonic analysis on the Hilbert spaces
$L^2 \left( GL_{n,m}(\BZ)\ba \mathscr{P}_{n,m}\right)$ and
$L^2\left( GL_{n,m}(\BZ)\ba GL_{n,m}(\BR)\right)$.
\vskip 2mm
So far we explained why we should study automorphic forms on $\mathscr{P}_{n,m}$ for
$GL_{n,m}(\BR)$ and develop the theory of harmonic analysis on the Hilbert spaces
$L^2 \left( GL_{n,m}(\BZ)\ba \mathscr{P}_{n,m}\right)$ and
$L^2\left( GL_{n,m}(\BZ)\ba GL_{n,m}(\BR)\right)$.

\vskip 2mm
The aim of this article is to introduce new automorphic forms for $GL_{n,m}(\BZ)$
and $SL_{n,m}(\BZ)$ generalizing those for $GL(n,\BZ)$ and $SL(n,\BZ)$ and present some
open problems to be studied and solved in the future.
\vskip 2mm
This article is organized as follows. In Section 2, we give a brief review of automorphic
forms for $GL(n,\BZ)$. The theory of automorphic forms for $GL(n,\BZ)$ has been studied by Selberg, Maass,
Terras, Bump, Goldfeld and other mathematicians\,(cf.\,\cite{S1,M2,T,Go}).
The adelic version of this theory has been
studied by Langlands school. We recommend the book \cite{T} of Audrey Terras which deals extensively
with the theory of automorhic forms for $GL(n,\BZ)$ real analytically in some details.
In Section 3, we give a brief review of automorphic forms for $SL(n,\BZ)$.
The theory of automorphic forms for $SL(n,\BZ)$ has been studied by Goldfeld, Jacquet and other number
theorists. The book \cite{Go} of Goldfeld deals with many topics related to automorphic forms for
$SL(n,\BZ).$ In Section 4, we introduce the concept of new automorphic forms for
$GL(n,\BZ)\ltimes\BZ^{(m,n)}$ using the commutative subalgebra of $\BD({\mathscr P}_{n,m})$ containing
the Laplace operator of the Minkowski-Euclid space ${\mathscr P}_{n,m}$. We present some open
problems related to $GL_{n,m}(\BR)$-invariant differential operators on ${\mathscr P}_{n,m}$.
We propose to study the theory of new automorphic forms for $GL(n,\BZ)\ltimes\BZ^{(m,n)}$
in order to develop the theory of harmonic analysis on the $L^2$-space
$L^2(GL_{n,m}(\BZ)\backslash {\mathscr P}_{n,m})$. We also propose to
develop the theory of harmonic analysis on the $L^2$-space
$L^2(GL_{n,m}(\BZ)\backslash GL_{n,m}(\BR))$ representation theoretically.
In Section 5, we introduce the concept of new automorphic forms for
$SL(n,\BZ)\ltimes\BZ^{(m,n)}$ using the commutative subalgebra of $\BD({\mathfrak P}_{n,m})$ containing
the Laplace operator of the non-symmetric homogeneous space ${\mathfrak P}_{n,m}$. We present some open
problems related to $SL_{n,m}(\BR)$-invariant differential operators on ${\mathfrak P}_{n,m}$.
For brevity, we put
$$ SL_{n,m}(\BZ):=SL(n,\BZ)\ltimes \BZ^{(m,n)}.$$
We propose to study the theory of new automorphic forms for $SL_{n,m}(\BZ)$
in order to develop the theory of harmonic analysis on the $L^2$-space
$L^2(SL_{n,m}(\BZ)\backslash {\mathfrak P}_{n,m})$. We also propose to
develop the theory of harmonic analysis on the $L^2$-space
$L^2(SL_{n,m}(\BZ)\backslash SL_{n,m}(\BR))$ representation theoretically.

\vskip 3mm
We hope that this article will open a starting point to study new automorphic forms
for $GL_{n,m}(\BZ)$ and $SL_{n,m}(\BZ)$ covering their related $L$-functions, Whittaker
functions, Hecke operators, Eisenstein series, Poincar{\'e} series and so on.
In the near future we hope to understand the beautiful theory of harmonic analysis on
the four $L^2$-space $L^2(GL_{n,m}(\BZ)\backslash {\mathscr P}_{n,m})$,
$L^2(GL_{n,m}(\BZ)\backslash GL_{n,m}(\BR))$, $L^2(SL_{n,m}(\BZ)\backslash {\mathfrak P}_{n,m})$
and $L^2(SL_{n,m}(\BZ)\backslash SL_{n,m}(\BR))$

\vskip 0.31cm \noindent {\bf Notations:} \ \ We denote by
$\BQ,\,\BR$ and $\BC$ the field of rational numbers, the field of
real numbers and the field of complex numbers respectively. We
denote by $\BZ$ and $\BZ^+$ the ring of integers and the set of
all positive integers respectively. $\BR^{\times}$ (resp. $\BC^{\times}$)
denotes the group of nonzero real (resp. complex) numbers.
The symbol ``:='' means that
the expression on the right is the definition of that on the left.
For two positive integers $k$ and $l$, $F^{(k,l)}$ denotes the set
of all $k\times l$ matrices with entries in a commutative ring
$F$. For a square matrix $A\in F^{(k,k)}$ of degree $k$,
${\rm Tr}(A)$ denotes the trace of $A$. For any $M\in F^{(k,l)},\
^t\!M$ denotes the transpose of $M$. For a positive integer $n$, $I_n$
denotes the identity matrix of degree $n$.
For $A\in F^{(k,l)}$ and $B\in
F^{(k,k)}$, we set $B[A]=\,^tABA$ (Siegel's notation). For a complex matrix $A$,
${\overline A}$ denotes the complex {\it conjugate} of $A$.
${\rm diag}(a_1,\cdots,a_n)$ denotes the $n\times n$ diagonal matrix with diagonal entries
$a_1,\cdots,a_n$. For a smooth manifold, we denote by $C_c (X)$ (resp. $C_c^{\infty}(X)$
the algebra of all continuous (resp. infinitely differentiable) functions on $X$ with compact support.
$O(n):=O(n,\BR)=\{\,g\in GL(n,\BR)\,|\ g\,{}^tg={}^tgg=I_n\,\}$
is the real orthogonal matrix of degree $n$. $SO(n):=SO(n,\BR)=O(n)\cap SL(n,\BR).$
\vskip 2mm
We denote
\begin{align*}
GL_{n,m}(\BR)\! &=\! GL(n,\BR)\ltimes \BR^{(m,n)}, \quad
GL_{n,m}(\BZ)= GL(n,\BZ)\ltimes \BZ^{(m,n)}, 
\end{align*}

\begin{align*}
SL_{n,m}(\BR)\! &=\! SL(n,\BR)\ltimes \BR^{(m,n)},  \quad
SL_{n,m}(\BZ)=SL(n,\BZ)\ltimes \BZ^{(m,n)}, \\ 
\mathscr{P}_n\! &=\!\{ Y\in \BR^{(n,n)}\,|\ Y=\,{}^tY>0\ \}
\cong GL(n,\BR)/O(n,\BR),\\
\mathfrak{P}_n\! &=\!\{ Y\in \BR^{(n,n)}\,|\ Y=\,{}^tY>0,\ \det\,(Y)=1\ \}
\cong SL(n,\BR)/SO(n,\BR),\\
\mathfrak{H}_n\! &{ }\ ({\rm see\ Definition\ 3.1}),\\
\mathscr{P}_{n,m}\! &=\! \mathscr{P}_n \times \BR^{(m,n)}\cong GL_{n,m}(\BR)/O(n,\BR), \\
\mathfrak{P}_{n,m}\! &=\! \mathfrak{P}_n \times \BR^{(m,n)}\cong SL_{n,m}(\BR)/SO(n,\BR), \\
\mathfrak{H}_{n,m}\! &=\! \mathfrak{H}_n \times \BR^{(m,n)}\cong SL_{n,m}(\BR)/SO(n,\BR), \\
\G_n\!&=\! GL(n,\BZ),\quad \G^n=SL(n,\BZ),\\
\G_{n,m}\! &=\!\G_n\ltimes \BZ^{(m,n)}= GL_{n,m}(\BZ),\\
\G^{n,m}\! &=\!\G^n\ltimes \BZ^{(m,n)}= SL_{n,m}(\BZ),\\
\mathfrak{Y}_n\!&=\! \G_n\ba \mathscr{P}_n\cong GL(n,\BZ)\ba GL(n,\BR)/O(n,\BR),   \\
\mathfrak{X}_n\!&=\! \G^n\ba \mathfrak{P}_n\cong SL(n,\BZ)\ba SL(n,\BR)/SO(n,\BR),  \\
\mathfrak{Y}_{n,m}\!&=\! \G_{n,m}\ba \mathscr{P}_{n,m}\cong
GL_{n,m}(\BZ)\ba GL_{n,m}(\BR)/O(n,\BR), \\
\mathfrak{X}_{n,m}\!&=\! \G^{n,m}\ba \mathfrak{P}_{n,m}\cong
SL_{n,m}(\BZ)  \ba SL_{n,m}(\BR)/SO(n,\BR).
\end{align*}
Here $``\cong"$ denotes the diffeomorphism.
$\BD (\mathscr{P}_{n,m})$ denotes the algebra of all $GL_{n,m}(\BR)$-invariant
differential operators on $\mathscr{P}_{n,m}$.
$\BD (\mathfrak{P}_{n,m})$ denotes the algebra of all $SL_{n,m}(\BR)$-invariant
differential operators on $\mathfrak{P}_{n,m}$.
$\BD (\mathfrak{H}_{n,m})$ denotes the algebra of all $SL_{n,m}(\BR)$-invariant
differential operators on $\mathfrak{H}_{n,m}$.
$\mathscr{Z}_{n,m}$ denotes the center of $\BD (\mathscr{P}_{n,m})$.
$\mathfrak{Z}_{n,m}$ denotes the center of $\BD (\mathfrak{P}_{n,m})$ and
$\widetilde{\mathfrak{Z}}_{n,m}$ denotes the center of $\BD (\mathfrak{H}_{n,m})$.

\end{section}

\newcommand\POB{ {{\partial}\over {\partial{\overline \Omega}}} }
\newcommand\PZB{ {{\partial}\over {\partial{\overline Z}}} }
\newcommand\PX{ {{\partial}\over{\partial X}} }
\newcommand\PY{ {{\partial}\over {\partial Y}} }
\newcommand\PU{ {{\partial}\over{\partial U}} }
\newcommand\PV{ {{\partial}\over{\partial V}} }
\newcommand\PO{ {{\partial}\over{\partial \Omega}} }
\newcommand\PZ{ {{\partial}\over{\partial Z}} }
\newcommand\PW{ {{\partial}\over{\partial W}} }
\newcommand\PWB{ {{\partial}\over {\partial{\overline W}}} }
\newcommand\OVW{\overline W}
\newcommand\Rg{{\mathfrak R}_n}

\vskip 10mm
\begin{section}{{\bf Automorphic Forms for $GL(n,\BZ)$} }
\setcounter{equation}{0}
\vskip 3mm
In this section, we give a brief review of automorphic forms for $GL(n,\BZ).$
We refer to \cite{S1,M2,T} for more details.
\vskip 2mm
Let
\begin{equation*}
  {\mathscr P}_n:=\{\,Y\in \BR^{(n,n)}\,|\ Y=\,{}^tY>0 \,\}
\end{equation*}
be the open convex cone in $\BR^N$ with $N=\frac{n(n+1)}{2}$. Then the general linear group
$GL(n,\BR)$ acts on ${\mathscr P}_n$ transitively by

\begin{equation}\label{Formula (2.1)}
  g\circ Y=gY\,{}^tg\qquad {\rm for\ all}\ g\in GL(n,\BR)\ {\rm and}\ Y\in {\mathscr P}_n.
\end{equation}
Since $O(n,\BR)$ is the stabilizer of the action (2.1) at $I_n$, the symmetric space
$GL(n,\BR)/O(n,\BR)$ is diffeomorphic to ${\mathscr P}_n$ via
$$  g\cdot O(n,\BR)\longmapsto g\circ I_n=g\,{}^tg,\quad g\in GL(n,\BR).$$

We briefly review some geometrical properties on ${\mathscr P}_n.$
\vskip 0.3cm
For a coordinate $Y=(y_{ij})\in \CP,$ we put
\begin{equation*}
dY=(dy_{ij})\qquad\text{and}\qquad \PY\,=\,\left(\, {
{1+\delta_{ij}}\over 2}\, { {\partial}\over {\partial y_{ij} } }
\,\right).
\end{equation*}

\vskip 0.2cm For a fixed element $A\in GL(n,\BR)$, we put
$$Y_*=A\circ Y=AY\,^t\!A,\quad Y\in \CP.$$
Then
\begin{equation}
dY_*=A\,dY\,^t\!A \quad \textrm{and}\quad {{\partial}\over {\partial
Y_*}}=\,^t\!A^{-1} \Yd\, A^{-1}.
\end{equation}

\vskip 5mm
For a positive integer $c$, we can see easily that
\begin{equation*}  ds^2_{n;c}=c\cdot {\rm Tr}( (Y^{-1}dY)^2)  \end{equation*}
is a $GL(n,\BR)$-invariant Riemannian metric on $\CP$ and its
Laplacian is given by
\begin{equation*}
\Delta_{n;c}=\frac{1}{c} \cdot  {\rm Tr}\left( \left( Y\PY\right)^2\right),
\end{equation*}
\noindent where $ {\rm Tr}(M)$ denotes the trace of a square
matrix $M$.
\newcommand\da{ {\partial}\over {\partial y_1} }
\newcommand\db{ {\partial}\over {\partial y_2} }
\newcommand\dc{ {\partial}\over {\partial y_3} }
\vskip 2mm
For instance, we consider the case $n=2$ and $A>0$. If we write
for $Y\in {\mathscr P}_2$
$$Y=\begin{pmatrix} y_1 & y_3 \\ y_3 & y_2 \end{pmatrix}\quad
\text{and}\quad {{\partial}\over {\partial Y}}=\begin{pmatrix} \da & {\frac 12} {\dc} \\
{\frac 12} {\dc} & \db \end{pmatrix},$$ then

\begin{eqnarray*}
 ds^2_{2;A}&=& A\cdot \textrm{Tr}\big(Y^{-1}dY\,Y^{-1}dY\big)\cr
 &=&{\frac A{\big(y_1y_2-y^2_3\big)^2}}\,\,  \Big\{ y_2^2\,
 dy_1^2\,+\,y_1^2\,dy_2^2\,+\,2\,\big(y_1y_2+y_3^2\big)\,dy_3^2 \cr & & \ \ \ \
 \ \ \
 +\,2\,y_3^2 \,\,dy_1dy_2- 4\,y_2\,y_3\,\,dy_1dy_3 -4\,y_1y_3\,\,dy_2 dy_3
\Big\}
\end{eqnarray*}

and its Laplace operator $\Delta_{2;A}$ on ${\mathscr P}_2$ is

\begin{eqnarray*}
 \Delta_{2;A}&=& {\frac 1A}\cdot \textrm{Tr}  \left( \left( Y\PY\right)^2\right)\cr
&=&{\frac 1A}\, \Biggl\{ y_1^2\,{{\partial^2}\over{\partial
y_1^2}}+y_2^2\,{{\partial^2}\over{\partial y_2^2}}+ {\frac
12}(y_1y_2+y_3^2){{\partial^2}\over{\partial y_3^2}}\cr & &\ \ \
+2\left( y_3^2 {{\partial^2}\over {\partial y_1\partial y_2}}+
y_1y_3 {{\partial^2}\over {\partial y_1\partial y_3}}+y_2y_3
{{\partial^2}\over {\partial y_2\partial y_3}} \right)\cr & &\ \ \
+ {3 \over 2}\left( y_1{ {\partial}\over {\partial y_1} }+y_2{
{\partial}\over {\partial y_2} }+y_3{ {\partial}\over {\partial
y_3} }\right)\Biggl\}.
\end{eqnarray*}

We also can see that
\begin{equation}
dv_n(Y)=(\det Y)^{-{ {n+1}\over2 } }\prod_{i\leq j}dy_{ij}
\end{equation}
is a $GL(n,\BR)$-invariant volume element on $\CP$ (cf. \cite[p.\,23, see (1.21)]{T}).

\begin{theorem}
A geodesic $\alpha (t)$ joining $I_n$ and $Y\in \CP$ has the form
\begin{equation*}
\alpha (t)=\exp (t A[k]),\qquad t\in [0,1],
\end{equation*}
where
\begin{equation*}
Y=(\exp A)[k]=\exp (A[k])=\exp (\,^tkAk)
\end{equation*}
is the spectral decomposition of $Y$, where $k\in O(n,\BR),\ A={\rm diag} (a_1,\cdots,a_n)$ with all $a_j\in \BR.$
The distance of $\alpha (t) \ (0\leq t\leq 1)$ between $I_n$ and $Y$ is
\begin{equation*}
 \left( \sum_{j=1}^{n} a_j^2 \right)^{\frac{1}{2}}.
\end{equation*}
\end{theorem}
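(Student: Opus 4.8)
The plan is to reduce the problem to two classical facts about the symmetric space $\CP = GL(n,\BR)/O(n,\BR)$: first, that geodesics through the base point $I_n$ are one-parameter subgroups of the form $t\mapsto \exp(tX)$ with $X$ symmetric, acting on $\CP$ via the transitive action (2.1); and second, that the $GL(n,\BR)$-invariant metric $ds_{n;c}^2 = c\,{\rm Tr}\big((Y^{-1}dY)^2\big)$ restricts at $I_n$ to (a multiple of) the standard Euclidean inner product on the tangent space, identified with the space of symmetric matrices. I would take $c=1$ without loss of generality, since scaling the metric only scales the distance by a constant and the statement already absorbs this by stating the distance for the normalized metric.

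First I would recall the spectral decomposition $Y = \exp(A)[k] = \,^tk\,(\exp A)\,k$ with $k\in O(n,\BR)$ and $A = {\rm diag}(a_1,\dots,a_n)$; this exists because $Y$ is symmetric positive definite, so $\log Y$ is a well-defined symmetric matrix with $\log Y = A[k]$. Then I would verify directly that $\al(t) := \exp(tA[k])$ satisfies $\al(0)=I_n$ and $\al(1)=Y$, and that $\al(t) = k^{-1}\cdot_{\!} \big(\exp(tA)\big)$ in the notation of the action (2.1) restricted to $O(n,\BR)$ — more precisely $\al(t) = \,^tk\,\exp(tA)\,k = k^{-1}\circ \exp(tA)$ using $k^{-1} = \,^tk$. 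Since $O(n,\BR)$ acts by isometries of $ds_{n;c}^2$, it suffices to show that $t\mapsto \exp(tA)$ is a geodesic from $I_n$ to $\exp A$, i.e. I have reduced to the case $Y$ diagonal.

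Next, for $Y = \exp A$ with $A$ diagonal, I would observe that the curve $\be(t) = \exp(tA) = {\rm diag}(e^{ta_1},\dots,e^{ta_n})$ stays inside the totally geodesic flat submanifold of diagonal matrices in $\CP$; on this submanifold the coordinates $u_j := \log y_{jj}$ are isometric to $\BR^n$ with the Euclidean metric (up to the constant $c$), because $Y^{-1}dY$ restricted to diagonal $Y$ is the diagonal matrix with entries $dy_{jj}/y_{jj} = du_j$, so ${\rm Tr}\big((Y^{-1}dY)^2\big) = \sum_j du_j^2$. In these coordinates $\be(t)$ is the straight line $t\mapsto (ta_1,\dots,ta_n)$, which is manifestly the unique minimizing geodesic from the origin to $(a_1,\dots,a_n)$, of length $\big(\sum_j a_j^2\big)^{1/2}$. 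The main obstacle is the claim that the diagonal submanifold is totally geodesic in $\CP$ — i.e. that a geodesic of $\CP$ tangent to it stays in it — which is what licenses computing the distance intrinsically within the flat; I would handle this either by the standard symmetric-space argument (the diagonal submanifold is the fixed-point set of the isometric involution $Y\mapsto \sigma Y\sigma$ for suitable sign-change matrices $\sigma$, and fixed-point sets of isometries are totally geodesic), or, more self-containedly, by exhibiting $\be(t)$ explicitly as a solution of the geodesic equations for $ds_{n;c}^2$ and invoking uniqueness of geodesics with given initial point and velocity. Assembling these pieces, transporting back by $k\in O(n,\BR)$, gives the stated formula for $\al(t)$ and for the distance $\big(\sum_{j=1}^n a_j^2\big)^{1/2}$.
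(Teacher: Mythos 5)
The paper does not actually prove this theorem; it simply cites Terras \cite[pp.\,16--17]{T}, so there is no in-paper argument to match yours against. Your proof is the standard symmetric-space argument and is essentially sound: the reduction to diagonal $Y$ via the isometric $O(n,\BR)$-action is correct (note $\exp(tA[k])={}^tk\exp(tA)k=k^{-1}\circ\exp(tA)$ under the action (2.1)); the identification of the diagonal positive-definite matrices as a flat, totally geodesic submanifold via the fixed-point sets of the isometries $Y\mapsto \sigma Y\sigma$ for sign matrices $\sigma\in O(n,\BR)$ is a legitimate and self-contained way to see that $t\mapsto\exp(tA)$ is a geodesic; and the length computation ${\rm Tr}\bigl((\alpha^{-1}\alpha')^2\bigr)={\rm Tr}(A^2)=\sum_j a_j^2$ is correct (and conjugation-invariant, so it holds for $\alpha(t)=\exp(tA[k])$ directly). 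This is in the same spirit as Terras's treatment, which likewise exploits the isometry group to reduce to the diagonal case.

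One point you should tighten: total geodesy of the flat tells you that the straight line in $\log$-coordinates is a geodesic of $\CP$ and gives its arc length, and it tells you the line minimizes among curves \emph{inside} the flat; it does not by itself rule out a shorter path through $\CP$ leaving the flat, so it does not immediately identify the arc length with the Riemannian distance $d(I_n,Y)$. If the theorem's ``distance'' is read as the length of the geodesic segment $\alpha$, your computation suffices; if it is read as $d(I_n,Y)$, you should add that $\CP$ is a complete, simply connected manifold of nonpositive sectional curvature (a Hadamard manifold), so every geodesic segment is globally minimizing and is the unique geodesic between its endpoints --- or, alternatively, give a direct comparison inequality bounding the length of an arbitrary curve from $I_n$ to $Y$ below by $\bigl(\sum_j a_j^2\bigr)^{1/2}$. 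With that one sentence added, the argument is complete.
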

\begin{proof} The proof can be found in \cite[pp.\,16-17]{T}.\end{proof}

\vskip 2mm
We consider the so-called {\sf Maass-Selberg (differential) operators}
\begin{equation}
\delta_j:= Tr\left( \left( Y\Yd \right)^j\right),\quad j=1,2,\cdots,n.
\end{equation}
By Formula (2.2), we get
\begin{equation*}
\left( Y_* {{\partial}\over {\partial Y_*}}\right)^j=\,A\,\left(
Y\Yd\right)^j A^{-1},\quad j=1,2,\cdots,n.
\end{equation*}

\noindent for any $A\in GL(n,\BR)$. So each $\delta_j\ (1\leq j \leq n)$ is invariant
under the action (2.1) of $GL(n,\BR)$.

\vskip 0.2cm Selberg \cite{S1} proved the following.

\begin{theorem}
The algebra ${\mathbb D}(\CP)$ of all $GL(n,\BR)$-invariant differential operators on
$\CP$ is generated by $\delta_1,\delta_2,\cdots,\delta_n.$ Furthermore
$\delta_1,\delta_2,\cdots,\delta_n$ are algebraically independent and ${\mathbb D}(\CP)$
is isomorphic to the commutative ring $\BC[x_1,x_2,\cdots,x_n]$ with $n$ indeterminates $x_1,x_2,\cdots,x_n.$
\end{theorem}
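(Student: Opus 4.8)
The plan is to realize $\CP$ as the Riemannian symmetric space $G/K$ with $G=GL(n,\BR)$ and $K=O(n,\BR)$, and then to combine the general structure theory of invariant differential operators on a symmetric space with a direct computation of the principal symbols of $\delta_1,\dots,\delta_n$.

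First I would record the Cartan decomposition $\fg=\fk\oplus\fp$ of $\fg=\mathfrak{gl}(n,\BR)$, where $\fp$ is the space of real symmetric $n\times n$ matrices and $K=O(n,\BR)$ acts on $\fp$ by conjugation, and I would identify the tangent space $T_{I_n}\CP$ with $\fp$. The algebra $\BD(\CP)$ is filtered by differential order, and the principal-symbol map (composed with symmetrization) is a linear isomorphism inducing an algebra isomorphism $\mathrm{gr}\,\BD(\CP)\cong S(\fp)^K$, the algebra of $K$-invariant polynomials on $\fp$ (identifying $\fp^{\ast}\cong\fp$ via the trace form); this is standard for $G/K$ with $K$ compact (see, e.g., Helgason's books on symmetric spaces). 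Taking the maximal abelian subspace $\fa\subset\fp$ of diagonal matrices, on which the Weyl group $W=S_n$ acts by permuting the diagonal entries, the Chevalley restriction theorem gives $S(\fp)^K\cong S(\fa)^W=\BC[t_1,\dots,t_n]^{S_n}$. Equivalently, by the classical invariant theory of a symmetric matrix under orthogonal conjugation, $S(\fp)^K$ is the polynomial ring generated by the $n$ algebraically independent power sums $p_j(X):=\mathrm{Tr}(X^j)$, $1\le j\le n$.

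Next I would compute the principal symbol of $\delta_j=\mathrm{Tr}\big((Y\,\partial/\partial Y)^j\big)$. Writing the cotangent variable at $Y$ as the symmetric matrix $\Xi$ dual to $dY$, the symbol of the matrix-valued operator $Y\,\partial/\partial Y$ is $Y\Xi$, since the non-commutativity of the entries of $Y$ and of $\partial/\partial Y$ contributes only strictly lower-order terms; hence the principal symbol of $\delta_j$ is the function $(Y,\Xi)\mapsto\mathrm{Tr}\big((Y\Xi)^j\big)$, whose restriction to the cotangent fiber over $I_n$ is $\mathrm{Tr}(\Xi^j)=p_j(\Xi)$. Thus, under $\mathrm{gr}\,\BD(\CP)\cong S(\fp)^K$, the symbol of $\delta_j$ is exactly $p_j$. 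Since the $p_j$ generate $S(\fp)^K$ and are algebraically independent, the standard lifting principle for filtered algebras shows that $\delta_1,\dots,\delta_n$ generate $\BD(\CP)$ and are algebraically independent: any polynomial relation among the $\delta_j$ would, on passing to highest (weighted) order, give one among the $p_j$. Finally, because $\CP=G/K$ is a Riemannian symmetric space, $\BD(\CP)$ is commutative (Gelfand; see Helgason), so the assignment $x_j\mapsto\delta_j$ extends to an algebra isomorphism $\BC[x_1,\dots,x_n]\cong\BD(\CP)$, which is the assertion.

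The step I expect to demand the most care is the symbol identification: one must fix the identification of $T^{\ast}_{I_n}\CP$ with $\fp^{\ast}$ (tracking the factor $(1+\delta_{ij})/2$ built into $\partial/\partial Y$), check that it intertwines the two $K$-actions, and confirm that the lower-order corrections coming from the non-commutativity of $Y$ and $\partial/\partial Y$ cannot enter the top-degree part; once this is in place, everything else is a citation of standard symmetric-space theory. An alternative route, and the one taken in Terras's book, avoids the symbol map: one computes the radial part of each $\delta_j$ along $\fa$, shows that $D\mapsto(\text{radial part of }D)$ identifies $\BD(\CP)$ with the algebra of $W$-invariant constant-coefficient differential operators on $\fa$, and checks directly that $\delta_j$ maps to $p_j$; generation and algebraic independence then follow as before.
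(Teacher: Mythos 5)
Your argument is correct, but it is worth noting that the paper itself does not reprove this theorem: it simply cites Maass \cite[pp.\,64--66]{M2}, whose argument (like the closely related exposition in Terras) runs through the \emph{action on power functions}: one shows that an invariant operator $D$ is determined by the eigenvalue $\lambda_D(s)$ in $Dp_s=\lambda_D(s)p_s$, that $\lambda_{\delta_j}$ is a symmetric polynomial of degree $j$ with top term $z_1^j+\cdots+z_n^j$, and that $D\mapsto\lambda_D$ is an algebra isomorphism onto the symmetric polynomials --- essentially the Harish--Chandra homomorphism computed by hand. Your primary route is genuinely different: you work on the graded level, identifying $\mathrm{gr}\,\BD(\CP)$ with $S(\fp)^K$ via principal symbols, invoking the Chevalley restriction theorem (or classical invariant theory of symmetric matrices under $O(n)$-conjugation) to see that the power sums $\mathrm{Tr}(\Xi^j)$, $1\le j\le n$, freely generate the invariants, and then lifting. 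What your approach buys is that generation and algebraic independence drop out of a purely symbolic computation without ever analyzing eigenvalues or radial parts; what the Maass--Terras approach buys is the explicit eigenvalue data $\lambda_j(z)$ (recorded separately in the paper as Theorem 2.4), which is what one actually needs downstream for spherical functions and automorphic forms. The points you flag as delicate --- the $K$-equivariant identification $T^\ast_{I_n}\CP\cong\fp$ tracking the $(1+\delta_{ij})/2$ normalization, the fact that noncommutativity of $Y$ and $\partial/\partial Y$ only contributes lower order, and the weighted-degree argument (weight $j$ on $x_j$) needed to pass a hypothetical polynomial relation among the $\delta_j$ to one among the $p_j$ --- are exactly the right ones, and each is routine to verify; with them in place your proof is complete and correct.
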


\begin{proof} The proof can be found in \cite[pp.\,64-66]{M2}.\end{proof}

\vskip 3mm
For $s=(s_1,\cdots,s_n)\in \BC^n$, Atle Selberg \cite[pp.\,57-58]{S1} introduced the power function
$p_s:\CP\lrt \BC$ defined by
\begin{equation}
  p_s (Y):=\prod_{j=1}^{n} (\det Y_j)^{s_j},\quad Y\in \CP,
\end{equation}
where $Y_j\in \mathscr P_j\ (1\leq j\leq n)$ is the $j\times j$ upper left corner of $Y$.
Let
\begin{equation}
T_n:=\left\{\, t=
\begin{pmatrix}
           t_{11} & t_{12} & \cdots & t_{1n} \\
           0 & t_{22} & \cdots & t_{2n}\\
           0 & 0 & \ddots &  \vdots\\
           0 & 0 &   0  & t_{nn}
         \end{pmatrix}\in GL(n,\BR)\,\Big|\ t_{jj}>0,\ 1\leq j\leq n \ \right\}
\end{equation}
be the subgroup of $GL(n,\BR)$ consisting of upper triangular matrices. For $r=(r_1,\cdots,r_n)\in \BC^n$,
we define the group homomorphism $\tau_r : T_n\lrt \BC^{\times}$ by
\begin{equation}
  \tau_r (t):=\prod_{j=1}^{n} t_{jj}^{r_j},\quad t=(t_{ij})\in T_n.
\end{equation}
For $z=(z_1,\cdots,z_n)\in \BC^n$, we define the function $\phi_z: T_n\lrt \BC^{\times}$ by
\begin{equation}
  \phi_z (t):= \prod_{j=1}^{n} t_{jj}^{2 z_j+j-\frac{n+1}{2}},\quad t=(t_{ij})\in T_n.
\end{equation}
We note that $\phi_z (t)=p_s (I_n[t])$ for some $s\in\BC^n$.

\begin{proposition}
(1) For $s=(s_1,\cdots,s_n)\in \BC^n$, we put $r_j=2(s_j+\cdots + s_n),\ j=1,\cdots, n.$
Then we have
\begin{equation*}
  p_s (I_n [t])=\tau_r (t),\quad t\in T_n.
\end{equation*}
\vskip2mm
\noindent
(2) $p_s (Y[t])=p_s(I_n[t])\,p_s(Y)$ for any $Y\in \CP$ and $t\in T_n$.
\vskip 2mm
\noindent
(3) For any $D\in \BD (\CP)$, we have $Dp_s=Dp_s (I_n)\,p_s,$ i.e., $p_s$ is a common eigenfunction
of $\BD (\CP)$.
\end{proposition}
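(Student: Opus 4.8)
The plan is to obtain (1) and (2) from a single block-matrix computation with triangular matrices, and then to deduce (3) from (2) together with the invariance of $D$ and the Cholesky factorization of points of $\CP$. For (1) I would write $t\in T_n$ in $j$-plus-$(n-j)$ block form $t=\begin{pmatrix} t^{(j)} & B\\ 0 & C\end{pmatrix}$, where $t^{(j)}\in T_j$ is the $j\times j$ upper-left corner of $t$. Then ${}^tt=\begin{pmatrix}{}^t(t^{(j)}) & 0\\ {}^tB & {}^tC\end{pmatrix}$, and multiplying out $I_n[t]={}^tt\,t$ shows that its $j\times j$ upper-left corner is exactly $I_j[t^{(j)}]={}^t(t^{(j)})\,t^{(j)}$. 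Since the leading principal minors of a positive definite matrix are positive, $\det\big((I_n[t])_j\big)=(\det t^{(j)})^2=\prod_{i=1}^j t_{ii}^2>0$, so the exponents below are unambiguous, and interchanging the order of the two products gives
\[
p_s(I_n[t])=\prod_{j=1}^{n}\Big(\prod_{i=1}^{j}t_{ii}^2\Big)^{s_j}=\prod_{i=1}^{n}t_{ii}^{\,2(s_i+\cdots+s_n)}=\tau_r(t).
\]

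For (2) I would run the identical block decomposition for $Y[t]={}^tt\,Y\,t$, writing $Y=\begin{pmatrix} Y_j & \ast\\ \ast & \ast\end{pmatrix}$: the $j\times j$ upper-left corner of $Y[t]$ comes out to be $Y_j[t^{(j)}]={}^t(t^{(j)})\,Y_j\,t^{(j)}$, whence $\det\big((Y[t])_j\big)=(\det t^{(j)})^2\det Y_j=\big(\prod_{i\le j}t_{ii}^2\big)\det Y_j$. Taking the product over $j$ and separating the two factors yields $p_s(Y[t])=p_s(I_n[t])\,p_s(Y)$.

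For (3) the key observation is that Siegel's bracket is an instance of the action (2.1): $Y[t]={}^tt\,Y\,t={}^tt\circ Y$, because $g\circ Y=gY\,{}^tg$ and ${}^t({}^tt)=t$. Set $(\rho(g)f)(Y):=f(g\circ Y)$; invariance of $D\in\BD(\CP)$ means $D\rho(g)=\rho(g)D$ for every $g\in GL(n,\BR)$, while (2) says $\rho({}^tt)\,p_s=p_s(I_n[t])\,p_s$ for each $t\in T_n$, the factor $p_s(I_n[t])$ being constant in $Y$. Applying $D$ and using $\BC$-linearity,
\[
p_s(I_n[t])\,(Dp_s)=D\rho({}^tt)p_s=\rho({}^tt)(Dp_s),
\]
that is, $(Dp_s)(Y[t])=p_s(I_n[t])\,(Dp_s)(Y)$ for all $t\in T_n$ and $Y\in\CP$. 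Evaluating at $Y=I_n$ gives $(Dp_s)(I_n[t])=p_s(I_n[t])\,(Dp_s)(I_n)$; and since every $Z\in\CP$ has a Cholesky factorization $Z=I_n[t]={}^tt\,t$ with $t\in T_n$, this reads $(Dp_s)(Z)=p_s(Z)\,(Dp_s)(I_n)$ for all $Z$, i.e. $Dp_s=Dp_s(I_n)\cdot p_s$. Parts (1) and (2) are routine once one uses the block form of triangular matrices; I expect the only real obstacle in (3) to be organizational, namely remembering that the element of $GL(n,\BR)$ realizing $Y\mapsto Y[t]$ is ${}^tt$ (lower triangular, not $t$) and invoking the surjectivity of $t\mapsto I_n[t]$ from $T_n$ onto $\CP$. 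No estimates or analytic input are needed.
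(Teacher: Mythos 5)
Your proof is correct and is essentially the standard argument that the paper defers to (it only cites Terras, pp.\,39--40, where the same three steps appear): the block-triangular computation showing $(Y[t])_j = Y_j[t^{(j)}]$ gives (1) and (2), and (3) follows from the commutation of $D$ with the translation $\rho({}^tt)$ together with the Cholesky factorization $Z=I_n[t]$, $t\in T_n$. No gaps; in particular you correctly identified that the group element realizing $Y\mapsto Y[t]$ under the action $g\circ Y=gY\,{}^tg$ is ${}^tt$, and that the eigenvalue factor $p_s(I_n[t])$ is constant in $Y$ so it passes through $D$ by linearity.
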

\begin{proof} The proof can be found in \cite[pp.\,39--40]{T}.\end{proof}

\vskip 3mm
Hans Maass \cite{M2} proved the following theorem.
\begin{theorem}
(1) Let $\delta_1,\delta_2,\cdots,\delta_n\in  \BD (\CP)$ be Maass-Selberg operators given by Formula (2.4). Then
\begin{equation*}
  \delta_j \phi_z = \lambda_j (z) \phi_z, \quad 1\leq j\leq n.
\end{equation*}
where $\lambda_j (z)$ is a symmetric polynomial in $z_1,\cdots, z_n$ of degree $j$ and having the following form:
\begin{equation*}
\lambda_j (z_1,\cdots, z_n)=z_1^j+\cdots+z_n^j + {\rm terms\ of\ lower\ degree}.
\end{equation*}
\noindent
(2) The effect of $D\in \BD (\CP)$ on power functions $p_s (Y)$ determines $D$ uniquely.
\end{theorem}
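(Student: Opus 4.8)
The plan is to reduce everything to the power functions $p_s$. First, recognize $\phi_z$ as one of them: by Proposition 2.1(1), $p_s(I_n[t])=\prod_{j}t_{jj}^{\,r_j}$ with $r_j=2\sum_{k\ge j}s_k$, so choosing $s=s(z)$ with $r_j=2z_j+j-\frac{n+1}{2}$ (an invertible affine-linear change of parameters) makes $\phi_z$, viewed as a function on $\CP$ via the Cholesky decomposition $Y=I_n[t]$ with $t\in T_n$, equal to $p_{s(z)}$. Since $p_s$ is a common eigenfunction of $\BD(\CP)$ (Proposition 2.1(3)), every $\delta_j$ then satisfies $\delta_j\phi_z=\lambda_j(z)\,\phi_z$ with $\lambda_j(z)=(\delta_j p_{s(z)})(I_n)$; and because $\delta_j=\mathrm{Tr}\big((Y\,\partial/\partial Y)^j\big)$ has polynomial coefficients while $p_s(Y)=\prod_k(\det Y_k)^{s_k}$, applying $\delta_j$ and evaluating at $I_n$ yields a polynomial in $s$, hence a polynomial $\lambda_j$ in $z$.

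Next I would isolate the top-degree part. Put $M(Y;s):=\big(Y\,\partial/\partial Y\big)\log p_s$, a matrix-valued function linear in $s$ whose value at $Y=I_n$ is $\mathrm{diag}\big(\sum_{k\ge 1}s_k,\dots,\sum_{k\ge n}s_k\big)=\mathrm{diag}(r_1/2,\dots,r_n/2)$. Writing $\delta_j$ out as a sum of $j$-fold operator products of the entries of $Y\,\partial/\partial Y$ and keeping track of the degree in $s$, the contributions in which every differentiation falls on the exponential factor $p_s$ assemble into $p_s\,\mathrm{Tr}(M^j)$, and all other contributions have strictly smaller degree in $s$; so $\delta_j p_s=p_s\big(\mathrm{Tr}(M^j)+(\text{lower order in }s)\big)$. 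Evaluating at $I_n$ gives $\lambda_j(z)=\sum_k (r_k/2)^j+(\text{lower degree})$, and since $r_k/2=z_k+\frac{2k-n-1}{4}$ the homogeneous degree-$j$ part of $\lambda_j$ in $z$ is exactly $z_1^j+\cdots+z_n^j$; in particular $\deg\lambda_j=j$.

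The symmetry of $\lambda_j$ in $z_1,\dots,z_n$ is the one step that is not formal, and is where I expect the real work. The cleanest route is through zonal spherical functions on $\CP=GL(n,\BR)/O(n,\BR)$: the spherical function $\varphi_z$ obtained by averaging $\phi_z$ over $O(n,\BR)$ is again a $\BD(\CP)$-eigenfunction with the same eigenvalues $\lambda_j(z)$, and the Harish-Chandra theorem gives $\varphi_z=\varphi_{wz}$ for every $w$ in the Weyl group $S_n$, whence $\lambda_j(z)=\lambda_j(wz)$. Alternatively one can proceed by rank-one reduction: for an adjacent transposition $w_i=(i,i+1)$ pass to the standard parabolic whose Levi carries a single $GL_2$-block in positions $i,i+1$, use the classical $s\leftrightarrow 1-s$ symmetry of the $GL_2$ power function $y^s$ under the hyperbolic Laplacian, and propagate it through the remaining commuting variables; since the $w_i$ generate $S_n$, full symmetry follows. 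This completes part (1).

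Part (2) is then formal. Let $\Lambda\colon\BD(\CP)\lrt\BC[z_1,\dots,z_n]$ be the algebra homomorphism $D\mapsto\lambda_D$ determined by $Dp_s=\lambda_D(s)\,p_s$; the claim is exactly that $\Lambda$ is injective. By Selberg's structure theorem (Theorem 2.2), $\BD(\CP)=\BC[\delta_1,\dots,\delta_n]$ is a polynomial ring, so a nonzero $D$ equals $P(\delta_1,\dots,\delta_n)$ for a nonzero polynomial $P$, with $\lambda_D=P\circ(\lambda_1,\dots,\lambda_n)$; it therefore suffices that the polynomial map $(\lambda_1,\dots,\lambda_n)\colon\BC^n\to\BC^n$ be dominant, for then $P\circ(\lambda_1,\dots,\lambda_n)\equiv 0$ forces $P\equiv 0$. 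Dominance follows at once from the previous step: the Jacobian determinant $\det\big(\partial\lambda_i/\partial z_j\big)$ has top-degree part $\det\big(i\,z_j^{\,i-1}\big)=n!\prod_{j<k}(z_k-z_j)$, which is not identically zero, so the Jacobian does not vanish identically and the map is dominant. Hence $D$ is uniquely determined by its effect on the family $\{p_s\}_{s\in\BC^n}$ of power functions.
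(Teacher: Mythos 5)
Your sketch is correct. Note first that the paper gives no argument of its own for this theorem --- it only points to Maass (pp.~70--76) and Terras (pp.~44--48) --- so the real comparison is with those classical proofs, and your reconstruction follows the standard (Terras-style) route: identify $\phi_z$ with the power function $p_{s(z)}$ via Proposition 2.1, read off the top-degree part of the eigenvalue from the terms in which every factor of $\bigl(Y\frac{\partial}{\partial Y}\bigr)^j$ falls on $p_s$ (giving $\sum_k (r_k/2)^j$ with $r_k/2=z_k+\frac{2k-n-1}{4}$, hence leading part $z_1^j+\cdots+z_n^j$), and deduce part (2) from Selberg's Theorem 2.2 together with dominance of $z\mapsto(\lambda_1(z),\dots,\lambda_n(z))$. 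The Jacobian computation is sound: every term in the expansion of $\det(\partial\lambda_i/\partial z_j)$ has degree at most $n(n-1)/2$, so nothing can cancel the top-degree Vandermonde $n!\prod_{j<k}(z_k-z_j)$. You also correctly isolate the symmetry of $\lambda_j$ as the one substantive step. Appealing to Harish-Chandra's functional equation $\varphi_{wz}=\varphi_z$ is legitimate and non-circular provided that equation is taken from its independent proof (integral representation and $c$-function); the shift $j-\frac{n+1}{2}$ in the definition of $\phi_z$ is exactly the $\rho$-shift that makes the Weyl group $S_n$ act by permuting the $z_j$, so the deduction $\lambda_j(z)=\lambda_j(wz)$ goes through, the $K$-average being nonzero because $h_{s}(I_n)=1$. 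Maass's original treatment obtains the symmetry instead by a longer direct computation, which is more elementary but buys nothing extra here. One caveat: do not later derive the spherical functional equation from this symmetry combined with the uniqueness statement of Proposition 2.2(1), or the argument becomes circular.
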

\begin{proof} The proof can be found in \cite[pp.\,70--76]{M2} or \cite[pp.\,44--48]{T}.\end{proof}

\vskip 3mm
A function $h:\CP\lrt \BC$ is said to be $\textsf{spherical}$ if $h$ satisfies
the following properties (2.9)--(2.11):
\begin{equation}
  h(Y[k])=h(\,^tk Yk)=h(Y)\quad {\rm for\ all}\ Y\in \CP\ {\rm and}\ k\in O(n,\BR).
\end{equation}
\begin{equation}
h\ {\rm is\ a\ common\ eigenfunction\ of} \ \BD (\CP).
\end{equation}
\begin{equation}
h(I_n)=1.
\end{equation}

\vskip 2mm
For the present being, we put $G=GL(n,\BR)$ and $K=O(n,\BR)$. For $s=(s_1,\cdots,s_n)\in \BC^n$,
we define the function
\begin{equation}
h_s (Y):=\int_{K} p_s(Y[k])\,dk, \quad Y\in \CP,
\end{equation}
where $dk$ is a normalized measure on $K$ so that $\int_K dk =1.$ It is easily seen that $h_s (Y)$ is a spherical function on $\CP.$ Selberg \cite[pp.\,53--57]{S1} proved that these $h_s (Y)$ are the only spherical functions on $\CP.$ If $f\in C_c (\CP)$, the $\textsf{Helgason-Fourier\ transform}$ of $f$ is defined to be the function ${\mathscr H}f:\BC^n\times K\lrt \BC$\,:
\begin{equation}
{\mathscr H}f (s,k):=\int_{\CP} f(Y)\,\overline{p_s(Y[k])}\,dv_n (Y), \quad
{\rm for\ all}\ (s,k)\in \BC^n\times K,
\end{equation}
where $p_s$ is the Selberg power function (see Formula (2.5)) and $dv_n (Y)$ is a $GL(n,\BR)$-invariant volume element on $\CP$ (see Formula (2.3)).

\begin{proposition}
(1) The spherical function $h$ on $\CP$ corresponding to the eigenvalues
$\lambda_1,\cdots,\lambda_n\in \BC$ with
\begin{equation*}
\delta_i h= \lambda_i h,\quad 1\leq i\leq n
\end{equation*}
is unique. Here $\delta_1,\delta_2\cdots,\delta_n$ are Maass-Selberg operators
on $\CP$ defined by Formula (2.4).
\vskip 2mm \noindent
(2) Let $f\in C_c (\CP)$ be a common eigenfunction of $\BD (\CP)$, i.e., $Df=\lambda_D f \,(\lambda_D\in \BC)$ for all $D\in \BD (\CP).$ Define $s\in \BC^n$ by
\begin{equation*}
Dp_s=\lambda_D\, p_s, \quad D\in \BD (\CP).
\end{equation*}
If $g\in C^{\infty}_c (K\backslash G/K)$ is a $K$-bi-invariant function on $G$ satisfying the condition $g(x)=g(x^{-1})$ for all $x\in G$, then
\begin{equation*}
f \star g = {\hat g}({\bar s}) f,
\end{equation*}
where $\star$ denotes the convolution operator and
\begin{equation*}
{\hat g}({\bar s}):=\int_{\CP} g(Y)\,\overline{p_{\bar s}(Y[k])}\,dv_n (Y).
\end{equation*}
Conversely, suppose that $f\in C^{\infty}(\CP)$ is a $K$-invariant eigenfunction of all convolution operators with $g\in C^{\infty}_c (\CP).$ Then $f$ is a common eigenfunction of $\BD (\CP)$.
\end{proposition}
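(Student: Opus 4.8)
\smallskip
\noindent\textbf{Proof strategy.} The plan is to prove part~(1) first and then deduce part~(2) from it; inside part~(1), existence is routine and uniqueness is the essential point. For existence, note that by Theorem~2.2 the Maass--Selberg operators $\delta_1,\dots,\delta_n$ are algebraically independent generators of $\BD(\CP)$, so $(\lambda_1,\dots,\lambda_n)$ determines an algebra homomorphism $\chi\colon\BD(\CP)\lrt\BC$ with $\chi(\delta_j)=\lambda_j$. By Theorem~2.3 the eigenvalue $\lambda_j(z)$ of $\delta_j$ on $\phi_z$ equals $z_1^j+\cdots+z_n^j$ plus a symmetric polynomial of degree $<j$; factoring through the elementary symmetric functions of $z$, the map $z\longmapsto(\lambda_1(z),\dots,\lambda_n(z))$ is the composition of the surjection $z\mapsto(e_1(z),\dots,e_n(z))$ with a triangular polynomial automorphism of $\BC^n$, hence surjective. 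So there is a $z$ with $\lambda_j(z)=\lambda_j$ for all $j$, and by the remark after~(2.8) an $s\in\BC^n$ with $\phi_z(t)=p_s(I_n[t])$; then the power function $p_s$ is a common eigenfunction of $\BD(\CP)$ (Proposition~2.1(3)) with $\chi$ as its eigenvalue homomorphism. Finally $h_s(Y)=\int_K p_s(Y[k])\,dk$ is the desired spherical function: property~(2.9) is invariance of $dk$; property~(2.11) holds because $I_n[k]=I_n$ for $k\in O(n,\BR)$; and for~(2.10), any $D\in\BD(\CP)$ is in particular $O(n,\BR)$-invariant, so it commutes with the $K$-averaging and Proposition~2.1(3) gives $Dh_s=(Dp_s)(I_n)\,h_s=\chi(D)\,h_s$.

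Uniqueness in part~(1) is where the real work is, and I expect it to be the main obstacle. It amounts to showing that a $K$-invariant joint eigenfunction of $\BD(\CP)$ smooth near $I_n$ is determined by its eigenvalue homomorphism up to a scalar; the spherical functions, normalized by~(2.11), are then unique. I would prove this by the classical radial-part reduction. By the spectral decomposition of Theorem~2.1 a $K$-invariant smooth function on $\CP$ is a symmetric-group-invariant function $\psi(a_1,\dots,a_n)$ on $\BR^n$, and every $D\in\BD(\CP)$ has a radial part $\mathrm{rad}(D)$, a differential operator in the $a_j$; the radial-part map is injective, so by Theorem~2.2 its image is the polynomial algebra generated by $\mathrm{rad}(\delta_1),\dots,\mathrm{rad}(\delta_n)$, whose leading symbols are the power sums in the cotangent variables. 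Hence the overdetermined system $\mathrm{rad}(\delta_j)\psi=\lambda_j\psi$ $(1\le j\le n)$ has an $n!$-dimensional local solution space near a point with the $a_j$ distinct, while requiring a solution to continue smoothly and symmetrically across every wall $a_i=a_j$ down to the fixed point $a=0$ reduces the space to dimension one. As $h_s$ and any competitor $h$ both lie in this one-dimensional space with $h_s(I_n)=h(I_n)=1$, they coincide. (One may instead quote Selberg's classification of the spherical functions as exactly the $h_s$ together with the Weyl invariance $h_s=h_{\sigma s}$ of the Harish--Chandra spherical integral, but the latter is itself the hard input.)

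For the first assertion of part~(2), regard $f$ as a right-$O(n,\BR)$-invariant function on $G=GL(n,\BR)$. Using $g(w)=g(w^{-1})$ we may write $(f\star g)(x)=\int_G f(xw)\,g(w)\,dw$; since $g$ is left-$K$-invariant, replacing $w$ by $kw$ and averaging over $k\in K=O(n,\BR)$ gives $(f\star g)(x)=\int_G\Phi_x(w)\,g(w)\,dw$ where $\Phi_x(w):=\int_K f(xkw)\,dk$. Now $\Phi_x$ is $K$-bi-invariant in $w$ (right-invariant because $f$ is, left-invariant by reindexing the $k$-integral), and since each $D\in\BD(\CP)$ is $GL(n,\BR)$-invariant it passes under both integrals, so $\Phi_x$ is a $K$-invariant joint eigenfunction of $\BD(\CP)$ with the same eigenvalues $\lambda_D$ as $f$. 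Because $s$ is defined by $Dp_s=\lambda_D p_s$, the spherical function $h_s=\int_K p_s(\,\cdot\,[k])\,dk$ has those eigenvalues too, so part~(1) forces $\Phi_x=\Phi_x(I_n)\,h_s$; and $\Phi_x(I_n)=\int_K f(xk)\,dk=f(x)$ by right-$K$-invariance of $f$. Hence $(f\star g)(x)=f(x)\int_G h_s(w)\,g(w)\,dw$. Finally, for $Y\in\CP$ the matrices $(Y[k])_j$ are positive definite, so $\overline{p_{\bar s}(Y[k])}=p_s(Y[k])$ and $h_s(Y)=\int_K\overline{p_{\bar s}(Y[k])}\,dk$; since $g$ and $dv_n$ are $GL(n,\BR)$-invariant, $\int_G h_s(w)\,g(w)\,dw=\int_{\CP}g(Y)\,\overline{p_{\bar s}(Y[k])}\,dv_n(Y)=\hat g(\bar s)$, and we conclude $f\star g=\hat g(\bar s)\,f$.

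For the converse, recall that every $D\in\BD(\CP)$ acts on functions on $\CP$ by right convolution with a $K$-bi-invariant compactly supported distribution $\delta_D$ supported at $I_n$ (this is precisely the $GL(n,\BR)$- and $O(n,\BR)$-invariance of $D$). Choosing $K$-bi-invariant $g_\varepsilon\in C^\infty_c(\CP)$ with $g_\varepsilon\to\delta_D$ weakly, we get $f\star g_\varepsilon\to Df$ pointwise. If $f\not\equiv0$, fix $Y_0$ with $f(Y_0)\neq0$; from the hypothesis $f\star g_\varepsilon=c(g_\varepsilon)\,f$ we obtain $c(g_\varepsilon)=(f\star g_\varepsilon)(Y_0)/f(Y_0)\to(Df)(Y_0)/f(Y_0)=:\lambda_D$, and therefore $Df=\lim_\varepsilon(f\star g_\varepsilon)=\lambda_D\,f$. (Taking $g_\varepsilon$ an ordinary $K$-invariant approximate identity gives $c(g_\varepsilon)\to1$, so the relevant eigenvalues are nonzero and the argument is not vacuous.) Thus $f$ is a common eigenfunction of $\BD(\CP)$, and part~(2) is proved.
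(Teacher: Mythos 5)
The paper offers no argument of its own here --- its ``proof'' is a pointer to Selberg and Terras --- so your proposal has to stand on its own. Most of it does. Part (2) is handled correctly and by essentially the classical route: writing $(f\star g)(x)=\int_G\Phi_x(w)g(w)\,dw$ with $\Phi_x(w)=\int_K f(xkw)\,dk$, checking that $\Phi_x$ is a $K$-bi-invariant joint eigenfunction with the same eigenvalues as $f$ and $\Phi_x(e)=f(x)$, and then invoking part (1). One small point you should make explicit: you need that a $K$-invariant joint eigenfunction with the eigenvalues of $h_s$ which \emph{vanishes} at $I_n$ is identically zero; this does follow from (1) as stated (otherwise adding it to $h_s$ would produce a second normalized spherical function with the same eigenvalues), but it is not literally what (1) says. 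Your identification of $\int_G h_s\,g$ with ${\hat g}({\bar s})$ via $\overline{p_{{\bar s}}(Y[k])}=p_s(Y[k])$ is fine, the limiting argument for the converse (invariant operators as convolution against $K$-bi-invariant distributions supported on the coset of $I_n$, approximated by smooth $g_\varepsilon$) is legitimate provided you control the order and supports of the $g_\varepsilon$, and the existence half of (1) --- surjectivity of $z\mapsto(\lambda_1(z),\dots,\lambda_n(z))$ through the power sums, then $K$-averaging of $p_s$ --- is correct.

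The gap is exactly where you predicted: uniqueness in part (1). Your argument there consists of two assertions --- that the radial system $\mathrm{rad}(\delta_j)\psi=\lambda_j\psi$ has an $n!$-dimensional local solution space off the walls, and that requiring smooth symmetric continuation across the walls $a_i=a_j$ and through $a=0$ cuts this space down to dimension one --- and neither is proved. These two counts \emph{are} the theorem: they are the content of Selberg's pp.~53--56 (equivalently, of the Harish--Chandra series expansion of spherical functions and the analysis of its exponents at the walls), and every known proof of them requires genuine work, whether via the Frobenius/indicial analysis you allude to, via Harish--Chandra's recursion for the series coefficients on the positive chamber, or via Selberg's original point-pair-invariant argument. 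Note also that the seemingly softer alternative --- deriving the functional equation $\int_K h(g k\circ Y)\,dk=h(g\circ I_n)h(Y)$ for a normalized $K$-invariant joint eigenfunction and then identifying characters of the commutative convolution algebra --- secretly uses the same uniqueness statement (one must know that a joint eigenfunction vanishing at the base point vanishes identically), so it is not an escape route. As written, then, your proposal correctly reduces everything to the uniqueness in (1) but does not prove it; to be complete it would need either the wall analysis carried out in detail or an honest citation of that specific result, which is in effect what the paper itself does.
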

\begin{proof} The proof can be found in \cite[pp.\,53--56]{S1} or \cite[pp.\,67--69]{T}.\end{proof}

\vskip 3mm
The fundamental domain ${\mathfrak R}_n$ for $GL(n,\BZ)\ba \CP$ which was
found by H. Minkowski \cite{Mi} is defined as a subset of $\CP$
consisting of $Y=(y_{ij})\in \CP$ satisfying the following
conditions (M.1)--(M.2)\ (cf.\,\cite[p.\,123]{M2}):

\vskip 0.1cm
(M.1)\ \  $aY\,^ta\geq
y_{kk}$\ \ for every $a=(a_i)\in\BZ^n$ in which $a_k,\cdots,a_n$
are relatively prime for \\
\indent \ \ \ \ \ \ \ \ \ $k=1,2,\cdots,n$.

\vskip 0.1cm
(M.2)\ \ \ $y_{k,k+1}\geq 0$ \ for $k=1,\cdots,n-1.$

\vskip 0.1cm
We say
that a point of $\Rg$ is {\it Minkowski reduced} or simply {\it
M}-{\it reduced}. $\Rg$ has the following properties (R1)--(R4)\,(cf.\,\cite[p.\,139]{M2}):

\vskip 0.1cm
(R1) \ For any $Y\in\CP,$ there exist a matrix $A\in
GL(n,\BZ)$ and $R\in\Rg$ such that
\\
\indent \ \ \ \ \ \ \ \ \
$Y=R[A]$. That is,
\begin{equation*}
  GL(n,\BZ)\circ \Rg=\CP.
\end{equation*}
\indent (R2)\ \ $\Rg$ is a convex cone through the origin bounded
by a finite number of hyperplanes.
\\
\indent \ \ \ \ \ \ \ \
$\Rg$ is closed in $\CP$.

\vskip 0.1cm
(R3) If $Y$ and $Y[A]$ lie in $\Rg$ for $A\in
GL(n,\BZ)$ with $A\neq \pm I_n,$ then $Y$ lies on the boundary
\\
\indent \ \ \ \ \ \ \
$\partial \Rg$ of $\Rg$. Moreover $\Rg\cap (\Rg [A])\neq
\emptyset$ for only finitely many $A\in GL(n,\BZ).$

\vskip 0.1cm (R4) If $Y=(y_{ij})$ is
an element of $\Rg$, then
\begin{equation*}
y_{11}\leq y_{22}\leq \cdots \leq y_{nn}\quad \text{and}\quad
|y_{ij}|<{\frac 12}y_{ii}\quad \text{for}\ 1\leq i< j\leq n.
\end{equation*}
\indent We refer to \cite[pp.\,123--124]{M2} for more details.

\vskip 0.3cm
$\CP$ parameterizes principally polarized real tori of dimension $n$ (cf.\,\cite{Y4}). The arithmetic quotient $GL(n,\BZ)\backslash \CP$ is the moduli space of isomorphism classes of principally polarized
real tori of dimension $n$.
According to (R2) we see that $\Rg$ is a semi-algebraic set with real analytic structure.
Unfortunately $GL(n,\BZ)\backslash \CP$ does not admit the structure of a real algebraic variety and does not admit a compactification which is defined over the rational number field $\BQ$\,(cf.\,\cite{Sil} or
\cite{GoT}).

\vskip 3mm
We give the definition of automorphic forms for $GL(n,\BZ)$ given by A. Terras\,
(cf. \cite[p.\,182]{T}).

\begin{definition}
A real analytic function $f:\CP\lrt \BC$ is said to be a {\sf automorphic form} for $GL(n,\BZ)$
if it satisfies the following conditions {\rm (A1)}--{\rm (A3)}\,:
\vskip 2mm\noindent
\ {\rm (A1)} \,$f(\gamma Y\,{}^t\gamma)=f(Y)$ \ \ for all $Y\in \CP$ and all $\gamma\in GL(n,\BZ)$\,;
\vskip 2mm\noindent
\ {\rm (A2)} $f$ is an eigenfunction of all $D\in \BD(\CP)$\; i.e., $Df=\la_D f$ for some eigenvalue
$\la_D$;
\vskip 2mm\noindent
\ {\rm (A3)} $f$ has at most polynomial growth at infinity\,; i.e.,
$$|f(Y)|\leq C\,|p_s(Y)|\qquad {\rm for\ some}\ s\in\BC^n\ {\rm and}\ C>0.$$
\end{definition}
We set $\G=GL(n,\BZ)$. We denote by ${\bf A}(\G,\la)$ the space of all automorphic forms for $\G$
with a given eigenvalue system $\la$. An automorphic form $f$ in ${\bf A}(\G,\la)$ is called
a {\sf cusp form} for $\G$ if for any $k$ with $1\leq k\leq n-1,$ we have
\begin{equation}
  \int_{X\in T^{(k,n-k)}} f\left(
\,{}^{{}^{{}^{{}^\text{\scriptsize $t$}}}}\!\!\!\!\begin{pmatrix}
                                        I_k & x \\
                                        0 & I_{n-k}
                                      \end{pmatrix} Y
                                      \begin{pmatrix}
                                        I_k & x \\
                                        0 & I_{n-k}
                                      \end{pmatrix}\right)dx=0\quad {\rm for\ all}\ Y\in \CP.
\end{equation}
Here $T=\BR/\BZ$ denotes a circle, that is, a one-dimensional torus and $T^{(k,n-k)}$
denotes the set of all $k\times (n-k)$ matrices with entries in $T$. The condition (A3)
implies the vanishing of the constant terms in some Fourier expansions of $f(Y)$ as
a periodic function in the $x$-variable in partial Iwasawa coordinates.

\begin{remark}
Borel and Jacquet defined automorphic forms for a connected reductive group over $\BQ$
({\rm cf.}\,\cite[pp.\,199--200]{Bo} and \cite[pp.\,189--190]{B-J}). The definition given by
Borel and Jacquet is slightly different from Definition 2.1 given by Terras.
\end{remark}

\vskip 3mm
One of the motivations to study automorphic forms for $GL(n,\BZ)$ is the need to study various kinds
of $L$-functions with many gamma factors in their functional equations. Another motivation for
the study of automorphic forms for $GL(n,\BZ)$ is to develop the theory of harmonic analysis on
$L^2 (GL(n,\BZ)\backslash \CP)$ and $L^2 (GL(n,\BZ)\backslash GL(n,\BR))$ which involves
in the unitary representations of $GL(n,\BR)$.

\begin{remark}
Using the Grenier operator defined by Douglas Grenier ({\rm cf.}\,\cite{Gr2}), we can
define the notion of {\sf stable automorphic forms} for $GL(n,\BZ)$. We use
stable automorphic forms to study the geometric and arithmetic properties of the
universal family of principal polarized real tori. We refer to \cite{Y7} for more details.
\end{remark}

\end{section}


\vskip 10mm
\begin{section}{{\bf Automorphic Forms for $SL(n,\BZ)$}}
\setcounter{equation}{0}

\vskip 3mm
Let
\begin{equation*}
  \SP:=\left\{ Y\in \BR^{(n,n)}\,|\ Y=\,^tY >0,\ \det (Y)=1\ \right\}.
\end{equation*}
be a symmetric space associated to $SL(n,\BR)$. Indeed, $SL(n,\BR)$ acts on $\SP$ transitively by
\begin{equation}
  g\circ Y= gY\,{}^tg,\qquad g\in SL(n,\BR),\ Y\in \SP.
\end{equation}
Thus $\SP$ is a smooth manifold diffeomorphic to the symmetric space $SL(n,\BR)/SO(n,\BR)$ through the bijective map
\begin{equation*}
  SL(n,\BR)/SO(n,\BR) \lrt \SP,\qquad g\cdot SO(n,\BR) \mapsto g\circ I_n=\,g\,{}^tg,\quad g\in SL(n,\BR).
\end{equation*}

\vskip 3mm
For $Y\in\SP$, we have a partial Iwasawa decomposition
\begin{equation}
  Y= \begin{pmatrix}
       v^{-1} & 0 \\
       0 & v^{1/(n-1)}W
      \end{pmatrix}
       \left[ \begin{pmatrix}
                1 & ^tx \\
                0 & I_{n-1}
              \end{pmatrix} \right] =\begin{pmatrix}
                v^{-1} & v^{-1}\,^tx \\
                v^{-1}x & v^{-1}x\,^tx + v^{1/(n-1)}W
              \end{pmatrix}
\end{equation}
where $v>0,\ x\in \BR^{(n-1,1)}$ and $W\in \mathfrak{P}_{n-1}.$ From now on, for brevity, we
write $Y=[v,x,W]$ instead of the decomposition (3.2). In these coordinates $Y=[v,x,W]$,
\begin{equation*}
  ds_Y^2 =\frac{n}{n-1}\, v^{-2} dv^2 + 2\,v^{-n/(n-1)} W^{-1}[dx] + ds_W^2
\end{equation*}
is a $SL(n,\BR)$-invariant metric on $\SP$, where $dx=\,^t(dx_1,\cdots, dx_{n-1})$ and
$ds_W^2$ is a $SL(n-1,\BR)$-invariant metric on $\mathfrak P_{n-1}$.
The Laplace operator $\Delta_n$ of $(\SP, ds_Y^2)$ is given by
\begin{equation*}
   \Delta_n=\frac{n-1}{n}\, v^{2} {{\partial^2}\over {\partial v^2}} - \frac{1}{n}
   {{\partial}\over {\partial v}} + v^{n/(n-1)}\, W \left[ {{\partial}\over {\partial x}} \right]+\Delta_{n-1}
\end{equation*}
inductively, where if $x=\,^t(x_1,\cdots,x_{n-1})\in \BR^{(n-1,1)}$,
$${{\partial}\over {\partial x}}=\,{}^{{}^{{}^{{}^\text{\scriptsize $t$}}}}\!\!\!
\left( {{\partial}\over {\partial x_1}},\cdots,
{{\partial}\over {\partial x_{n-1}}}  \right)$$
and $\Delta_{n-1}$ is the Laplace operator of $(\mathfrak P_{n-1}, ds^2_W)$.
\begin{equation*}
d\mu_n=v^{-(n+2)/2}\, dv\, dx\, d\mu_{n-1}
\end{equation*}
is a $SL(n,\BR)$-invariant volume element on $\SP$, where $dx=dx_1\cdots dx_{n-1}$ and
$d\mu_{n-1}$ is a $SL(n-1,\BR)$-invariant volume element on $\mathfrak P_{n-1}$.

\vskip 3mm
Following earlier work of Minkowski\,\cite{Mi}, Siegel \cite{Si} showed that the volume of the fundamental domain
$SL(n,\BZ)\backslash \SP$ is given as follows\,:
\begin{equation}
  {\rm Vol}(SL(n,\BZ)\backslash \SP)
  =\int_{SL(n,\BZ)\backslash \SP}d\mu_n=
  n\,2^{n-1}\prod_{k=2}^{n}{{\zeta (k)}\over {{\rm Vol}(S^{k-1})}},
\end{equation}
where
\begin{equation*}
{\rm Vol}(S^{k-1})={{2\,(\sqrt{\pi})^k}\over {\Gamma(k/2)}}
\end{equation*}
denotes the volume of the $(k-1)$-dimensional sphere $S^{k-1}$, $\Gamma (x)$ denotes the usual Gamma function and $\zeta (k)=\sum_{m=1}^{\infty} m^{-k}$ denotes the Riemann zeta function. The proof of (3.3) can be found in \cite[pp.\,27-37]{Go} and \cite{Ga}.

\vskip 3mm
Let $\BD(\mathfrak{P}_n)$ be the algebra of all differential operators on $\mathfrak{P}_n$
invariant under the action (3.1) of $SL(n,\BR)$. It is well known (cf.\,\cite{HC1,HC2,He})
that $\BD(\mathfrak{P}_n)$ is commutative and is isomorphic to the polynomial algebra $\BC [x_1,x_2,\cdots,x_{n-1}]$ with $n$ indeterminates $x_1,x_2,\cdots,x_{n-1}$.
We observe that $n-1$ is the rank of $SL(n,\BR)$, i.e., the rank of the symmetric space
$SL(n,\BR)/SO(n,\BR)$.
In \cite{BCH}, using the Maass-Selberg
operators $\delta_1,\delta_2,\cdots,\delta_n$, Brennecken, Ciardo and Hilgert found
explicit generators $E_1,E_2,\cdots,E_{n-1}$ of $\BD(\mathfrak{P}_n)$. Obviously
$E_1,E_2,\cdots,E_{n-1}$ are algebraically independent.

\vskip 3mm
If we repeat this partial decomposition process for $Y\in \mathfrak{P}_n$, we get the Iwasawa decomposition
\begin{equation}
  Y=y^{-1} {\rm{diag}}
  \left( 1,y_1^2,(y_1y_2)^2,\cdots,(y_1y_2\cdots y_{n-1})^2 \right)
  \left[ \begin{pmatrix}
           1 & x_{12} & \cdots & x_{1n} \\
           0 & 1 & \cdots & x_{2n}\\
           0 & 0 & \ddots &  \vdots\\
           0 & 0 &   0  & 1
         \end{pmatrix} \right],
\end{equation}
where $y>0,\ y_j\in \BR \,(1\leq j\leq n-1)$ and $x_{ij}\in\BR\, (1\leq i < j\leq n).$ Here
$y=y_1^{2(n-1)}\cdots y_{n-1}^2$ and ${\rm{diag}} (a_1,\cdots,a_n)$ denotes the $n\times n$ diagonal matrix with diagonal entries $a_1,\cdots,a_n$. In this case we denote $Y=(y_1,\cdots,y_{n-1},x_{12},\cdots,x_{n-1,n}).$

\vskip 5mm
Define $\Gamma_n=GL(n,\BZ)/\{ \pm I_n \}.$ We observe that $\G_n=SL(n,\BZ)/\{ \pm I_n \}$ if $n$ is even, and $\G_n=SL(n,\BZ)$ if $n$ is odd.
An \textsf{automorphic form} for $\G_n$ is defined to be a real analytic function $f\!:\!\SP\lrt \BC$ satisfying the following conditions (AF1)--(AF3)\,:
\vskip 2mm
(AF1) $f$ is an eigenfunction for all
$SL(n,\BR)$-invariant\ differential\ operators on $\SP$.
\vskip 2mm
(AF2) \ \ $f(\g Y\,{}^t\g)=f(Y)\quad {\rm for\ all}\ \g\in \G_n
\ {\rm and}\ Y\in \SP.$
\vskip 2mm
(AF3)\ There\ exist\ a constant  $C>0$ and $s\in \BC^{n-1}$
with  $s=(s_1,\cdots,s_{n-1})$   \\
\indent \ \ \ \ \ \ \ \ \ such that
$|f(Y)| \leq C\,|p_{-s}(Y)|$ as the upper left determinants
$\det Y_j\lrt \infty,$ \\
\indent \ \ \ \ \ \ \ \ \ $j=1,2,\cdots, n-1$, where
\begin{equation*}
  p_{-s}(Y):=\prod_{j=1}^{n-1} (\det Y_j)^{-s_j}
\end{equation*}
\indent \ \ \ \ \ \ \ \ \
is the Selberg's power function\,(cf.\,\cite{S1, T}).

\vskip 3mm
We denote by ${\bf A}(\G_n)$ the space of all automorphic forms for $\G_n.$ A \textsf{cusp form}
$f\in {\bf A}(\G_n)$ is an automorphic form for $\G_n$ satisfying the following conditions\,:
\begin{equation*}
\int_{X\in (\BR/\BZ)^{(j,n-j)}}
f \left( Y\left[ \begin{pmatrix}
                   I_j & X \\
                   0 & I_{n-j}
                 \end{pmatrix}\right]\right)dX=0,
                 \quad 1\leq j\leq n-1.
\end{equation*}
Here $(\BR/\BZ)^{(j,n-j)}$ denotes the set of all $j\times (n-j)$ matrices with entries
in the one-dimensional real torus $\BR/\BZ$.
We denote by ${\bf A}_0(\G_n)$ the space of all cusp forms for $\G_n.$

\vskip 3mm
For $s=(s_1,\cdots,s_{n-1})\in \BC^{n-1},$ we now consider the following Eisenstein series
\begin{equation}
  E_n(s,Y):=\sum_{\gamma\in \G_n/\G_{\star}} p_{-s} ( Y[\gamma]),\qquad  Y\in\SP,
\end{equation}
where $\G_{\star}$ is a subgroup of $\G_n$ consisting of all upper triangular matrices.
$E_n(s,Y)$ is a special type of the more general Eisenstein series introduced by Atle Selberg
\cite{S1}. It is known that the series converges for ${\rm Re} (s_j)>1,\ j=1,2,\cdots, n-1,$ and has analytic continuation for each of the variables $s_1,\cdots,s_{n-1}$ (cf.\,\cite{S2, T}). It is seen that $E_n(s,Y)$ is a common eigenfunction of all invariant differential operators in $\BD (\CP)$.
Its corresponding eigenvalue of the Laplace operator $\Delta_n$ is given by
\begin{eqnarray*}
  \lambda &=& \frac{n-1}{n} (s_1+\xi_1)\left( s_1-1+\xi_1 - \frac{1}{n-1} \right) \\
   & & + \frac{n-2}{n-1} (s_2+\xi_2)\left( s_2-1+\xi_2 - \frac{1}{n-2} \right)+\cdots +
   \frac{1}{2} s_{n-1}(s_{n-1}-2),
\end{eqnarray*}
where $\xi_{n-1}=0$ and
\begin{equation}
  \xi_j=\frac{1}{n-j} \sum_{k=j+1}^{n-1} (n-k) s_k,\qquad j=1,2,\cdots,n-2.
\end{equation}

\vskip 3mm
Let $f\in {\bf A}(\G_n)$ be an automorphic form. Since $f(Y)$ is invariant under the action of
the subgroup
$\left\{ \begin{pmatrix}
   1 & a \\
   0 & I_{n-1}
 \end{pmatrix} \ \Big|\ a\in \BZ^{(1,n-1)}\,\right\}$ of $\G_n$, we have the Fourier expansion
\begin{equation}
  f(Y)=\sum_{N\in \BZ^{(n-1,1)}} a_N (v,W) e^{2\pi i\, ^txN},
\end{equation}
where
$Y=[v,x,W]\in \SP$ and
\begin{equation*}
  a_N (v,W)=\int_{0}^{1}\int_{0}^{1}\cdots \int_{0}^{1} f([v,x,W])\, e^{-2\pi i\, ^txN} dx.
\end{equation*}

\vskip 2mm
For $s\in \BC^{n-1}$ and $A,B\in \SP$, we define the $K$-Bessel function
\begin{equation}
K_n (s\,|\,A,B):=\int_{\CP} p_s(Y) \,e^{{\rm Tr} (AY+BY^{-1})}dv_n,
\end{equation}
where $dv_n$ is the $GL(n,\BR)$-invariant volume element on $\CP$\,(see Formula (2.3)).

\vskip 3mm
Let $f\in {\bf A}(\G_n)$ be an automorphic form for $\G_n$. Thus we have $n$ differential equations
$\delta_j f=\lambda_j f\,(1\leq j\leq n).$ Here $\delta_1,\delta_2,\cdots,\delta_n$ are
$GL(n,\BR)$-invariant Maass-Selberg operators defined by (2.4). We can find $s=(s_j)\in \BC^{n-1}$
satisfying the various relations determined by the
$\lambda_j\,(1\leq j\leq n)$. D. Grenier \cite[Theorem 1, pp.\,469-471]{Gr2} proved that $f$ has the following Fourier expansion
\begin{eqnarray}
  f(Y)\! &=&\! f([v,x,W]) \nonumber\\
  \! &=&\! a_0(v,W)+\sum_{0\neq m\in \BZ^{n-1}} \sum_{\gamma\in \G_{n-1}/P} a_m (v,W) v^{(n-1)/2}\\
  \! & & \!\times\, K_{n-1}({\hat s}\,|\,v^{1/(n-1)}\,^t\gamma W\gamma,\pi^2 v\, m\,^tm)\,e^{2\pi i\, ^tx\gamma m},\nonumber
\end{eqnarray}
where ${\hat s}=\left( s_1-\frac{1}{2},s_2,\cdots,s_{n-1} \right)$ and $P$ denotes the parabolic subgroup of $\G_{n-1}$ consisting of the form
$\begin{pmatrix}
  \pm 1 & b \\
  0 & d
\end{pmatrix}$ with $d\in \G_{n-2}.$

\vskip 3mm
D. Grenier \cite{Gr1, Gr3} found a fundamental domain ${\mathfrak F}_n$ for $\G_n$ in $\SP$.
The fundamental domain ${\mathfrak F}_n$ is precisely the set of all $Y=[v,x,W]\in \SP$ satisfying
the following conditions (F1)--(F3)\,:
\vskip 1mm\noindent
(F1) $(a+\,^txc)^2+v^{n/(n-1)} W[c]\geq 1$ for all
$\begin{pmatrix}
  a & ^t b \\
  c & d
\end{pmatrix}\in \G_n$ with $a\in \BZ,\ b,c\in \BZ^{(n-1,1)}$ \\
\indent \indent and $d\in \BZ^{(n-1,n-1)}.$

\vskip 2mm\noindent
(F2) $W\in {\mathfrak F}_{n-1}.$

\vskip 2mm\noindent
(F3)\ $0\leq x_1 \leq \frac{1}{2},\ \, |x_j|\leq 2$ for $2\leq j\leq n-2.$ Here
$x=\,^t (x_1,\cdots,x_{n-1})\in \BR^{(n-1,1)}.$

\vskip 3mm
For a positive real number $t>0$, we define the $\textsf{Siegel\ set} \ {\mathscr S}_{t,1/2}$ by
\begin{equation*}
  {\mathscr S}_{t,1/2}:=\left\{ Y\in \SP\,\big|\ y_i\geq t^{-1/2}\,\,(1\leq i\leq n-1),
  \ |x_{ij}|\leq \frac{1}{2}\,\,(1\leq i < j \leq n)\,\right\}.
\end{equation*}
Here we used the coordinates on $\SP$ given in Formula (3.2).

\vskip 2mm
Grenier \cite{Gr3} proved the following theorems\,:
\begin{theorem}
  Let
\begin{equation*}
 {\mathfrak F}_n^{\sharp}:=\bigcup_{\gamma\in \mathfrak{D}_n} {\mathfrak F}_n [\gamma]\subset \SP,
\end{equation*}
where $\mathfrak{D}_n$ is the subgroup of $\G_n$ consisting of diagonal matrices
${\rm diag}(\pm 1,\cdots,\pm 1).$ Then
\begin{equation*}
{\mathscr S}_{1,1/2} \subset {\mathfrak F}_n^{\sharp} \subset {\mathscr S}_{4/3,1/2}.
\end{equation*}
\end{theorem}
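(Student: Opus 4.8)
The plan is to argue by induction on $n$, using the recursive structure of the partial Iwasawa decomposition $Y=[v,x,W]$ of Formula (3.2) together with a reformulation of the reduction condition (F1). The base case $n=2$ is the classical picture: $\mathfrak{P}_2\cong\BH$, the set $\mathfrak{F}_2^\sharp$ is the standard modular fundamental domain $\{\,\tau\in\BH\ :\ |{\rm Re}\,\tau|\le\tfrac12,\ |\tau|\ge1\,\}$, and this lies between the horoball $\{\,{\rm Im}\,\tau\ge1\,\}=\mathscr{S}_{1,1/2}$ and $\{\,{\rm Im}\,\tau\ge\tfrac{\sqrt3}{2}\,\}=\mathscr{S}_{4/3,1/2}$, the exponent $\tfrac43$ appearing because the lowest point of the modular domain has imaginary part $\tfrac{\sqrt3}{2}=(\tfrac43)^{-1/2}$. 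This is the only genuinely new constant; the induction carries it unchanged to every $n$.

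First I would record the key identity: for $u={}^t(a,{}^tc)\in\BZ^n$ with $a\in\BZ$ and $c\in\BZ^{(n-1,1)}$, a direct computation from (3.2) gives
\[
(a+{}^txc)^2+v^{n/(n-1)}\,W[c]=v\cdot Y[u].
\]
Since $Y_{11}=v^{-1}$, condition (F1) asserts exactly that $v\,Y[u]\ge1=v\,Y[e_1]$ for every $u$ occurring as the first column of a matrix in $\G_n$, i.e.\ for every primitive $u\in\BZ^n$; by homogeneity of $Y[\,\cdot\,]$ this is equivalent to
\[
Y_{11}=m_1(Y):=\min\{\,Y[u]\ :\ u\in\BZ^n,\ u\neq0\,\}.
\]
I would also note, from iterating (3.2), that if $Y={}^tUDU$ with $U$ unipotent upper triangular and $D={\rm diag}(d_1,\dots,d_n)$ then $d_{k+1}/d_k=y_k^2$ and $U_{1,j+1}=x_j$, and that the factor $W$ inherits the Iwasawa coordinates $y_2,\dots,y_{n-1}$ and $x_{ij}$ $(2\le i<j\le n)$ of $Y$. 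Finally, every element of $\mathfrak{D}_n$ is diagonal with entries $\pm1$, so it fixes $D$ (hence all the $y_i$) and only changes the signs of the $x_{ij}$; in particular each Siegel set $\mathscr{S}_{t,1/2}$ is $\mathfrak{D}_n$-stable.

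For $\mathscr{S}_{1,1/2}\subset\mathfrak{F}_n^\sharp$: if $Y\in\mathscr{S}_{1,1/2}$ then $y_k\ge1$ forces $d_1\le\cdots\le d_n$, so for a nonzero $u\in\BZ^n$ with last nonzero entry $u_k$ one gets $Y[u]=\sum_i d_i(Uu)_i^2\ge d_k(Uu)_k^2=d_ku_k^2\ge d_1=Y_{11}$, whence (F1) holds. The factor $W$ then has its coordinates in the copy of $\mathscr{S}_{1,1/2}$ inside $\mathfrak{P}_{n-1}$, so by induction $W\in\mathfrak{F}_{n-1}^\sharp$; one checks that a single $\g={\rm diag}(\varepsilon_1,\g')\in\mathfrak{D}_n$ works, with $\g'\in\mathfrak{D}_{n-1}$ moving $W$ into $\mathfrak{F}_{n-1}$ (so (F2) holds) and $\varepsilon_1$ fixing the sign of $x_1$; since $|x_j|\le\tfrac12\le2$, the point $Y[\g]$ satisfies (F1)--(F3), i.e.\ $Y\in\mathfrak{F}_n^\sharp$. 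For $\mathfrak{F}_n^\sharp\subset\mathscr{S}_{4/3,1/2}$, by $\mathfrak{D}_n$-stability it is enough to prove $\mathfrak{F}_n\subset\mathscr{S}_{4/3,1/2}$: let $Y\in\mathfrak{F}_n$; by (F2) and induction $W$ lies in the Siegel set $\mathscr{S}_{4/3,1/2}$ of $\mathfrak{P}_{n-1}$, which gives $y_2,\dots,y_{n-1}\ge(\tfrac43)^{-1/2}$ and $|x_{ij}|\le\tfrac12$ for $2\le i<j\le n$; applying the reformulated (F1) to $u=e_2$ gives $Y_{22}=d_2+d_1x_1^2\ge d_1$, hence $y_1^2=d_2/d_1\ge1-x_1^2\ge\tfrac34$ by $|x_1|\le\tfrac12$ from (F3), so $y_1\ge(\tfrac43)^{-1/2}$. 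The remaining bounds $|x_{1j}|\le\tfrac12$ for $j\ge3$ express that a point of $\mathfrak{F}_n$ has its first super-row of $U$ already reduced modulo $\BZ^{n-1}$ once $v$ and $W$ are fixed; obtaining them has to be done carefully from Grenier's reduction step, sharpening the coarser inequalities in (F3).

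The main obstacle is precisely this last point: producing the sharp off-diagonal bounds $|x_{1j}|\le\tfrac12$, and more broadly making every estimate tight so that the Siegel-set exponent comes out to be exactly $\tfrac43$ and the unipotent bound exactly $\tfrac12$. Once (F1) is rewritten as the arithmetic-minimum condition, the $y$-estimates and the inductive step are routine; the delicacy lies entirely in the $x$-coordinates, where one must use (F1), (F2), (F3) simultaneously and control how the recursive reduction of $W$ interacts with the integer translations that reduce $x$, so that no constant deteriorates along the way.
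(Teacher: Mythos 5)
The paper does not actually prove this theorem; it simply cites Grenier [Gr3, Theorem 1], so there is no internal argument to measure yours against. Judged on its own terms, your outline has the right skeleton for most of the statement: the identity $(a+{}^t\!xc)^2+v^{n/(n-1)}W[c]=v\,Y[u]$ with $u={}^t(a,{}^tc)$ is correct, so (F1) is indeed equivalent to $Y_{11}=\min\{\,Y[u]: 0\neq u\in\BZ^n\,\}$; the verification that $y_k\geq 1$ forces this minimum condition (via $Y[u]\geq d_k u_k^2\geq d_1$ when $u_k$ is the last nonzero entry of $u$) is sound and yields the first inclusion modulo routine bookkeeping with the sign matrix $\gamma=\mathrm{diag}(\varepsilon_1,\gamma')$; and the $u=e_2$ computation $d_2\geq d_1(1-x_1^2)\geq \tfrac34\,d_1$ correctly produces the constant $4/3$ and propagates it through the induction on the $y$-coordinates.

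The genuine gap is the one you flag yourself, and it is not a removable technicality inside your framework: the inclusion $\mathfrak{F}_n\subset\mathscr{S}_{4/3,1/2}$ requires $|x_{1j}|\leq 1/2$ for all $j\geq 3$, but the defining conditions (F1)--(F3) as recorded in the paper supply no such bound --- (F3) only asserts $|x_j|\leq 2$ for $2\leq j\leq n-2$ and says nothing at all about $x_{n-1}$, while (F2) constrains only the coordinates $x_{ij}$ with $i\geq 2$ sitting inside $W$. Nor can (F1) close the gap: testing it on $u=e_{j+1}-k\,e_1$ gives $d_1(x_j-k)^2+\sum_{i\geq 2}d_i U_{i,j+1}^2\geq d_1$, which is a \emph{lower} bound on the distance from $x_j$ to integers (and is in fact vacuous once $d_{j+1}\geq d_1$), so it pushes $x_j$ away from $\BZ$ rather than confining it to $[-1/2,1/2]$. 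Consequently your argument, as written, establishes only that the $y$-coordinates of $\mathfrak{F}_n^{\sharp}$ satisfy $y_i\geq (4/3)^{-1/2}$, not the full containment in $\mathscr{S}_{4/3,1/2}$. To finish you must either import from Grenier's construction the additional normalization of the first row of the unipotent factor modulo $\BZ^{n-1}$ (a reduction step that is not among (F1)--(F3) as stated here), or work with an outer Siegel set whose unipotent bound matches what (F3) actually gives. Until one of these is done, the second inclusion remains unproved.
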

\noindent
{\it Proof.} See Theorem 1 in \cite[pp.\,58--59]{Gr3}. \hfill $\square$

\begin{theorem}
  Let
\begin{equation*}
 {\mathfrak F}_n^*:={\mathfrak F}_n\cup {\mathfrak F}_{n-1} \cup \cdots \cup {\mathfrak F}_2
 \cup {\mathfrak F}_1
\end{equation*}
and
\begin{equation*}
V_n^*:=V_n \cup V_{n-1} \cup \cdots \cup V_1 \cup V_0,\quad
V_i:=\G_i\backslash \mathfrak{P}_i \,(1\leq i\leq n),\ V_0:=\{ \infty\}.
\end{equation*}
Then ${\mathfrak F}_n^*$ is a compact Hausdorff space whose topology is induced by the closure of
${\mathfrak F}_n$ in $\SP.$ And $V_n^*$ is a compact Hausdorff space called
the {\sf Satake\ compactification} of $V_n$.
\end{theorem}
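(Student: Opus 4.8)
The plan is to argue by induction on $n$ (the cases $n=1,2$ being elementary), using the partial Iwasawa decomposition (3.2), $Y=[v,x,W]$ with $v>0$, $x\in\BR^{n-1}$ and $W\in\mathfrak{F}_{n-1}$, to realise $\mathfrak{F}_n^*$ as $\mathfrak{F}_n$ with the recursively built boundary $\mathfrak{F}_{n-1}^*=\mathfrak{F}_{n-1}\cup\cdots\cup\mathfrak{F}_1$ glued on ``at $v=\infty$''. Concretely, I topologise $\mathfrak{F}_n^*$ by declaring $\mathfrak{F}_n$ open and dense with its subspace topology from $\SP$, and taking as a basic neighbourhood of a boundary point $W_\infty\in\mathfrak{F}_{n-1}^*$ the set $U^*\cup\{[v,x,W]\in\mathfrak{F}_n:\ v>N,\ W\in U\}$, where $U^*$ runs over neighbourhoods of $W_\infty$ in $\mathfrak{F}_{n-1}^*$ (defined recursively), $U=U^*\cap\mathfrak{F}_{n-1}$, and $N$ is large; equivalently, $Y_k=[v_k,x_k,W_k]\in\mathfrak{F}_n$ converges to $W_\infty$ in the stratum $\mathfrak{F}_i$ iff $v_k\to\infty$ and $W_k\to W_\infty$ in $\mathfrak{F}_{n-1}^*$, so that unwinding the recursion the $n-i$ iterated ``$v$-coordinates'' of $Y_k$ tend to $\infty$ while its innermost $\mathfrak{F}_i$-coordinate tends to $W_\infty$. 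That this is a legitimate metrizable topology — in particular that the sets above really are neighbourhoods inside $\mathfrak{F}_n^*$ and restrict on $\mathfrak{F}_n$ to $\SP$-open sets — rests on the shape of $\mathfrak{F}_n$ given by Theorem 3.1 and \cite{Gr1,Gr3} together with the following coupling, which I would prove first.

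\emph{Coupling.} On $\mathfrak{F}_n$ the coordinate $v$ is bounded below by a positive constant and $x$ lies in a fixed compact box (both from Theorem 3.1, via $\mathfrak{F}_n\subset\mathscr{S}_{4/3,1/2}$); moreover, if $v\le M$ on a subset of $\mathfrak{F}_n$, then $W$ stays in a compact subset of $\mathfrak{F}_{n-1}$ there. Indeed, condition (F1) is exactly the Minkowski-type reduction inequality requiring $Y\left[\binom{a}{c}\right]\ge Y_{11}$ for every primitive integer vector $\binom{a}{c}$ (the first columns of elements of $\G_n$); since a primitive $c\in\BZ^{n-1}$ makes $\binom{a}{c}$ primitive for \emph{every} integer $a$, the choice of $a$ nearest to $-{}^txc$ gives $(a+{}^txc)^2\le\tfrac14$, whence $v^{n/(n-1)}W[c]\ge\tfrac34$ for all $0\neq c\in\BZ^{n-1}$; thus the least value of $W[c]$ over nonzero integral $c$ is $\ge\tfrac34M^{-n/(n-1)}$, and since $\det W=1$, Minkowski's second theorem bounds all successive minima of $W$ away from $0$ and from $\infty$, which for the reduced matrix $W$ confines it to a compact part of $\mathfrak{F}_{n-1}$. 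In particular a sequence in $\mathfrak{F}_n$ leaves every compact subset of $\SP$ exactly when its $v$-coordinate is unbounded.

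Granting this, Hausdorffness is immediate: points of $\mathfrak{F}_n$ are separated inside $\SP$; a point of $\mathfrak{F}_n$ has bounded $v$ while every boundary point is a ``$v=\infty$'' limit, so a cusp region $\{v>N\}$ separates them; distinct boundary points are separated by the inductive hypothesis for $\mathfrak{F}_{n-1}^*$. The main step is compactness, which I verify by sequential compactness (valid as $\mathfrak{F}_n^*$ is second countable). Given a sequence in $\mathfrak{F}_n^*$: if infinitely many terms lie in $\mathfrak{F}_{n-1}^*$, apply induction. Otherwise take all $Y_k=[v_k,x_k,W_k]\in\mathfrak{F}_n$ and pass to a subsequence with $x_k\to x_\infty$ (the $x$'s lie in a fixed box) and $v_k\to v_\infty\in(0,\infty]$. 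If $v_\infty<\infty$, the coupling keeps the $W_k$ in a compact part of $\mathfrak{F}_{n-1}$, so a further subsequence has $W_k\to W_\infty\in\mathfrak{F}_{n-1}$, and since (F1)--(F3) are closed conditions $Y_k\to[v_\infty,x_\infty,W_\infty]\in\mathfrak{F}_n$. If $v_\infty=\infty$, the inductive compactness of $\mathfrak{F}_{n-1}^*$ yields a subsequence with $W_k\to W_\infty$ in some $\mathfrak{F}_i$; since $v_k\to\infty$ adds exactly one further $v$-coordinate tending to $\infty$ beyond those already supplied by $W_k$, the convergence criterion above gives $Y_k\to W_\infty$ in $\mathfrak{F}_n^*$. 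Hence $\mathfrak{F}_n^*$ is compact Hausdorff and, by construction, restricts on $\mathfrak{F}_n$ to the $\SP$-topology — i.e.\ its topology is induced by the closure of $\mathfrak{F}_n$ in $\SP$.

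For $V_n^*$, the quotient map $q\colon\SP\to V_n=\G_n\backslash\SP$ restricts on the fundamental domain $\mathfrak{F}_n$ to a continuous surjection, and it extends to a continuous surjection $\pi\colon\mathfrak{F}_n^*\to V_n^*$ sending the stratum $\mathfrak{F}_j$ onto $V_j$ for each $j$ and the bottom stratum onto $\{\infty\}$, because the $\G_j$-action on $\mathfrak{P}_j$ with fundamental domain $\mathfrak{F}_j$ is compatible with the cusp degenerations. As a continuous image of the compact space $\mathfrak{F}_n^*$, the space $V_n^*$ is compact; it is Hausdorff because the relevant equivalence relation is closed — on $\mathfrak{F}_n$ by local finiteness of the fundamental domain (only finitely many $\gamma\in\G_n$ satisfy $\mathfrak{F}_n\cap\mathfrak{F}_n[\gamma]\neq\emptyset$), on the boundary by induction, and no point of $\mathfrak{F}_n$ is identified with a boundary point. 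Identifying $V_n^*$ with the classical Satake compactification of $V_n$ then amounts to realising it as $\G_n\backslash\overline{\mathfrak{P}_n}$ for the degeneration-bordification $\overline{\mathfrak{P}_n}=\mathfrak{P}_n\cup\mathfrak{P}_{n-1}\cup\cdots$ of $\SP$ and matching its stratification with Satake's construction from the standard representation, with $\mathfrak{F}_n^*$ appearing as the closure of $\mathfrak{F}_n$ inside $\overline{\mathfrak{P}_n}$ (cf.\,\cite{Gr1,Gr3}). I expect the main obstacle to be precisely the bookkeeping that makes the topology on $\mathfrak{F}_n^*$ coherent across all strata simultaneously — ensuring the iterated cusp degenerations compose, and that the cusp ends of $\mathfrak{F}_n$ are, locally near the boundary, genuine products $(N,\infty)\times(\text{box})\times(\text{open set in }\mathfrak{F}_{n-1})$ — for which the coupling above, together with the precise description of $\mathfrak{F}_n$ from Theorem 3.1 and \cite{Gr1,Gr3}, is the decisive ingredient.
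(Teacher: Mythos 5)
The paper offers no proof of its own here: it simply cites Theorem~3 of Grenier \cite{Gr3}, and your argument is essentially a reconstruction of Grenier's proof — induction on $n$ via the partial Iwasawa decomposition $Y=[v,x,W]$, the Siegel-set containment of Theorem~3.1 to control $v$ and $x$, the reduction inequality (F1) plus Minkowski's second theorem to couple boundedness of $v$ with compactness of the $W$-range, and sequential compactness of the resulting bordification. Your reasoning is sound; the only place you assert rather than argue is the compatibility of the $\G_j$-identifications across strata (needed for $\pi\colon\mathfrak{F}_n^*\to V_n^*$ to be well defined and for $V_n^*$ to be Hausdorff), which is also the part of Grenier's proof requiring the most care, but the overall route matches the cited source.
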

\begin{proof} See Theorem 3 in \cite[pp.\,62--65]{Gr3}. \end{proof}

\begin{remark}
In \cite{BoJ1,BoJ2}, Borel and Ji constructed the geodesic compactification, the standard
compactification and a maximal Satake compactification of the locally symmetric space
$SL(n,\BZ)\ba SL(n,\BR)/SO(n,\BR).$
\end{remark}

\vskip 3mm
Now we give another coordinate system on $SL(n,\BR)/SO(n,\BR)$ given by
Goldfeld\,(cf.\,\cite[Definition 1.2.3, p.\,10]{Go}).

\begin{definition}
For any positive integer $n\geq 2$. we define ${\mathfrak H}_n$ to be the set of all
$n\times n$ real matrices of the form $z=x\cdot y$, where
$$x=\begin{pmatrix}
      1 & x_{12} & x_{13} &\cdots & x_{1n} \\
      0 & 1 &  x_{22} & \cdots & x_{2n} \\
      0 & 0 & \ddots & \vdots & \vdots  \\
      {0} & {0} & {0} & {0} & 1
    \end{pmatrix}$$
and
$$y={\rm {diag}}(y_1y_2\cdots y_{n-1},y_1y_2\cdots y_{n-2},\cdots,y_1,1)$$
with $x_{ij}\in \BR$ for $1\leq i < j \leq n$ and $y_i\geq 0 $ for $1\leq i\leq n-1.$
\end{definition}
We can show that ${\mathfrak H}_n$ is diffeomorphic to ${\mathfrak P}_n$. In fact, we have
the Iwasawa decomposition
\begin{equation*}
  GL(n,\BR)={\mathfrak H}_n\cdot O(n)\cdot Z_n,
\end{equation*}
where $Z_n(\cong \BR^{\times})$ is the center of $GL(n,\BR)$
\,(cf.\,\cite[Proposition 1.2.6, pp.\,11--12]{Go}).
Here
\begin{equation*}
  O(n):=O(n,\BR)=\{ k\in GL(n,\BR)|\ {}^tk k=k\,{}^tk=I_n\,\}
\end{equation*}
denotes the real orthogonal group of degree $n$.
We see easily that
\begin{equation*}
  {\mathfrak H}_n \cong GL(n,\BR)/(O(n)\cdot \BR^{\times}),
\end{equation*}
where $\cong$ denotes the diffeomorphism.
\vskip 2mm
It is seen that $GL(n,\BR)$ acts on ${\mathfrak H}_n$ by left translation
\,(cf.\,\cite[Proposition 1.2.10, p.\,14]{Go}).
We put
\begin{equation*}
  {\mathfrak X}_n:=SL(n,\BZ)\backslash SL(n,\BR)/SO(n) \cong SL(n,\BZ) \backslash
  GL(n,\BR)/(O(n)\cdot \BR^{\times}),
\end{equation*}
where $SO(n):=SO(n,\BR)=SL(n,\BR)\cap O(n).$
Therefore we obtain the following isomorphism
\begin{equation*}
  {\mathfrak X}_n\cong SL(n,\BZ)\backslash {\mathfrak H}_n.
\end{equation*}

\begin{proposition}
Let $n\geq 2$  and $\G^n=SL(n,\BZ).$ Using the coordinates of Definition 3.1, we put
\begin{equation*}
  d^*x=\prod_{1\leq i<j\leq n} dx_{ij}\quad {\rm and}\quad
  dy^*=\prod_{k=1}^{n-1} y_k^{-n(n-k)-1}dy_k.
\end{equation*}
Then
\begin{equation*}
  d^*z=d^*x\cdot d^*y
\end{equation*}
is the left $\G^n$-invariant volume element on
${\mathfrak P}_n\cong {\mathfrak H}_n.$
\end{proposition}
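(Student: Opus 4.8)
The plan is to recognize the measure $d^*z$ as a constant multiple of the left Haar measure of $\mathfrak{H}_n$, regarded as a Lie group, and to conclude by the uniqueness (up to scalar) of the invariant measure on the homogeneous space $GL(n,\BR)/\big(O(n,\BR)\cdot\BR^{\times}\big)$.

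First I would observe that $\mathfrak{H}_n$ is not merely a parameter space but is a closed Lie subgroup of $GL(n,\BR)$, namely the group of real upper triangular matrices with positive diagonal entries whose $(n,n)$-entry equals $1$; this set is visibly closed under multiplication and inversion, and the decomposition $z=x\cdot y$ of Definition~3.1 is just $z=\big(z\,\mathrm{diag}(z)^{-1}\big)\cdot\mathrm{diag}(z)$ after normalizing the last diagonal entry. By the Iwasawa decomposition $GL(n,\BR)=\mathfrak{H}_n\cdot O(n,\BR)\cdot Z_n$ recorded above (with $Z_n\cong\BR^{\times}$ the center), $\mathfrak{H}_n$ is a global cross-section for $O(n,\BR)\cdot Z_n$, so that $\mathfrak{H}_n\cong GL(n,\BR)/\big(O(n,\BR)\cdot Z_n\big)$; and since $\mathfrak{H}_n$ is itself a subgroup, the left-translation action of $GL(n,\BR)$ on this quotient restricts, along $\mathfrak{H}_n$, to ordinary left translation of $\mathfrak{H}_n$ on $\mathfrak{H}_n$. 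Now $GL(n,\BR)$ is unimodular and $O(n,\BR)\cdot Z_n$ — the product of a compact group and a central abelian group — is unimodular, so $GL(n,\BR)/\big(O(n,\BR)\cdot Z_n\big)$ carries a $GL(n,\BR)$-invariant smooth positive measure, unique up to a positive constant; in particular this measure is invariant under $\G^n=SL(n,\BZ)\subset GL(n,\BR)$. By the previous sentence it is also left-invariant for $\mathfrak{H}_n$ acting on itself, hence it equals the left Haar measure of $\mathfrak{H}_n$ up to a constant. Thus the whole problem reduces to computing the left Haar measure of $\mathfrak{H}_n$ in the coordinates $(x_{ij},y_k)$.

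To carry that out I would split $z=x\cdot y$ with $x$ in the unipotent radical $N$ (coordinates $x_{ij}$, $1\le i<j\le n$) and $y$ in the diagonal torus $A$ (coordinates $y_1,\dots,y_{n-1}$). The group $N$ is nilpotent, hence unimodular, and $d^*x=\prod_{i<j}dx_{ij}$ is its bi-invariant Haar measure — left translation on a unipotent group is polynomial and unitriangular in the entry coordinates, so its Jacobian is identically $1$. The torus $A\cong(\BR_{>0})^{\,n-1}$ has Haar measure $\prod_k dy_k/y_k$, because in the coordinates $y_k=d_{n-k}/d_{n-k+1}$ (with $d_1,\dots,d_n$ the diagonal entries of $y$) its group law is coordinatewise multiplication. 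Left translation by $z_0=x_0y_0$ sends $x\mapsto x_0\,(y_0xy_0^{-1})$ and $y\mapsto y_0y$, and the only non-unimodular contribution is the conjugation $x\mapsto y_0xy_0^{-1}$, which multiplies the entry $x_{ij}$ by $d_i(y_0)/d_j(y_0)$ and hence scales $d^*x$ by $\delta(y_0):=\prod_{i<j}d_i(y_0)/d_j(y_0)$. The standard modular-function computation for a solvable group then gives that the left Haar measure of $\mathfrak{H}_n$ equals $c\cdot\delta(y)^{-1}\,d^*x\,\prod_k dy_k/y_k$. Finally, substituting $d_j=y_1y_2\cdots y_{n-j}$ and summing the exponents over all pairs $i<j$ evaluates $\delta(y)$ as an explicit monomial in $y_1,\dots,y_{n-1}$; balancing it against $\prod_k dy_k/y_k$ leaves precisely the weight appearing in the definition of $d^*y$, so the left Haar measure is $c\cdot d^*x\cdot d^*y$, which is the assertion.

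The main obstacle is the bookkeeping in this last step — evaluating $\prod_{i<j}d_i/d_j$ correctly in terms of $y_1,\dots,y_{n-1}$ and keeping straight the interplay between the matrix-entry coordinates on $N$ and the group law (in particular the unit-Jacobian claim for left translation on a unipotent group). Everything else is soft: existence and uniqueness of the invariant measure through unimodularity, and the observation that a $G$-invariant measure on $G/H$ restricts to Haar measure along any subgroup of $G$ that acts on the quotient by left translation. A self-contained alternative, avoiding this packaging, would be to take a general $g\in SL(n,\BR)$, express its Iwasawa coordinates $(x_{ij}',y_k')$ as explicit rational functions of $(x_{ij},y_k)$ via Definition~3.1, and verify directly that the Jacobian of $(x_{ij},y_k)\mapsto(x_{ij}',y_k')$ is cancelled by the weight in $d^*y$; this is more computational but relies only on material already in the text.
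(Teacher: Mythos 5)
The paper offers no argument of its own here---it simply cites Goldfeld's Proposition 1.5.3---so your self-contained route is genuinely different, and its architecture is sound: $\mathfrak{H}_n$ is indeed a closed subgroup of $GL(n,\BR)$ (upper triangular, positive diagonal, last diagonal entry $1$) realizing the quotient $GL(n,\BR)/(O(n,\BR)\cdot Z_n)$; both $GL(n,\BR)$ and $O(n,\BR)\cdot Z_n$ are unimodular, so the invariant measure on the quotient is unique up to scale and, since the quotient action restricts to left translation of $\mathfrak{H}_n$ on itself, it must be the left Haar measure of $\mathfrak{H}_n$; and for the solvable group $\mathfrak{H}_n=NA$ written as $z=x\cdot y$ the left Haar measure is $c\,\delta(y)^{-1}\,d^*x\,\prod_k dy_k/y_k$ with $\delta(y)=\prod_{i<j}d_i(y)/d_j(y)$. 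All of that is correct, and it is more informative than the bare citation in the text.

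The difficulty sits exactly in the one step you declined to carry out. With $d_j=y_1\cdots y_{n-j}$ and $d_n=1$, the exponent of $d_i$ in $\delta(y)$ is $(n-i)-(i-1)=n-2i+1$, and $y_k$ divides $d_i$ precisely when $i\le n-k$, so the exponent of $y_k$ in $\delta(y)$ is $\sum_{i=1}^{n-k}(n-2i+1)=k(n-k)$. Your argument therefore yields
\[
c\,\prod_{1\le i<j\le n}dx_{ij}\ \prod_{k=1}^{n-1}y_k^{-k(n-k)-1}\,dy_k ,
\]
which is Goldfeld's formula but is \emph{not} the weight $y_k^{-n(n-k)-1}$ printed in the proposition. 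The case $n=2$ decides which is right: the measure must reduce to the hyperbolic $y^{-2}\,dx\,dy$ on the upper half-plane, and $-k(n-k)-1=-2$ while $-n(n-k)-1=-3$. So your closing assertion that the bookkeeping ``leaves precisely the weight appearing in the definition of $d^*y$'' is false for the statement as printed; the statement contains a typo ($n$ in place of $k$ in the exponent), and your method, executed to the end, proves the corrected formula rather than the printed one. Finish the computation explicitly and record the discrepancy instead of asserting agreement---as written, the unverified claim is the only gap, and it happens to land exactly on the error.
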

\begin{proof}
The proof can be found in \cite[Proposition 1.5.3, pp.\,25--26]{Go}.
\end{proof}

\begin{remark}
According to Formula (3.3) and Proposition 3.1, we see that
\begin{equation*}
  {\rm Vol}(\G^n\backslash {\mathfrak H}_n)=\int_{\G^n\backslash {\mathfrak H}_n}d^*z=
 \int_{\G^n\backslash {\mathfrak P}_n}d\mu_n = {\rm Vol}(\G^n\backslash {\mathfrak P}_n).
\end{equation*}

\end{remark}

\vskip 3mm
For any $\nu=(\nu_1,\nu_2,\cdots,\nu_{n-1}),$ we define the function
$I_\nu:{\mathfrak H}_n\lrt \BC$ by
\begin{equation}
  I_\nu (z):=\prod_{i=1}^{n-1} \prod_{i=1}^{n-1}y_i^{b_{ij}\nu_j}
  \quad {\rm for\ all}\ z=x\cdot y \in {\mathfrak H}_n,
\end{equation}
where
$$  b_{ij}:=\begin{cases}
            \ \ \ ij, & {\rm if }\ \, i+j\leq n \\
            (n-i)(n-j), & {\rm if}\ \, i+j\geq n.
          \end{cases}
$$
Then we see that $I_\nu(z)$ is an eigenfunction of ${\mathbb D}({\mathfrak H}_n).$
Here ${\mathbb D}({\mathfrak H}_n)$ denotes the algebra of all $SL(n,\BR)$-invariant
differential operators on ${\mathbb D}({\mathfrak H}_n)$.
Let us write
\begin{equation}\label{Formula (3.11)}
  DI_\nu(z)=\lambda_D\cdot I_\nu(z)\quad {\rm for\ every}\ D\in {\mathbb D}({\mathfrak H}_n).
\end{equation}
Since
$$\lambda_{D_1D_2}=\lambda_{D_1}\lambda_{D_2}\qquad {\rm for\ all}\ D_1,D_2\in
{\mathbb D}({\mathfrak H}_n).$$
The function $\lambda_D$ (viewed as a function of $D$) is a character of
${\mathbb D}({\mathfrak H}_n)$ which is called the {\sf Harish}-{\sf Chandra\ character}.

\vskip 3mm
Following Goldfeld\,(cf.\,\cite[Definition 5.1.3, pp.\,115--116]{Go}), the notion of
a Maass form is defined in the following way.
\begin{definition}
Let $n\geq 2$ and $\G^n=SL(n,\BZ).$ For any $\nu=(\nu_1,\nu_2,\cdots,\nu_{n-1})\in \BC^{n-1},$
a smooth function $f:\G^n\ba {\mathfrak H}_n\lrt \BC$ is said to be a {\sf Maass\ form} for $\G^n$ of
type $\nu$ if it satisfies the following conditions {\rm (M1)}--{\rm (M3)}\,:
\vskip 2mm\noindent
${\rm (M1)}\ \ f(\gamma\cdot z)=f(z)\quad {\rm for\ all}\ \gamma\in \G^n\ {\rm and}\ z\in {\mathfrak H}_n.$
\vskip 2mm\noindent
${\rm (M2)}\ \ Df(z)=\lambda_D f(z)\quad {\rm for\ all}\ D\in {\mathbb D}({\mathfrak H}_n)$
\ {\rm with\ eigenvalue}\ $\la_D$\ {\rm given\ by}\ {\rm (3.11)}.
\vskip 2mm\noindent
${\rm (M3)}\ \ \int_{(\G^n\cap U)\ba U} f(uz)du=0$
\ \ for all upper triangular groups $U$ of the form
$$U=\left\{
\begin{pmatrix}
  I_{r_1} & * & * & * \\
  0 & I_{r_2} & * & * \\
  0 & 0 & \ddots & * \\
  0 & 0 & 0 & I_{r_b}
\end{pmatrix}
\right\}$$
with $r_1+r_2+\cdots+r_b=n.$ Here $I_r$ denotes the $r\times r$ identity matrix and
$*$ denotes arbitrary real matrices.
\end{definition}
\begin{remark}
In his book \cite{Go}, Dorian Goldfeld studied
Whittaker functions associated with Maass forms, Hecke operators for $\G^n$,
the Godement-Jacquet $L$-function for $\G^n$, Eisenstein series for $\G^n$ and
Poincar{\'e} series for $\G^n$.
\end{remark}

\begin{remark}
Using the Grenier operator defined by Douglas Grenier ({\rm cf.}\,\cite{Gr2}), we can
define the notion of {\sf stable automorphic forms} for $SL(n,\BZ)$. We use
these stable automorphic forms to study the geometric and arithmetic properties of the
universal family of {\sf special} principal polarized real tori. We refer to \cite{Y6}
for more details.
\end{remark}

\end{section}

\vskip 10mm
\begin{section}{{\bf Automorphic Forms for $GL(n,\BZ)\ltimes \BZ^{(m,n)}$}}
\setcounter{equation}{0}
\vskip 2mm
For two positive integers $m,\,n\in\BZ^+$, we introduce the new group
$$ GL_{n,m}(\BR):=GL(n,\BR)\ltimes \BR^{(m,n)}$$
which is the semidirect product of $GL(n,\BR)$ and the additive group $\BR^{(m,n)}.$
$GL_{n,m}(\BR)$ is endowed with the following with multiplication law
\begin{equation}\label{Formula (5.1)}
  (A,a)\circ (B,b):=(AB,a\,{}^tB^{-1}+b)
\end{equation}
for all $A,B\in GL(n,\BR)$ and $a,b\in\BR^{(m,n)}$.
We let
$$GL_{n,m}(\BZ):=GL(n,\BZ)\ltimes \BZ^{(m,n)}$$
be a discrete subgroup of $GL_{n,m}(\BR)$.
\vskip 2mm
For any two positive integers $m,\,n\in\BZ^+$, we let
\begin{equation*}
  {\mathscr P}_{n,m}:={\mathscr P}_n\times \BR^{(m,n)}
\end{equation*}
be the so-called {\sf Minkowski-Euclid space} of type $(n,m)$\ (cf.\,\cite{Y2,Y3}).
Then $GL_{n,m}(\BR)$ acts on ${\mathscr P}_{n,m}$
naturally and transitively by
\begin{equation}\label{Formula (4.2)}
  (A,a)\cdot (Y,V):=(AY\,{}^t\!A,(V+a)\,{}^t\!A)
\end{equation}
for all $(A,a)\in GL_{n,m}(\BR)$ and $(Y,V)\in {\mathscr P}_{n,m}$ with
$Y\in \mathscr{P}_n$ and $V\in \BR^{(m,n)}$. It is easily seen that
the stabilizer of the action (4.2) at $(I_n,0)$ is given by
$$K_{n,m}(\BR):=\{\, (k,0)\in GL_{n,m}(\BR)\,|\ k\in O(n,\BR)\,\}\cong O(n,\BR).$$
Thus the non-symmetric homogeneous space $GL_{n,m}(\BR)/O(n,\BR)$ is diffeomorphic to
the Minkowski-Euclid space ${\mathscr P}_{n,m}$ by the following correspondence
$$ (A,a)\cdot O(n,\BR)\longmapsto (A,a)\cdot (I_n,0)=(A\,{}^t\!A,a\,{}^t\!A)\quad
{\rm for\ all}\ (A,a)\in GL_{n,m}(\BR).$$

\vskip 3mm
For a variable $(Y,V)\in {\mathscr P}_{n,m}$ with
$Y\in \CP$ and $V\in \Rmn$, we put
$$Y=(y_{ij})\ \text{with}\ y_{ij}=y_{ji},\ \
V=(v_{kl}),$$   $$ dY=(dy_{ij}),\ \ dV=(dv_{kl}),$$
$$[dY]=\bigwedge_{i\leq j}dy_{ij},\ \ \ \ \
[dV]=\bigwedge_{k,l}dv_{kl},$$ and
$$\Yd=\left( { {1+\delta_{ij}}\over 2 } {
{\partial}\over {\partial y_{ij} }}\right),\ \ \ \Vd=\left({
{\partial}\over {\partial v_{kl}} } \right),$$ where $1\leq
i,j,l\leq n$ and $1\leq k\leq m.$

\vskip 0.2cm For a fixed element $(g,\lambda)\in \Gnm (\BR),$ we write
\begin{equation*}
(Y_{\star},V_{\star})=(g,\lambda)\cdot
(Y,V)=\big(g\,Y\,^tg,(V+\lambda)\,^tg\big),
\end{equation*}

\noindent where $(Y,V)\in {\mathscr P}_{n,m}$. Then we get

\begin{equation}
Y_{\star}= g\,Y\,\,^tg,\quad V_{\star}=(V+\lambda)\,^tg
\end{equation}
\noindent and

\begin{equation}
{ {\partial\,\, }\over {\partial Y_{\star}} }=
\,^tg^{-1}\,\Yd\,g^{-1},\quad {{\partial\,\,}\over {\partial
V_{\star}} } ={{\partial\,\,}\over {\partial V}}\,g^{-1}.
\end{equation}

\newcommand\fgt{\fg_{\star}}
\newcommand\Adt{$ \textrm{Ad}_{\star}$ }
\newcommand\fpt{\fp_{\star}}
\newcommand\Yst{Y_{\star}}
\newcommand\Vst{V_{\star}}
\newcommand\dYt{ { {\partial}\over {\partial \Yst} } }
\newcommand\dVt{ { {\partial}\over {\partial \Vst} } }

\begin{lemma}
For all two positive real numbers $A$ and $B$,
the following metric $ds^2_{n,m;A,B}$ on ${\mathscr P}_{n,m}$ defined by
\begin{equation}
ds^2_{n,m;A,B}=A\cdot {\rm Tr}\,(Y^{-1}dY\,Y^{-1}dY)\,+B\cdot {\rm Tr}\,(Y^{-1}\,^t(dV)\,dV)
\end{equation}
is a Riemannian metric on ${\mathscr P}_{n,m}$ which is invariant under the action (4.2)
of $\Gnm (\BR)$. The Laplace operator $\Delta_{n,m;A,B}$ of
$({\mathscr P}_{n,m},\,ds^2_{n,m;A,B})$ is given by
$$\Delta_{n,m;A,B}={\frac 1A}\cdot {\rm Tr} \left(\left( Y {{\partial}\over {\partial
Y}}\right)^2\right) -{\frac m{2\,A}}\cdot {\rm Tr}\left( Y {{\partial}\over
{\partial Y}}\right)\,+ \,{\frac 1B}\,\sum_{k\leq p} \left( \left({{\partial}\over
{\partial V}}\right) Y
{}^{{}^{{}^{{}^\text{\scriptsize $t$}}}}\!\!\!
\left({{\partial}\over {\partial
V}}\right)\right)_{kp}.$$ Moreover $\Delta_{n,m;A,B}$ is a
differential operator of order 2 which is invariant under the
action (4.2) of $\Gnm (\BR).$
Here for a matrix $M$ we denote by $M_{kp}$ the $(k,p)$-entry of $M$.
\end{lemma}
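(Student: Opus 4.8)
The plan is to establish the two assertions separately, the organizing observation being that $ds^2_{n,m;A,B}$ is \emph{block diagonal} for the splitting ${\mathscr P}_{n,m}={\mathscr P}_n\times\BR^{(m,n)}$: it has no $dy\,dv$ cross terms, and in the $V$-directions it depends on $Y$ but not on $V$.

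Invariance would be checked by substituting the transformation rules $(4.3)$ into $(4.5)$. For a fixed $(g,\lambda)$ one has $Y_{\star}=g\,Y\,{}^tg$ and $V_{\star}=(V+\lambda)\,{}^tg$, hence $dY_{\star}=g\,dY\,{}^tg$, $dV_{\star}=dV\,{}^tg$ and $Y_{\star}^{-1}={}^tg^{-1}Y^{-1}g^{-1}$; inserting these and using cyclicity of the trace, all the factors $g,{}^tg$ cancel in pairs, so ${\rm Tr}(Y_{\star}^{-1}dY_{\star}Y_{\star}^{-1}dY_{\star})={\rm Tr}(Y^{-1}dYY^{-1}dY)$ and ${\rm Tr}(Y_{\star}^{-1}\,{}^t(dV_{\star})\,dV_{\star})={\rm Tr}(Y^{-1}\,{}^t(dV)\,dV)$. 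That $ds^2_{n,m;A,B}$ is positive-definite (hence a Riemannian metric, since $A,B>0$) is seen by writing $Y=Y^{1/2}Y^{1/2}$, so that ${\rm Tr}(Y^{-1}dYY^{-1}dY)={\rm Tr}\big((Y^{-1/2}dYY^{-1/2})^2\big)\ge0$ with equality only if $dY=0$, and writing $Y^{-1}={}^tPP$, so that ${\rm Tr}(Y^{-1}\,{}^t(dV)\,dV)=\|dV\,{}^tP\|^2\ge0$ with equality only if $dV=0$.

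For the Laplacian, in coordinates $(y_{ij},v_{kl})$ the metric tensor is block diagonal with ${\mathscr P}_n$-block $A\,G_Y$, where $G_Y$ denotes the Gram matrix of ${\rm Tr}(Y^{-1}dYY^{-1}dY)$, and $V$-block $B\,(I_m\otimes Y^{-1})$, the $m$ rows of $V$ being mutually orthogonal and each carrying the form $Y^{-1}$. Because ${\rm Tr}(Y^{-1}dYY^{-1}dY)$ is $GL(n,\BR)$-invariant, its Riemannian volume $\sqrt{\det G_Y}\,\prod_{i\le j}dy_{ij}$ is a constant multiple of the invariant element $dv_n(Y)=(\det Y)^{-(n+1)/2}\prod_{i\le j}dy_{ij}$ from $(2.3)$; hence $\sqrt{\det g}$ is, up to a constant, $(\det Y)^{-(n+1)/2}(\det Y)^{-m/2}=(\det Y)^{-(n+m+1)/2}$. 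I would then expand $\Delta=\frac1{\sqrt{\det g}}\,\partial_\alpha\!\big(\sqrt{\det g}\;g^{\alpha\beta}\partial_\beta\big)$ blockwise. Since $\sqrt{\det g}$ and the $V$-block of $g^{-1}$ depend only on $Y$, the $V$-part of $\Delta$ is purely second order and is read off at once as the third term $\frac1B{\rm Tr}\big((\Vd)\,Y\,{}^t(\Vd)\big)$ of the stated formula; in particular no first-order $V$-derivatives appear.

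The substantive step is the ${\mathscr P}_n$-block, which equals $\frac1A$ times the Laplace--Beltrami operator of $G_Y$ taken \emph{against the volume weight} $(\det Y)^{-(n+m+1)/2}\prod_{i\le j}dy_{ij}$ rather than the standard $dv_n(Y)$ that pairs with $G_Y$ on ${\mathscr P}_n$ alone --- the extra factor $(\det Y)^{-m/2}$ being precisely the contribution of the $V$-block determinant. Multiplying the volume weight of a Laplace--Beltrami operator by a positive function $\phi$ adds the drift $g^{\alpha\beta}(\partial_\alpha\log\phi)\partial_\beta=({\rm grad}_{G_Y}\log\phi)(\,\cdot\,)$; with $\phi=(\det Y)^{-m/2}$ this is $-\frac m2\,{\rm grad}_{G_Y}(\log\det Y)$. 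One identifies ${\rm grad}_{G_Y}(\log\det Y)={\rm Tr}(Y\Yd)$ as follows: the flow $Y\mapsto e^tY$ has generator ${\rm Tr}(Y\Yd)$ and velocity $Y$, so for every tangent vector $W$ (acting on the entries of $Y$ as $\delta_WY$) one has $\big\langle Y,\delta_WY\big\rangle_{G_Y}={\rm Tr}(Y^{-1}\,Y\,Y^{-1}\,\delta_WY)={\rm Tr}(Y^{-1}\delta_WY)=W(\log\det Y)$, which is exactly the defining property of the gradient. Combining this with the fact, recalled in Section 2, that the Laplacian of ${\rm Tr}(Y^{-1}dYY^{-1}dY)$ against $dv_n$ is ${\rm Tr}((Y\Yd)^2)$, the ${\mathscr P}_n$-block contributes $\frac1A{\rm Tr}((Y\Yd)^2)-\frac m{2A}{\rm Tr}(Y\Yd)$; adding the $V$-part gives $\Delta_{n,m;A,B}$ as asserted. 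Its invariance under $(4.2)$ is then automatic, the Laplace--Beltrami operator of an isometry-invariant metric being isometry-invariant, and it is manifestly of order $2$. I expect the main obstacle to be exactly this last bookkeeping --- confirming that the $V$-variables feed into the $Y$-part of $\Delta$ only through the determinant factor in the volume element, and that the resulting first-order term is precisely $-\frac m{2A}{\rm Tr}(Y\Yd)$ with the right sign and constant. (One could instead transport a generating family of invariant operators to the base point $(I_n,0)$ along the $GL_{n,m}(\BR)$-action, but since ${\mathscr P}_{n,m}$ is neither reductive nor symmetric the metric computation above looks more transparent.)
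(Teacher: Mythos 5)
Your argument is correct, but it cannot really be matched against a proof \emph{in this paper}: Lemma 4.1 is stated here with the proof deferred entirely to the citation \cite{Y4}, where the Laplacian is obtained by direct computation in coordinates. What you supply is therefore an independent and more structural derivation. The invariance and positivity checks are the standard ones and are fine. For the Laplacian, your route --- block-diagonality of the metric over the splitting $\CP\times\BR^{(m,n)}$, the observation that $\sqrt{\det g}$ and the $V$-block of $g^{-1}$ depend only on $Y$ (so the $V$-part of $\Delta$ is purely second order), the identification $\sqrt{\det g}=c\,(\det Y)^{-(n+m+1)/2}$ consistent with Lemma 4.2, and the drift formula for a Laplace--Beltrami operator taken against a reweighted volume combined with ${\rm grad}_{G_Y}(\log\det Y)={\rm Tr}\big(Y\Yd\big)$ --- is sound, and each ingredient is verified correctly; in particular the gradient identification via $d\log\det Y={\rm Tr}(Y^{-1}dY)$, and the fact that the $Y$-block of the metric is $A\,G_Y$ so the drift carries the factor $1/A$, which your final expression does include. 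A sanity check at $n=m=1$, where the metric is $Ay^{-2}dy^2+By^{-1}dv^2$, gives $\tfrac1A\big(y^2\partial_y^2+\tfrac12y\partial_y\big)+\tfrac yB\partial_v^2$, in agreement with the asserted formula. Compared with the coordinate computation of \cite{Y4}, your argument makes it transparent \emph{why} the only correction to the sum of the two ``pure'' Laplacians is the first-order term $-\tfrac m{2A}{\rm Tr}\big(Y\Yd\big)$: it is exactly the drift created by the factor $(\det Y)^{-m/2}$ that the $V$-block contributes to the volume.

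One point you pass over silently and should make explicit: your derivation produces the $V$-term $\tfrac1B\sum_{k=1}^m\big(\Vd\,Y\,{}^t(\Vd)\big)_{kk}$, i.e.\ the trace, whereas the display in the Lemma reads $\sum_{k\le p}(\cdot)_{kp}$. For $m\ge2$ these differ by the off-diagonal operators $\sum_{a,b}Y_{ab}\,\partial_{v_{ka}}\partial_{v_{pb}}$ with $k<p$, and such cross terms genuinely cannot occur in the Laplace--Beltrami operator of this metric, because distinct rows of $V$ are mutually orthogonal for $ds^2_{n,m;A,B}$ and hence the inverse metric has no $(k,p)$-entries with $k\neq p$. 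So the subscript $k\le p$ must be read as $k=p$ (a slip plausibly induced by the notation $\Omega^{(0)}_{pq}$, $p\le q$, of (4.11)); your proof establishes the corrected formula, and you should say so rather than assert that you have recovered the displayed formula verbatim.
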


\begin{proof}
The proof can be found in \cite{Y4}.
\end{proof}

\vskip 0.5cm
\begin{lemma} The following volume element $dv_{n,m}(Y,V)$ on ${\mathscr P}_{n,m}$ defined by
\begin{equation}
dv_{n,m}(Y,V)=(\det\,Y)^{-{ {n+m+1}\over 2} }[dY][dV]
\end{equation} is invariant under the action (4.2)
of $\Gnm (\BR)$.
\end{lemma}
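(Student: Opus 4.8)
The plan is to verify that for each $(A,a)\in\Gnm(\BR)$ the diffeomorphism $\phi_{(A,a)}\colon(Y,V)\mapsto(Y_\star,V_\star)=(A,a)\cdot(Y,V)$ of ${\mathscr P}_{n,m}$ satisfies $\phi_{(A,a)}^{*}\,dv_{n,m}=dv_{n,m}$. Since $\phi_{(A,a)\circ(B,b)}=\phi_{(A,a)}\circ\phi_{(B,b)}$, invariance under a generating set of $\Gnm(\BR)$ already forces invariance under the whole group; and because $(A,a)=(A,0)\circ(I_n,a)$, it is enough to treat the pure translations $(I_n,a)$ and the pure linear elements $(A,0)$ separately. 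For a translation one has $Y_\star=Y$ and $V_\star=V+a$, hence $\det Y_\star=\det Y$, $[dY_\star]=[dY]$ and $[dV_\star]=[dV]$, so there is nothing to check.

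The content is therefore the linear case $(A,0)$, where $(Y,V)\mapsto(AY\,{}^t\!A,\,V\,{}^t\!A)$. Here I would observe that the Jacobian factorizes: $Y_\star$ depends only on $Y$ and $V_\star$ only on $V$, so the Jacobian of $\phi_{(A,0)}$ is the Jacobian of $Y\mapsto AY\,{}^t\!A$ on the $\frac{n(n+1)}{2}$-dimensional space of symmetric matrices times the Jacobian of $V\mapsto V\,{}^t\!A$ on $\Rmn$. I would then record the two factors as
\[
[dY_\star]=|\det A|^{\,n+1}\,[dY],\qquad [dV_\star]=|\det A|^{\,m}\,[dV].
\]
The first identity is precisely the computation underlying the $GL(n,\BR)$-invariance of $dv_n$ in Formula (2.3): it is checked for diagonal $A={\rm diag}(a_1,\dots,a_n)$, where $y_{ij}\mapsto a_ia_jy_{ij}$ gives Jacobian $\prod_{i\le j}a_ia_j=(\det A)^{n+1}$, and then extended to all $A\in GL(n,\BR)$ by a density argument, both sides being polynomial in the entries of $A$. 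The second is immediate: $V\mapsto V\,{}^t\!A$ acts on each of the $m$ rows of $V$ by the linear map $v\mapsto v\,{}^t\!A$ of $\BR^n$, whose Jacobian is $\det A$.

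It then remains to combine these with $\det Y_\star=(\det A)^2\det Y$:
\[
(\det Y_\star)^{-\frac{n+m+1}{2}}\,[dY_\star][dV_\star]=|\det A|^{-(n+m+1)+(n+1)+m}\,(\det Y)^{-\frac{n+m+1}{2}}\,[dY][dV],
\]
and since $-(n+m+1)+(n+1)+m=0$ the power of $|\det A|$ disappears, giving exactly $dv_{n,m}(Y,V)$. A more compact packaging is to write $dv_{n,m}=dv_n(Y)\cdot(\det Y)^{-m/2}[dV]$, invoke the invariance of $dv_n$ from Formula (2.3), and note separately that $(\det Y)^{-m/2}[dV]$ is invariant because under $Y\mapsto AY\,{}^t\!A$ the factor $|\det A|^{-m}$ coming from $(\det Y)^{-m/2}$ cancels the factor $|\det A|^{m}$ coming from $[dV]$, while translations act trivially on both. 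I do not expect a genuine obstacle here: the single non-bookkeeping ingredient is the classical Jacobian of $Y\mapsto AY\,{}^t\!A$ on symmetric matrices, which is already at hand from Section 2, and the rest is exponent arithmetic.
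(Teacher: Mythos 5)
Your argument is correct and complete. The paper itself gives no proof of this lemma, deferring entirely to the reference \cite{Y4}, so there is nothing in the text to compare against; your computation --- splitting $(A,a)=(A,0)\circ(I_n,a)$, using the classical Jacobian $(\det A)^{n+1}$ of $Y\mapsto AY\,{}^t\!A$ on symmetric matrices (already implicit in the invariance of $dv_n$ in Formula (2.3)) together with the Jacobian $(\det A)^{m}$ of $V\mapsto V\,{}^t\!A$, and checking that the exponents cancel via $-(n+m+1)+(n+1)+m=0$ --- is exactly the standard argument one would expect the cited source to contain. The only cosmetic point is that diagonal matrices alone are not Zariski dense in $GL(n,\BR)$, so the extension of the symmetric-space Jacobian formula from the diagonal case should be phrased via multiplicativity of the Jacobian (or by citing the known invariance of $dv_n$ outright, as you also do); this does not affect the validity of the proof.
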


\begin{proof}
The proof can be found in \cite{Y4}.
\end{proof}

\vskip 3mm
We let
$$ \G_n:=GL(n,\BZ)\qquad {\rm and}\qquad \G_{n,m}:=\G_n \ltimes \BZ^{(m,n)}$$
be the discrete arithmetic subgroup of $GL(n,\BR)$ and $GL_{n,m}(\BR)$ respectively. We let
$${\mathfrak Y}_n:=\G_n\ba {\mathscr P}_n$$
be the Minkowski moduli space. If $Y_1,Y_2\in {\mathscr P}_n$ such that
$Y_2=\gamma Y_1\,{}^t\gamma =Y_1[\,{}^t\gamma]$ for some $\g \in \G_n$, then the real torus
$T(Y_1):=\BR^{(m,n)}/L(Y_1)$ is diffeomorphic to the torus $T(Y_2):=\BR^{(m,n)}/L(Y_2)$,
where $L(Y_1)=\BZ^{(m,n)}Y_1$ and $L(Y_2)=\BZ^{(m,n)}Y_2$ are lattices in the Euclidean space
$\BR^{(m,n)}$. Let
\begin{equation}\label{Formula (4.3)}
  \mathfrak{Y}_{n,m}:=\G_{n,m}\ba \mathscr{P}_{n,m}
\end{equation}
be the universal family of principally polarized real tori of dimension $mn$.
Roughly we say that $\mathfrak{Y}_{n,m}$ is a fibre bundle over
$\mathfrak{Y}_n$ with fibres given by principally polarized real tori.
We refer to \cite{Y3,Y4} for related topics about $\mathfrak{Y}_n$ and $\mathfrak{Y}_{n,m}.$

\vskip 3mm
For any two positive integers $r,s\in \BZ^+$ with $r<s$ and a fixed positive integer
$m\in\BZ^+,$ we define
$$ \xi_{r,s}:GL_{r,m}(\BR)\lrt GL_{s,m}(\BR)$$
by
\begin{equation*}
  \xi_{r,s}((A,0)):=\left( \begin{pmatrix}
                       A & 0 \\
                       0 & I_{s-r}
                     \end{pmatrix}, (a,0)\right),
\end{equation*}
where $A\in GL(r,\BR),\ a\in \BR^{(m,r)}$ and $(a,0)\in \BR^{(m,s)}.$
\vskip 3mm
For a fixed positive integer $m\in\BZ^+,$ we let
$$
GL_{\infty,m}:=\,\lim_{\begin{subarray}{c} \longrightarrow\\ ^r \end{subarray}}
GL_{r,m}(\BR), \quad
K_{\infty,m}:=\,\lim_{\begin{subarray}{c} \longrightarrow\\ ^r \end{subarray}}
K_{r,m}(\BR)\quad {\rm and}\quad
\G_{\infty,m}:=\,\lim_{\begin{subarray}{c} \longrightarrow\\ ^r \end{subarray}}
\G_{r,m}
$$
be the inductive limits of the directed systems
$(GL_{r,m}(\BR),\xi_{r,s}),\ (K_{r,m}(\BR),\xi_{r,s})$ and $(\G_{r,m}(\BR),\xi_{r,s})$
respectively.
\vskip 3mm
For any two positive integers $r,s\in \BZ^+$ with $r<s$ and a fixed positive integer
$m\in\BZ^+,$ we define
$$\nu_{r,s}:\mathscr{P}_{r,m}\lrt \mathscr{P}_{s,m}$$
by
\begin{equation*}\label{Formula 4.4}
  \nu_{r,s}((Y,\alpha)):=\left( \begin{pmatrix}
                       Y & 0 \\
                       0 & I_{s-r}
                     \end{pmatrix}, (\alpha,0)\right),
\end{equation*}
where $Y\in \mathscr{P}_r, \ \alpha\in \BR^{(m,r)}$ and $(\alpha,0)\in \BR^{(m,s)}$.
We let
$$
\mathscr{P}_{\infty,m}:=
\,\lim_{\begin{subarray}{c} \longrightarrow\\ ^r \end{subarray}} \mathscr{P}_{r,m}
$$
be the inductive limit of the directed system $(\mathscr{P}_{r,m},\nu_{r,s}).$
Then $GL_{\infty,m}$ acts on $\mathscr{P}_{\infty,m}$ transitively and
$K_{\infty,m}$ is the stabilizer of this action at the origin. Furthermore
$\G_{\infty,m}$ acts on $\mathscr{P}_{\infty,m}$ properly discontinously.
We put
\begin{equation}\label{Formula 4.8}
  \mathscr{P}_{\infty,m}:=GL_{\infty,m}/K_{\infty,m}
\end{equation}
and
\begin{equation}\label{Formula 4.9}
  \mathfrak{Y}_{\infty,m}:=\G_{\infty,m} \ba GL_{\infty,m}/K_{\infty,m}.
\end{equation}
\vskip 2mm
We may introduce automorphic forms on $\mathscr{P}_{\infty,m}$ for $\G_{\infty,m}$
and study the arithmetic and geometry of $\mathfrak{Y}_{\infty,m}$. In fact,
$\mathfrak{Y}_{\infty,m}$ is important geometrically and number theoretically.

\vskip 3mm
We denote by $\BD({\mathscr P}_{n,m})$ the algebra of all differential operators on
${\mathscr P}_{n,m}$ invariant under the action (4.2) of $GL_{n,m}(\BR)$.
It is known that $\BD({\mathscr P}_{n,m})$ is not commutative (cf.\,\cite{Y3}).

\vskip 3mm
We present several natural problems.
\begin{problem}
Is $\BD({\mathscr P}_{n,m})$ finitely generated ?
\end{problem}

\begin{problem}
Find explicit generators of $\BD({\mathscr P}_{n,m})$.
\end{problem}

\begin{problem}
Find all the relations among the generators of $\BD({\mathscr P}_{n,m})$.
\end{problem}

\begin{remark}
It is true that $\BD({\mathscr P}_{n,m})$ is finitely generated\,({\rm cf.}\,\cite{Y3}).
\end{remark}

\vskip 5.3mm
We present some invariant differential operators on
${\mathscr P}_{n,m}.$ We define the differential operators $D_j,\
 \Omega_{pq}$ and $L_p$ on ${\mathscr P}_{n,m}$ by
\begin{equation}
D_j=\,\textrm{Tr}\left(\left(\,2Y\Yd\right)^j\right),\ \ \ \ 1\leq
j\leq n,\end{equation}

\begin{equation}
\Omega_{pq}^{(k)}=\,\left\{\,{{\partial}\over{\partial V}}\left(
2Y{{\partial}\over{\partial Y}}\right)^kY\,{}^{{}^{{}^{{}^\text{\scriptsize $t$}}}}\!\!\!\left({{\partial}\over
{\partial V}}\right)\,\right\}_{pq},\quad 0\leq k\leq n-1,\ 1\leq p\leq q\leq
m\end{equation} and
\begin{equation}
L_p=\,\textrm{Tr}\left(\left\{Y\,{}^{{}^{{}^{{}^\text{\scriptsize $t$}}}}\!\!\!\left(\Vd\right)\Vd\right\}^p\right),\
\ \ 1\leq p\leq m.\end{equation}
\noindent
Here for a matrix $A$ we denote by $A_{pq}$ the $(p,q)$-entry of $A$.

\begin{remark}
We want to mention the special invariant differential
operator on ${\mathscr P}_{n,m}$. In \cite{Y}, the author studied the
following differential operator $M_{n,m,{\mathcal M}}$ on
${\mathscr P}_{n,m}$ defined by
\begin{equation}
M_{n,m,\CM}=\det\,(Y)\cdot\det\,\left( \Yd+{1\over {8\pi}}\,
{}^{{}^{{}^{{}^\text{\scriptsize $t$}}}}\!\!\!\left(\Vd\right)\CM^{-1}\left(\Vd\right)\right),
\end{equation}
where $\CM$ is a positive definite, symmetric
half-integral matrix of degree $m$. This differential operator
characterizes {\sf singular\ Jacobi\ forms}. For more detail, we
refer to \cite{Y}. According to (4.3) and (4.4), we see easily that
the differential operator $M_{n,m,\CM}$ is invariant under the
action (4.2) of $\Gnm (\BR)$.
\end{remark}

\vskip 3mm
We recall the invariant differential
operators $D_j\, (1\leq j\leq n)$ in (4.10) and
$\Omega_{pq}^{(k)}\ (0\leq k\leq n-1,\ 1\leq p\leq q\leq m)$ in (4.11).

\vskip 0.5cm\noindent
\begin{theorem}
The following relations hold:
\begin{equation}
[D_i,D_j]=0\quad \textrm{for all}\ 1\leq i,j\leq n,
\end{equation}
\begin{equation}
[\Omega_{kl}^{(0)},\Omega_{pq}^{(0)}]=0,\quad 1\leq k\leq l\leq m,\ 1\leq p\leq q\leq m
\end{equation}
and
\begin{equation}
[D_1,\Omega_{pq}^{(0)}]=2\,\Omega_{pq}^{(0)}\quad \textrm{for all}\ \ 1\leq p\leq q \leq m.
\end{equation}
\end{theorem}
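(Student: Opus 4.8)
The plan is to prove the three commutator identities (4.14)--(4.16) by unwinding the operators $D_j$ and $\Omega_{pq}^{(0)}$ of (4.10)--(4.11) into explicit coordinates and then tracking which set of variables each operator touches. The single structural fact behind all of it is that the entries of $Y$ are independent of $V$, so every $y_{ab}$ commutes with every $\partial/\partial v_{kl}$; unwinding the matrix product in (4.11) at $k=0$ therefore gives
$$\Omega_{pq}^{(0)}=\left\{\Vd\, Y\, {}^{t}\!\left(\Vd\right)\right\}_{pq}=\sum_{a,b=1}^{n} Y_{ab}\,\frac{\partial^{2}}{\partial v_{pa}\,\partial v_{qb}},$$
so that $\Omega_{pq}^{(0)}$ is a second-order operator in the $V$-variables whose coefficients are polynomials in $Y$ alone, whereas $D_j$ involves only $Y$ and $\Yd$.

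For (4.14), note that $\bigl(2Y\Yd\bigr)^{j}=2^{j}\bigl(Y\Yd\bigr)^{j}$ as a product of matrices of operators (the scalar $2$ commutes with everything), hence $D_j=2^{j}\delta_j$ where $\delta_j={\rm Tr}\bigl((Y\Yd)^{j}\bigr)$ is the $j$-th Maass--Selberg operator of Section~2. By Theorem~2.2 the algebra $\BD(\CP)$ generated by $\delta_1,\dots,\delta_n$ is commutative, so $[\delta_i,\delta_j]=0$; since the $D_j$ act only on the $\CP$-factor of $\CP\times\Rmn$, this yields $[D_i,D_j]=0$ on ${\mathscr P}_{n,m}$. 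For (4.15), both $\Omega_{kl}^{(0)}$ and $\Omega_{pq}^{(0)}$ have, by the expansion above, the form $\sum_\alpha f_\alpha(Y)\,P_\alpha$ with $f_\alpha$ depending only on $Y$ and $P_\alpha$ a constant-coefficient differential operator in the variables $v_{kl}$; any two operators of this shape commute, since the multiplication operators $f_\alpha$ commute with one another, the $P_\alpha$ commute with one another, and each $f_\alpha(Y)$ commutes with each $P_\beta$ because $Y$ does not depend on $V$. This gives $[\Omega_{kl}^{(0)},\Omega_{pq}^{(0)}]=0$ with essentially no computation.

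For (4.16), the point is to identify $D_1$: from $\Yd=\left(\frac{1+\delta_{ij}}{2}\,\partial/\partial y_{ij}\right)$ one gets $D_1={\rm Tr}\bigl(2Y\Yd\bigr)=2\sum_{i\le j}y_{ij}\,\partial/\partial y_{ij}$, i.e.\ twice the Euler operator in the entries of $Y$. Hence $D_1$ commutes with every $\partial/\partial v_{kl}$, and $[D_1,Y_{ab}]=2Y_{ab}$ as multiplication operators since each $Y_{ab}$ is homogeneous of degree $1$ in $Y$. Applying this termwise to the expansion of $\Omega_{pq}^{(0)}$ gives
$$[D_1,\Omega_{pq}^{(0)}]=\sum_{a,b=1}^{n}[D_1,Y_{ab}]\,\frac{\partial^{2}}{\partial v_{pa}\,\partial v_{qb}}=2\sum_{a,b=1}^{n}Y_{ab}\,\frac{\partial^{2}}{\partial v_{pa}\,\partial v_{qb}}=2\,\Omega_{pq}^{(0)}.$$

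I do not expect a genuine obstacle here; the only delicate point is notational, namely manipulating matrices whose entries are differential operators and respecting the symmetric-variable convention in $\Yd$ so that the expansion of $\Omega_{pq}^{(0)}$ and the identity $D_1(Y_{ab})=2Y_{ab}$ carry the right numerical constants. The argument stays clean precisely because all three identities involve only $\Omega^{(0)}$, so no commutator between a $\partial/\partial Y$ and multiplication by an entry of $Y$ is ever triggered; the analogues for $\Omega^{(k)}$ with $k\ge 1$, or relations pairing $D_j$ for $j\ge 2$ with the $\Omega$'s, would require exactly that extra bracket computation and be considerably harder.
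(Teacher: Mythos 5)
Your proposal is correct and follows essentially the same route as the paper: relation (4.14) is reduced to Selberg's commutativity of the Maass--Selberg operators via $D_j=2^j\delta_j$, and (4.15)--(4.16) are obtained by writing $D_1=2\sum_{i\le j}y_{ij}\,\partial/\partial y_{ij}$ and $\Omega_{pq}^{(0)}=\sum_{a,b}Y_{ab}\,\partial^2/(\partial v_{pa}\partial v_{qb})$ in coordinates and computing directly. The only difference is that you spell out the ``direct calculation'' the paper leaves implicit (disjoint variables for (4.15), the Euler-operator bracket $[D_1,Y_{ab}]=2Y_{ab}$ for (4.16)), and your coordinate expression for $\Omega_{pq}^{(0)}$ agrees with the paper's after using the symmetry $Y_{ab}=Y_{ba}$.
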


\begin{proof}
The relation (4.14) follows from the work of Atle Selberg (cf.\,\cite{M2, S1,T}).
We take a coordinate $(Y,V)$ in ${\mathscr P}_{n,m}$ with $Y=(y_{ij})$ and $V=(v_{kl})$.
We put
$$\Yd=\left( { {1+\delta_{ij}}\over 2 }
{{\partial}\over {\partial y_{ij} }}\right)\quad \textrm{and}\quad \Vd=\left({
{\partial}\over {\partial v_{kl}} } \right),$$ where $1\leq
i,j,l\leq n$ and $1\leq k\leq m.$ Then we get

\begin{eqnarray*}
D_1 &=& 2\,\sum_{1\le i\leq j\leq n} y_{ij}\,
{{\partial}\over {\partial y_{ij} }},\\
\Omega_{pq}^{(0)}&=& \sum_{a=1}^n y_{aa}\,{{\partial^2\qquad}\over {\partial v_{pa}\partial v_{qa}} }\,+\,
\sum_{1\leq a < b\leq n} y_{ab} \left( {{\partial^2\qquad}\over {\partial v_{pa}\partial v_{qb}} }\,+\,
{{\partial^2\qquad}\over {\partial v_{pb}\partial v_{qa}} } \right).
\end{eqnarray*}
By a direct calculation, we obtain the desired relations (4.15) and (4.16).
\end{proof}

\begin{conjecture}
The invariant differential operators $D_j\, (1\leq j\leq n)$and
$\Omega_{pq}^{(k)}\ (0\leq k\leq n-1,\ 1\leq p\leq q\leq m)$ generate
the noncommutative algebra $\BD ({\mathscr P}_{n,m})$.
\end{conjecture}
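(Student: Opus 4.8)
The plan is to descend to the associated graded algebra and reduce the statement to classical invariant theory for the orthogonal group. First realize the Minkowski--Euclid space as a reductive homogeneous space: ${\mathscr P}_{n,m}\cong G/K$ with $G=GL_{n,m}(\BR)=GL(n,\BR)\ltimes\BR^{(m,n)}$ and $K=O(n,\BR)$ the stabilizer of the base point $o=(I_n,0)$ for the action (4.2). Although $G$ is not reductive, $K$ is compact, so the Lie algebra $\fg=\mathfrak{gl}(n,\BR)\ltimes\BR^{(m,n)}$ admits an $\mathrm{Ad}(K)$-invariant complement ${\mathfrak m}$ to $\fk=\mathfrak{o}(n,\BR)$, namely ${\mathfrak m}=\mathrm{Sym}_n(\BR)\oplus\BR^{(m,n)}$, where $\mathrm{Sym}_n(\BR)$ denotes the space of real symmetric $n\times n$ matrices. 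A direct computation with the multiplication law (4.1) shows that $\mathrm{Ad}(k)$ acts on ${\mathfrak m}$ by $(S,v)\mapsto(kSk^{-1},\,vk^{-1})$ for $k\in O(n,\BR)$, so the isotropy representation is the conjugation action of $O(n)$ on $\mathrm{Sym}_n(\BR)$ together with $m$ copies of the standard representation of $O(n)$ on $\BR^{n}$, and $T_o{\mathscr P}_{n,m}\cong{\mathfrak m}$ as $K$-modules.

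Next, filter $\BD({\mathscr P}_{n,m})$ by the order of differential operators. The degree-$d$ part of the symbol of a $G$-invariant operator is a $G$-invariant section of $S^d(T{\mathscr P}_{n,m})$, hence determined by its value at $o$, which lies in $S^d({\mathfrak m})^{O(n)}$; moreover a $G$-invariant section vanishing at $o$ vanishes identically (by transitivity of $G$), so the induced map on the associated graded algebra is injective. Since principal symbols multiply under composition, this gives an \emph{injective} homomorphism of graded algebras
$$\sigma_o:\ \mathrm{gr}\,\BD({\mathscr P}_{n,m})\ \hookrightarrow\ S({\mathfrak m})^{O(n)}.$$
Write $(S,v)$ for the coordinates on ${\mathfrak m}$, with $v_1,\dots,v_m$ the rows of the $\BR^{(m,n)}$-component, and identify $T_o^{*}{\mathscr P}_{n,m}\cong{\mathfrak m}$ via the $O(n)$-invariant trace forms. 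Then passing to the principal symbol and evaluating at $o$ (where $Y=I_n$) amounts to substituting $S$ for each occurrence of $\Yd$ and $v$ for each occurrence of $\Vd$ in the defining formulas (4.10), (4.11), which gives
$$\sigma_o(D_j)=2^{j}\,\mathrm{Tr}(S^{j}),\qquad \sigma_o(\Omega_{pq}^{(k)})=2^{k}\,v_p\,S^{k}\,{}^t v_q .$$
Hence it suffices to show that these elements generate the invariant algebra $S({\mathfrak m})^{O(n)}$.

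This is the heart of the matter. By the first fundamental theorem of invariant theory for the full orthogonal group $O(n)$ acting on one symmetric matrix $S$ together with vectors $v_1,\dots,v_m$ (Weyl; see also Procesi), the algebra $S({\mathfrak m})^{O(n)}$ is generated by the traces $\mathrm{Tr}(S^{j})$ with $j\ge1$ and the contractions $v_p\,S^{k}\,{}^t v_q$ with $k\ge0$, $1\le p\le q\le m$; for the \emph{full} orthogonal group $O(n)$ (as opposed to $SO(n)$) no further determinant-type Levi-Civita invariants occur. Applying the Cayley--Hamilton theorem to $S$ together with Newton's identities, one expresses $\mathrm{Tr}(S^{j})$ for $j>n$ and $v_p\,S^{k}\,{}^t v_q$ for $k\ge n$ as polynomials in those with $j\le n$ and $k\le n-1$, so $S({\mathfrak m})^{O(n)}$ is already generated by $\{\mathrm{Tr}(S^{j}):1\le j\le n\}$ and $\{v_p\,S^{k}\,{}^t v_q:0\le k\le n-1,\ 1\le p\le q\le m\}$. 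Comparing with the symbol computation, the subalgebra of $S({\mathfrak m})^{O(n)}$ generated by the $\sigma_o(D_j)$ and the $\sigma_o(\Omega_{pq}^{(k)})$ is everything; therefore $\sigma_o$ is onto, hence an isomorphism of graded algebras, and $D_1,\dots,D_n$ together with the $\Omega_{pq}^{(k)}$ generate $\mathrm{gr}\,\BD({\mathscr P}_{n,m})$. A standard order-lowering induction then lifts this to $\BD({\mathscr P}_{n,m})$ itself: given $D$ of order $d$, subtract the polynomial in the generators having the same principal symbol to obtain an operator of order $<d$, and iterate. (The operators $L_p$ of (4.12) are then automatically in this algebra, since $\sigma_o(L_p)=\mathrm{Tr}(({}^t v\, v)^p)=\mathrm{Tr}((v\,{}^t v)^p)$ is a polynomial in the entries $v_p\,{}^t v_q=\sigma_o(\Omega_{pq}^{(0)})$.)

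I expect the main obstacle to be the invariant-theoretic step: making rigorous that a symmetric matrix together with $m$ vectors produces no $O(n)$-invariants beyond the $\mathrm{Tr}(S^{j})$ and the $v_p\,S^{k}\,{}^t v_q$, and pinning down the cut-offs $j\le n$ and $k\le n-1$ (the latter must match exactly the range $0\le k\le n-1$ in the statement). A secondary technical point is to confirm that the principal-symbol map and the order-lowering induction behave as claimed in this \emph{non}-reductive setting; but both steps use only transitivity of the $G$-action and compactness of $K$, never reductivity of $G$, so they should carry over verbatim from the symmetric-space case $m=0$ (where they recover Selberg's description of $\BD({\mathscr P}_n)$ by the $\delta_j$).
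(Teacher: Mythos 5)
This statement is posed in the paper as Conjecture 4.1: the author offers \emph{no} proof of it, so there is nothing of the paper's to compare your argument against. The nearest the paper comes is Remark 4.1 (finite generation of $\BD({\mathscr P}_{n,m})$, citing \cite{Y3}) and Theorem 4.1 (a few commutator relations), neither of which settles whether the specific operators $D_j$ and $\Omega_{pq}^{(k)}$ generate. Your proposal is therefore not an alternative route but, if completed, a resolution of the conjecture.

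As far as I can check, each step of your strategy is sound. The two load-bearing points are: (i) the symbol map $\sigma_o$ on $\mathrm{gr}\,\BD({\mathscr P}_{n,m})$ is a well-defined injective homomorphism of commutative graded algebras into $S({\mathfrak m})^{O(n)}$ --- this uses only transitivity of $GL_{n,m}(\BR)$, compactness of $O(n)$, and the fact that commutators of differential operators drop order, never reductivity of the ambient group; and (ii) the first fundamental theorem for $O(n)$ acting on one symmetric matrix and $m$ vectors, together with Cayley--Hamilton and Newton's identities for the cutoffs $j\leq n$, $k\leq n-1$. Point (ii) is classical and correct (complete contractions decompose into trace cycles $\mathrm{Tr}(S^j)$ and chains $v_p S^k\,{}^tv_q$, and for the full group $O(n)$ no determinantal invariants survive), but it is the one place where you must either give a precise citation in exactly this form or write out the polarization/contraction argument; your own flagging of this as the main obstacle is accurate. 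A genuinely good feature of the write-up is that surjectivity of $\sigma_o$ is \emph{deduced} from (ii) rather than imported from Helgason's symmetrization bijection, and that the order-lowering induction uses only principal symbols; this sidesteps entirely the question of whether $D_j$ and $\Omega_{pq}^{(k)}$ coincide with the symmetrized operators attached to the polynomial invariants, which is presumably where the author stopped short of a proof. Two small items to make explicit when you write this up: the degree-$d$ component of $S({\mathfrak m})^{O(n)}$ is spanned by monomials in the homogeneous generators of total weighted degree exactly $d$ (so the subtraction step really lowers the order), and the factor $\tfrac{1+\delta_{ij}}{2}$ in the definition of $\partial/\partial Y$ only rescales the $O(n)$-equivariant identification of the cotangent fibre with $\mathrm{Sym}_n(\BR)$ and so does not affect the argument.
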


\begin{conjecture}
The above relations (4.14), (4.15) and (4.16)
generate all the relations among the set
$$\left\{ D_j,\,\Omega_{pq}^{(k)}\,|\
1\leq j\leq n,\ 0\leq k\leq n-1,\ 1\leq p\leq q\leq m \right\}.$$
\end{conjecture}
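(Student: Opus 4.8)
The assertion is that the two-sided ideal of all relations satisfied by the operators $D_j$ and $\Omega_{pq}^{(k)}$ in $\BD({\mathscr P}_{n,m})$ is generated by the three families (4.14)--(4.16). I would first recast this as an injectivity statement. Let $\mathcal{A}$ be the associative $\BC$-algebra presented by abstract symbols $d_j\ (1\le j\le n)$ and $w_{pq}^{(k)}\ (0\le k\le n-1,\ 1\le p\le q\le m)$ subject only to the relations obtained from (4.14)--(4.16) under $D_j\mapsto d_j$ and $\Omega_{pq}^{(0)}\mapsto w_{pq}^{(0)}$. Sending each symbol to the corresponding operator gives an algebra homomorphism $\Phi:\mathcal{A}\lrt \BD({\mathscr P}_{n,m})$, which is surjective by Conjecture 4.1. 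The conjecture is then equivalent to the injectivity of $\Phi$: no relation holds among the operators beyond the formal consequences of (4.14)--(4.16).

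To prove injectivity I would exhibit an explicit linear basis of $\mathcal{A}$ and verify that $\Phi$ carries it to a linearly independent family of operators. For the basis I would run a Gr\"obner--Shirshov (Diamond Lemma) analysis: fix a monomial order on the free algebra and read the relations as rewriting rules, with (4.14) sorting the $d_j$, (4.15) sorting the $w_{pq}^{(0)}$, and (4.16) in the form $d_1 w_{pq}^{(0)}\to w_{pq}^{(0)} d_1 + 2\,w_{pq}^{(0)}$ moving $d_1$ rightward. One must check confluence of the overlaps. The instructive case is the word $d_j d_1 w_{pq}^{(0)}$ with $j>1$: sorting first yields $d_1 d_j w_{pq}^{(0)}$, while applying (4.16) first yields $d_j w_{pq}^{(0)} d_1 + 2\,d_j w_{pq}^{(0)}$, and these two expressions are indeed equal in $\mathcal{A}$ because $d_1 d_j=d_j d_1$ reduces $d_1 d_j w_{pq}^{(0)}-d_j w_{pq}^{(0)} d_1$ to $d_j[d_1,w_{pq}^{(0)}]=2\,d_j w_{pq}^{(0)}$. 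Thus completion of the rewriting system introduces only further consequences of (4.14)--(4.16); granting that it terminates, the irreducible monomials form an explicit basis of $\mathcal{A}$ (note that the symbols $w_{pq}^{(k)}$ with $k\ge1$, carrying no relations, remain freely orderable).

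The analytic core is the linear independence of the images of these irreducible monomials. Here I would adapt the eigenfunction calculus of Section 2, testing a hypothetical vanishing combination against the family of functions $p_s(Y)\,\exp\!\big({\rm Tr}(\,^tC\,V)\big)$ with parameters $s\in\BC^n$ and $C\in\BC^{(m,n)}$. By Theorem 2.4 the operators $D_j$ act through the symmetric polynomials $\lambda_j$, separating the $d$-degree, while each $\Omega_{pq}^{(k)}$, through its two factors $\Vd$, pulls down a quadratic expression in the entries of $C$ whose coefficient is $\big(2Y\Yd\big)^k Y$ applied to $p_s(Y)$. Grading monomials first by $V$-order and then by $Y$-order and comparing leading symbols should reduce independence to the nonvanishing of an explicit Vandermonde-type determinant in $s$ and in the $C$-bilinear forms, so that distinct irreducible monomials are distinguished by their leading behaviour.

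The hard part will be certifying that $\BD({\mathscr P}_{n,m})$ carries \emph{no} relations beyond the three families, since (4.14)--(4.16) constrain none of the cross-commutators $[D_j,\Omega_{pq}^{(k)}]$ with $j\ge2$, nor $[\Omega_{pq}^{(k)},\Omega_{rs}^{(l)}]$ or $[D_1,\Omega_{pq}^{(k)}]$ with $k\ge1$. Consequently $\mathcal{A}$ is very large, and the entire burden falls on the independence step: one must show that these unconstrained products do not collapse in $\BD({\mathscr P}_{n,m})$. Two things must be controlled—that the Gr\"obner--Shirshov completion terminates and never produces a relation outside the ideal generated by (4.14)--(4.16), and that the leading-symbol determinant above is genuinely nonzero. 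If instead some cross-commutator satisfied a hidden identity in $\BD({\mathscr P}_{n,m})$, the computation would expose it as a missing relation, falsifying the conjecture as stated; I therefore expect that pinning down these cross-commutators explicitly, presumably by realizing $\mathcal{A}$ as a crossed product of Selberg's commutative algebra $\BD(\CP)=\BC[\delta_1,\dots,\delta_n]$ with the $O(n,\BR)$-invariant algebra generated by the $V$-derivatives $\Vd$, is the decisive and most delicate step.
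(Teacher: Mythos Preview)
The statement you are attempting to prove is labeled \textbf{Conjecture} in the paper, and the paper offers no proof, sketch, or heuristic argument for it; it is simply recorded as an open problem alongside Conjecture~4.1 and Problems~4.1--4.9. There is therefore no ``paper's own proof'' against which your proposal can be compared.

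As for the proposal itself, it is a reasonable strategic outline, and you are candid that it is not a proof: you rely on Conjecture~4.1 for surjectivity, you ``grant'' termination of the rewriting system, and you say the leading-symbol argument ``should reduce'' independence to a determinant you do not compute. More seriously, you have already put your finger on what is almost certainly a fatal obstruction to the conjecture as stated rather than merely a gap in your argument. The relations (4.14)--(4.16) impose \emph{no} constraint whatsoever on $[D_j,\Omega_{pq}^{(0)}]$ for $j\ge 2$, on $[D_i,\Omega_{pq}^{(k)}]$ for $k\ge 1$, or on $[\Omega_{pq}^{(k)},\Omega_{rs}^{(l)}]$ for $(k,l)\neq(0,0)$. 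Hence in your presented algebra $\mathcal{A}$ these commutators are new, algebraically free elements. But in $\BD({\mathscr P}_{n,m})$ one can compute, for instance, $[D_2,\Omega_{pq}^{(0)}]$ directly: since $D_2$ differentiates only in $Y$ and $\Omega_{pq}^{(0)}$ has the form $\big(\Vd\,Y\,{}^t(\Vd)\big)_{pq}$, the bracket is a concrete second-order operator in $V$ with $Y$-dependent coefficients, and by the very definition (4.11) it will be a specific linear combination of the $\Omega_{pq}^{(k)}$. That equality is a genuine relation in $\BD({\mathscr P}_{n,m})$ which visibly does not lie in the two-sided ideal generated by (4.14)--(4.16), since those relations never couple $D_2$ to any $\Omega$. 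Your injectivity programme would therefore fail at the independence step, and the failure would not be a defect of the method but evidence that the list (4.14)--(4.16) is incomplete. In short: your framework is sound for \emph{testing} the conjecture, and the test you describe would most likely refute it rather than confirm it.
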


\begin{problem}
Find a natural way to construct generators of $\BD ({\mathscr P}_{n,m})$.
\end{problem}

\vskip 0.5cm
Using $GL_{n,m}(\BR)$-invariant differential operators on the Minkowski-Euclid space
${\mathscr P}_{n,m},$
we introduce a notion of automorphic forms on ${\mathscr P}_{n,m}$.
\vskip 0.3cm\noindent
We recall the discrete subgroup $\Gamma_{n,m}$ of $GL_{n,m}(\BR)$ defined by
\begin{equation*}
\Gamma_{n,m}:=GL(n,\BZ)\ltimes \BZ^{(m,n)}.
\end{equation*}
Let ${\mathscr Z}_{n,m}$ be the center of $\BD ({\mathscr P}_{n,m})$.

\vskip 0.5cm\noindent
\begin{definition} A smooth function $f:{\mathscr P}_{n,m}\lrt \BC$ is said to be
an {\sf automorphic form} for $\Gamma_{n,m}$ if it satisfies the following conditions
{\rm (GL1)}--{\rm (GL3)}\,:
\vskip 0.2cm\noindent
\ {\rm (GL1)} $f$ is $\Gamma_{n,m}$-invariant\,;
\vskip 0.2cm\noindent
\ {\rm (GL2)} $f$ is an eigenfunction of any differential operator in the center
${\mathscr Z}_{n,m}$ of $\BD ({\mathscr P}_{n,m})$\,;
\vskip 0.2cm\noindent
\ {\rm (GL3)} $f$ has a growth condition, i.e.,
there\ exist\ a constant  $C>0$ and $s\in \BC^{n-1}$ with  \\
\indent \ \ \ \ \ \ \
$s=(s_1,\cdots,s_{n-1})$ such that
$|f(Y,\xi)| \leq C\,|p_{-s}(Y)|$ as the upper left determinants\\
\indent \ \ \ \ \ \ \
$\det Y_j\lrt \infty\ (1\leq j\leq n-1),$ where
\begin{equation*}
  p_{-s}(Y):=\prod_{j=1}^{n-1} (\det Y_j)^{-s_j}
\end{equation*}
\indent \ \ \ \ \ \ \ \
is the Selberg's power function\,(cf.\,\cite{S1, T}).
\end{definition}
\vskip 3mm
We denote by ${\bf A}(\G_{n,m})$ the space of all automorphic forms for $\G_{n,m}$.
A {\sf cusp form} $f\in {\bf A}(\G_{n,m})$ is defined to be an automorphic form
for $\G_{n,m}$ satisfying the following conditions\,:
\begin{equation*}
\int_{(X,\eta)\in \mathfrak{R}}
f \left( Y\left[ \begin{pmatrix}
                   I_j & X \\
                   0 & I_{n-j}
                 \end{pmatrix}\right],\eta\right)dX\,d\eta=0,
                 \quad 1\leq j\leq n-1,
\end{equation*}
where $\mathfrak{R}=(\BR/\BZ)^{(j,n-j)}\times (\BR/\BZ)^{(m,n)},\
(X,\eta)\in \mathfrak{R}$ with $X\in (\BR/\BZ)^{(j,n-j)}$ and
$\eta\in (\BR/\BZ)^{(m,n)}.$ We note that $dX$\,(resp.\ $d\eta$) denotes
the Euclid volume element on $(\BR/\BZ)^{(j,n-j)}$ (resp. $(\BR/\BZ)^{(m,n)}$).
We denote by ${\bf A}_0(\G_{n,m})$ the space of all cusp forms for $\G_{n,m}$.

\begin{problem}
Find the center ${\mathscr Z}_{n,m}$
of $\BD ({\mathscr P}_{n,m})$ explicitly.
\end{problem}

\begin{problem}
Find the center of the universal enveloping algebra of the complexification of the Lie algebra
of the group $GL_{n,m}(\BR)$ explicitly.
\end{problem}

\begin{problem}
Develop the theory of harmonic analysis on $L^2(\G_{n,m}\ba \mathscr{P}_{n,m})$ with respect to
${\mathscr Z}_{n,m}$.
\end{problem}

\vskip 2mm
We may define another notion of automorphic forms as follows.
\begin{definition}
Let $\BD^{\spadesuit}(\mathscr{P}_{n,m})$ be a commutative subalgebra of
$\BD({\mathscr P}_{n,m})$ containing the Laplace operator $\Delta_{n,m;A,B}$
(see Lemma 4.1.) of ${\mathscr P}_{n,m}.$
A smooth function $f:\mathscr{P}_{n,m}\lrt \BC$ is said to be an
{\sf automorphic form} for $\G_{n,m}$ with respect to $\BD^{\spadesuit}(\mathscr{P}_{n,m})$
if it satisfies
the following properties ${\rm (GL1)}^{\spadesuit}$--${\rm (GL3)}^{\spadesuit}$\,:
\vskip 2mm
${\rm (GL1)}^{\spadesuit}$ \ \ $f(\g^*\cdot Y^*)=f(Y^*)\quad {\rm for\ all}\ \g^*\in\G_{n,m}
\ {\rm and}\ Y^*\in \mathscr{P}_{n,m}\,;$
\vskip 2mm
${\rm (GL2)}^{\spadesuit}$ $f$ is an eigenfunction for all differential operators in $\BD^{\spadesuit}(\mathscr{P}_{n,m})\,;$
\vskip 2mm
${\rm (GL3)}^{\spadesuit}$\ There\ exist\ a constant  $C>0$ and $s\in \BC^{n-1}$
with  $s=(s_1,\cdots,s_{n-1})$ such that  \\
\indent \ \ \ \ \ \ \ \ \
$|f(Y,\xi)| \leq C\,|p_{-s}(Y)|$ as the upper left determinants
$\det Y_j\lrt \infty\ (1\leq j\leq n-1),$ \\
\indent \ \ \ \ \ \ \ \ \ where
\begin{equation*}
  p_{-s}(Y):=\prod_{j=1}^{n-1} (\det Y_j)^{-s_j}
\end{equation*}
\indent \ \ \ \ \ \ \ \ \
is the Selberg's power function\,(cf.\,\cite{S1, T}).
\end{definition}

\vskip 3mm
From now on, we fix the commutative subalgebra $\BD^{\spadesuit}(\mathscr{P}_{n,m})$ of
$\BD({\mathscr P}_{n,m})$ containing the Laplace operator $\Delta_{n,m;A,B}$
of ${\mathscr P}_{n,m}.$
We denote by ${\bf A}^{\spadesuit}(\G_{n,m})$ the space of all automorphic forms for $\G_{n,m}$
with respect to $\BD^{\spadesuit}(\mathscr{P}_{n,m})$.
A {\sf cusp form} $f\in {\bf A}^{\spadesuit}(\G_{n,m})$ is defined to be an automorphic form
for $\G_{n,m}$ satisfying the following conditions\,:
\begin{equation*}
\int_{(X,\eta)\in \mathfrak{R}}
f \left( Y\left[ \begin{pmatrix}
                   I_j & X \\
                   0 & I_{n-j}
                 \end{pmatrix}\right],\eta\right)dX\,d\eta=0,
                 \quad 1\leq j\leq n-1,
\end{equation*}
where $\mathfrak{R}=(\BR/\BZ)^{(j,n-j)}\times (\BR/\BZ)^{(m,n)},\
(X,\eta)\in \mathfrak{R}$ with $X\in (\BR/\BZ)^{(j,n-j)}$ and
$\eta\in (\BR/\BZ)^{(m,n)}.$ We note that $dX$\,(resp.\ $d\eta$) denotes
the Euclid volume element on $(\BR/\BZ)^{(j,n-j)}$ (resp. $(\BR/\BZ)^{(m,n)}$).
We denote by ${\bf A}_0^{\spadesuit}(\G_{n,m})$ the space of all cusp forms for $\G_{n,m}$ with respect to $\BD^{\spadesuit}(\mathscr{P}_{n,m})$.

\vskip 3mm
We recall
$$\mathfrak{Y}_{n,m}=\G_{n,m}\ba \mathscr{P}_{n,m}\quad {\rm (see\ Formula\ (4.7))}.$$
Let $L^2(\mathfrak{Y}_{n,m})$ be the Hilbert space of all $L^2$-functions
with $L^2$-norm on $\mathfrak{Y}_{n,m}$.

\begin{problem}
Develop the theory of harmonic analysis on $L^2(\mathfrak{Y}_{n,m})$
with respect to $\BD^{\spadesuit}(\mathscr{P}_{n,m})$. In particular, develop
the spectral theory of the Laplace operator $\Delta_{n,m;A,B}$ of $\mathscr{P}_{n,m}$
on $L^2(\mathfrak{Y}_{n,m})$.
\end{problem}

\begin{problem}
Let $L^2(\G_{n,m}\ba GL_{n,m}(\BR))$ be the Hilbert space of all $L^2$-functions
with $L^2$-norm on $\G_{n,m}\ba GL_{n,m}(\BR)$.
Develop the theory of harmonic analysis on $L^2(\G_{n,m}\ba GL_{n,m}(\BR))$ via
the representation theory of $GL_{n,m}(\BR).$
\end{problem}

\begin{remark}
We refer to \cite{Y2} for the study of unitary representations of $GL_{n,m}(\BR)$.
\end{remark}
\vskip 3mm
A smooth function $f:\mathscr{P}_{n,m}\lrt \BC$ said to be a {\sf weak automorphic form}
for $\G_{n,m}$ if it satisfies the two conditions (GL1),\,(GL3) of Definition 4.1
together the following condition $({\rm GL2})^{\diamondsuit}$\,;
\vskip 2mm
$({\rm GL2})^{\diamondsuit}$ $f$ is an eigenfunction of the Laplace operator $\Delta_{n,m;A,B}$
of $\mathscr{P}_{n,m}$.

\vskip 3mm
Hopefully we may investigate the $L$-functions and Whittaker functions associated to
automorphic forms for $\G_{n,m}$, associated Hecke operator, Fourier expansions of automorphic forms,
Eisenstein series, Poincar{\'e} series and their related topics.

\end{section}

\vskip 9mm


\begin{section}{{\bf Automorphic Form for $SL(n,\BZ)\ltimes \BZ^{(m,n)}$}}
\setcounter{equation}{0}

\vskip 2mm

For two positive integers $m,\,n\in\BZ^+$, we introduce the new group
$$ SL_{n,m}(\BR):=SL(n,\BR)\ltimes \BR^{(m,n)}$$
which is the semidirect product of $SL(n,\BR)$ and the additive group $\BR^{(m,n)}.$
$SL_{n,m}(\BR)$ is endowed with the following with multiplication law
\begin{equation}\label{Formula (5.1)}
  (A,a)\circ (B,b):=(AB,a\,{}^tB^{-1}+b)
\end{equation}
for all $A,B\in SL(n,\BR)$ and $a,b\in\BR^{(m,n)}$.
We let
$$SL_{n,m}(\BZ):=SL(n,\BZ)\ltimes \BZ^{(m,n)}$$
be a discrete subgroup of $SL_{n,m}(\BR)$.
\vskip 2mm
For any two positive integers $m,\,n\in\BZ^+$, we denote
\begin{equation}
  {\mathfrak P}_{n,m}:={\mathfrak P}_n\times \BR^{(m,n)}.
\end{equation}\label{Formual (5.2)}
Then $SL_{n,m}(\BR)$ acts on ${\mathfrak P}_{n,m}$
naturally and transitively by
\begin{equation}\label{Formula (5.2)}
  (A,a)\cdot (Y,V):=(AY\,{}^tA,(V+a)\,{}^tA)
\end{equation}
for all $(A,a)\in SL_{n,m}(\BR)$ and $(Y,V)\in {\mathfrak P}_{n,m}.$
It is easily seen that the stabilizer of the action (5.3) at $(I_n,0)$ is given by
$$H_{n,m}(\BR):=\{\, (h,0)\in SL_{n,m}(\BR)\,|\ h\in SO(n,\BR)\,\}\cong SO(n,\BR).$$
Thus the non-symmetric homogeneous space $SL_{n,m}(\BR)/SO(n,\BR)$ is diffeomorphic to
the homogeneous space ${\mathfrak P}_{n,m}$ by the following correspondence
$$ (A,a)\cdot SO(n,\BR)\longmapsto (A,a)\cdot (I_n,0)=(A\,{}^t\!A,a\,{}^t\!A)\quad
{\rm for\ all}\ (A,a)\in SL_{n,m}(\BR).$$
We let
$$ \G^n:=SL(n,\BZ)\qquad {\rm and}\qquad \G^{n,m}:=\G^n \ltimes \BZ^{(m,n)}$$
be the discrete arithmetic subgroup of $SL(n,\BR)$ and $SL_{n,m}(\BR)$ respectively. We let
$${\mathfrak X}_n:=\G^n\ba {\mathfrak P}_n$$
be the {\sf special} Minkowski moduli space. If $Y_1,Y_2\in {\mathfrak P}_n$ such that
$Y_2=\gamma Y_1\,{}^t\gamma =Y_1[\,{}^t\gamma]$ for some $\g \in \G^n$, then the real torus
$T(Y_1):=\BR^{(m,n)}/L(Y_1)$ is diffeomorphic to the torus $T(Y_2):=\BR^{(m,n)}/L(Y_2)$,
where $L(Y_1)=\BZ^{(m,n)}Y_1$ and $L(Y_2)=\BZ^{(m,n)}Y_2$ are lattices in the Euclidean space
$\BR^{(m,n)}$. Let
\begin{equation}\label{Formula (5.4)}
  \mathfrak{X}_{n,m}:=\G^{n,m}\ba \mathfrak{P}_{n,m}
\end{equation}
be the universal family of special principally polarized real tori of dimension $mn$.
Roughly we say that $\mathfrak{X}_{n,m}$ is a fibre bundle over
$\mathfrak{X}_n$ with fibres given by special principally polarized real tori.
We refer to \cite{Y3,Y4} for related topics about $\mathfrak{X}_n$ and $\mathfrak{X}_{n,m}.$

\vskip 3mm
For any two positive integers $r,s\in \BZ^+$ with $r<s$ and a fixed positive integer
$m\in\BZ^+,$ we define
$$ \varphi_{r,s}:SL_{r,m}(\BR)\lrt SL_{s,m}(\BR)$$
by
\begin{equation}\label{Formula (5.5)}
  \varphi_{r,s}((A,0)):=\left( \begin{pmatrix}
                       A & 0 \\
                       0 & I_{s-r}
                     \end{pmatrix}, (a,0)\right),
\end{equation}
where $A\in SL(r,\BR),\ a\in \BR^{(m,r)}$ and $(a,0)\in \BR^{(m,s)}.$
\vskip 3mm
For a fixed positive integer $m\in\BZ^+,$ we let
$$
SL_{\infty,m}:=\,\lim_{\begin{subarray}{c} \longrightarrow\\ ^r \end{subarray}}
SL_{r,m}(\BR), \quad
H_{\infty,m}:=\,\lim_{\begin{subarray}{c} \longrightarrow\\ ^r \end{subarray}}
H_{r,m}(\BR)\quad {\rm and}\quad
\G^{\infty,m}:=\,\lim_{\begin{subarray}{c} \longrightarrow\\ ^r \end{subarray}}
\G^{r,m}
$$
be the inductive limits of the directed systems
$(SL_{r,m}(\BR),\varphi_{r,s}),\ (H_{r,m}(\BR),\varphi_{r,s})$ and \\
$(\G^{r,m}(\BR),\varphi_{r,s})$ respectively.
\vskip 3mm
For any two positive integers $r,s\in \BZ^+$ with $r<s$ and a fixed positive integer
$m\in\BZ^+,$ we define
$$\theta_{r,s}:\mathfrak{P}_{r,m}\lrt \mathfrak{P}_{s,m}$$
by
\begin{equation}\label{Formula (5.6)}
  \theta_{r,s}((Y,\alpha)):=\left( \begin{pmatrix}
                       Y & 0 \\
                       0 & I_{s-r}
                     \end{pmatrix}, (\alpha,0)\right),
\end{equation}
where $Y\in \mathfrak{P}_r, \ \alpha\in \BR^{(m,r)}$ and $(\alpha,0)\in \BR^{(m,s)}$.
We let
$$
\mathfrak{P}_{\infty,m}:=
\,\lim_{\begin{subarray}{c} \longrightarrow\\ ^r \end{subarray}} \mathfrak{P}_{r,m}
$$
be the inductive limit of the directed system $(\mathfrak{P}_{r,m},\nu_{r,s}).$
Then $SL_{\infty,m}$ acts on $\mathfrak{P}_{\infty,m}$ transitively and
$H_{\infty,m}$ is the stabilizer of this action at the origin. Furthermore
$\G^{\infty,m}$ acts on $\mathfrak{P}_{\infty,m}$ properly discontinously. We denote
\begin{equation}\label{Formula (5.7)}
  \mathfrak{P}_{\infty,m}:=SL_{\infty,m}/H_{\infty,m}
\end{equation}
and
\begin{equation}\label{Formula (5.8)}
  \mathfrak{X}_{\infty,m}=\G^{\infty,m} \ba SL_{\infty,m}/H_{\infty,m}.
\end{equation}
\vskip 2mm
We may introduce automorphic forms on $\mathfrak{P}_{\infty,m}$ for $\G^{\infty,m}$
and study the arithmetic and geometry of $\mathfrak{X}_{\infty,m}$. In fact,
$\mathfrak{X}_{\infty,m}$ is important geometrically and number theoretically.

\vskip 3mm
We denote by $\BD({\mathfrak P}_{n,m})$ the algebra of all differential operators on
${\mathfrak P}_{n,m}$ invariant under the action (5.3) of $SL_{n,m}(\BR)$.
It is known that $\BD({\mathfrak P}_{n,m})$ is not commutative (cf.\,\cite{Y3}).

\vskip 3mm
Now we present the following natural problems.
\begin{problem}
Is $\BD({\mathfrak P}_{n,m})$ finitely generated ?
\end{problem}

\begin{problem}
Find explicit generators of $\BD({\mathfrak P}_{n,m})$.
\end{problem}

\begin{problem}
Find all the relations among the generators of $\BD({\mathfrak P}_{n,m})$.
\end{problem}

\begin{remark}
It is true that $\BD({\mathfrak P}_{n,m})$ is finitely generated\,({\rm cf.}\,\cite{Y3}).
\end{remark}

\begin{problem}
Find the center ${\mathfrak Z}_{n,m}$
of $\BD ({\mathfrak P}_{n,m})$ explicitly.
\end{problem}

\begin{problem}
Find the center of the universal enveloping algebra of the complexification of the Lie algebra
of the group $SL_{n,m}(\BR)$ explicitly.
\end{problem}

\begin{problem}
Develop the theory of harmonic analysis on $L^2(\G^{n,m}\ba \mathfrak{P}_{n,m})$ with respect to
${\mathfrak Z}_{n,m}$.
\end{problem}

\vskip 3mm
Using $SL_{n,m}(\BR)$-invariant differential operators on the non-symmetric space
${\mathfrak P}_{n,m},$
we introduce a notion of automorphic forms on ${\mathfrak P}_{n,m}$.

\begin{definition} A smooth function $f:{\mathfrak P}_{n,m}\lrt \BC$ is said to be
an {\sf automorphic form} for $\Gamma^{n,m}$ if it satisfies the following conditions:
\vskip 0.2cm\noindent
\ {\rm (SL1)} $f$ is $\Gamma^{n,m}$-invariant.
\vskip 0.2cm\noindent
\ {\rm (SL2)} $f$ is an eigenfunction of any differential operator in the center
${\mathfrak Z}_{n,m}$ of $\BD ({\mathfrak P}_{n,m})$.
\vskip 0.2cm\noindent
\ {\rm (SL3)} $f$ has a growth condition, i.e.,
there\ exist\ a constant  $C>0$ and $s\in \BC^{n-1}$ with  \\
\indent \ \ \ \ \ \ \
$s=(s_1,\cdots,s_{n-1})$ such that
$|f(Y,\xi)| \leq C\,|p_{-s}(Y)|$ as the upper left determinants\\
\indent \ \ \ \ \ \ \
$\det Y_j\lrt \infty\ (1\leq j\leq n-1),$ where
\begin{equation*}
  p_{-s}(Y):=\prod_{j=1}^{n-1} (\det Y_j)^{-s_j}
\end{equation*}
\indent \ \ \ \ \ \ \ \
is the Selberg's power function\,(cf.\,\cite{S1, T}).
\end{definition}
\vskip 3mm
We denote by ${\bf A}(\G^{n,m})$ the space of all automorphic forms for $\G^{n,m}$.
A {\sf cusp form} $f\in {\bf A}(\G^{n,m})$ is defined to be an automorphic form
for $\G^{n,m}$ satisfying the following conditions\,:
\begin{equation*}
\int_{(X,\eta)\in \mathfrak{R}}
f \left( Y\left[ \begin{pmatrix}
                   I_j & X \\
                   0 & I_{n-j}
                 \end{pmatrix}\right],\eta\right)dX\,d\eta=0,
                 \quad 1\leq j\leq n-1,
\end{equation*}
where $\mathfrak{R}=(\BR/\BZ)^{(j,n-j)}\times (\BR/\BZ)^{(m,n)},\
(X,\eta)\in \mathfrak{R}$ with $X\in (\BR/\BZ)^{(j,n-j)}$ and
$\eta\in (\BR/\BZ)^{(m,n)}.$ We note that $dX$\,(resp.\ $d\eta$) denotes
the Euclid volume element on $(\BR/\BZ)^{(j,n-j)}$ (resp. $(\BR/\BZ)^{(m,n)}$).
We denote by ${\bf A}_0(\G^{n,m})$ the space of all cusp forms for $\G^{n,m}$.

\vskip 0.5cm
We may define another notion of automorphic forms as follows.
\begin{definition}
Let $\BD^{\spadesuit}(\mathfrak{P}_{n,m})$ be a commutative subalgebra of
$\BD({\mathscr P}_{n,m})$ containing the Laplace operator of ${\mathfrak P}_{n,m}.$
A smooth function $f:\mathfrak{P}_{n,m}\lrt \BC$ is said to be an
{\sf automorphic form} for $\G^{n,m}$ with respect to $\BD^{\spadesuit}(\mathfrak{P}_{n,m})$
if the following properties ${\rm (SL1)}^{\spadesuit}$--${\rm (SL3)}^{\spadesuit}$\,:
\vskip 2mm
${\rm (SL1)}^{\spadesuit}$ \ \ $f(\g^*\cdot Y^*)=f(Y^*)\quad {\rm for\ all}\ \g^*\in \G^{n,m}
\ {\rm and}\ Y^*\in \mathfrak{P}_{n,m}\,;$
\vskip 2mm
${\rm (SL2)}^{\spadesuit}$ $f$ is an eigenfunction for all differential operators in $\BD^{\spadesuit}(\mathfrak{P}_{n,m})\,;$
\vskip 2mm
${\rm (SL3)}^{\spadesuit}$\ There\ exist\ a constant  $C>0$ and $s\in \BC^{n-1}$
with  $s=(s_1,\cdots,s_{n-1})$ such that  \\
\indent \ \ \ \ \ \ \ \ \
$|f(Y,\xi)| \leq C\,|p_{-s}(Y)|$ as the upper left determinants
$\det Y_j\lrt \infty\ (1\leq j\leq n-1),$ \\
\indent \ \ \ \ \ \ \ \ \ where
\begin{equation*}
  p_{-s}(Y):=\prod_{j=1}^{n-1} (\det Y_j)^{-s_j}
\end{equation*}
\indent \ \ \ \ \ \ \ \ \
is the Selberg's power function\,(cf.\,\cite{S1, T}).
\end{definition}

\vskip 3mm
From now on, we fix the commutative subalgebra $\BD^{\spadesuit}(\mathfrak{P}_{n,m})$ of
$\BD({\mathfrak P}_{n,m})$ containing the Laplace operator of ${\mathfrak P}_{n,m}.$
We denote by ${\bf A}^{\spadesuit}(\G^{n,m})$ the space of all automorphic forms for $\G^{n,m}$
with respect to $\BD^{\spadesuit}(\mathfrak{P}_{n,m})$.
A {\sf cusp form} $f\in {\bf A}^{\spadesuit}(\G^{n,m})$ is defined to be an automorphic form
for $\G^{n,m}$ satisfying the following conditions\,:
\begin{equation*}
\int_{(X,\eta)\in \mathfrak{R}}
f \left( Y\left[ \begin{pmatrix}
                   I_j & X \\
                   0 & I_{n-j}
                 \end{pmatrix}\right],\eta\right)dX\,d\eta=0,
                 \quad 1\leq j\leq n-1,
\end{equation*}
where $\mathfrak{R}=(\BR/\BZ)^{(j,n-j)}\times (\BR/\BZ)^{(m,n)},\
(X,\eta)\in \mathfrak{R}$ with $X\in (\BR/\BZ)^{(j,n-j)}$ and
$\eta\in (\BR/\BZ)^{(m,n)}.$ We note that $dX$\,(resp.\ $d\eta$) denotes
the Euclid volume element on $(\BR/\BZ)^{(j,n-j)}$ (resp. $(\BR/\BZ)^{(m,n)}$).
We denote by ${\bf A}_0^{\spadesuit}(\G^{n,m})$ the space of all cusp forms for $\G^{n,m}$ with respect to $\BD^{\spadesuit}(\mathfrak{P}_{n,m})$.

\vskip 3mm
We recall
$$\mathfrak{X}_{n,m}=\G^{n,m}\ba \mathfrak{P}_{n,m}\quad {\rm (see\ Formula\ (5.4))}.$$
Let $L^2(\mathfrak{X}_{n,m})$ be the Hilbert space of all $L^2$-functions
with $L^2$-norm on $\mathfrak{X}_{n,m}$.

\begin{problem}
Develop the theory of harmonic analysis on $L^2(\mathfrak{X}_{n,m})$
with respect to $\BD^{\spadesuit}(\mathfrak{P}_{n,m})$. In particular, develop
the spectral theory of the Laplace operator of $\mathfrak{P}_{n,m}$
on $L^2(\mathfrak{X}_{n,m})$.
\end{problem}

\begin{problem}
Let $L^2(\G^{n,m}\ba SL_{n,m}(\BR))$ be the Hilbert space of all $L^2$-functions
with $L^2$-norm on $\G^{n,m}\ba SL_{n,m}(\BR)$.
Develop the theory of harmonic analysis on $L^2(\G^{n,m}\ba SL_{n,m}(\BR))$ via
the representation theory of $SL_{n,m}(\BR).$
\end{problem}

\begin{remark}
We refer to \cite{Y1} for the study of unitary representations of $SL_{2,m}(\BR)$.
\end{remark}
\vskip 3mm
A smooth function $f:\mathfrak{P}_{n,m}\lrt \BC$ said to be a {\sf weak automorphic form}
for $\G^{n,m}$ if it satisfies the two conditions (SL1),\,(SL3) of Definition 5.1
together the following condition $({\rm GL2})^{\diamondsuit}$\,;
\vskip 2mm
$({\rm SL2})^{\diamondsuit}$ $f$ is an eigenfunction of the Laplace operator
of $\mathfrak{P}_{n,m}$.

\vskip 3mm
We put $\mathfrak{H}_{n,m}:=\mathfrak{H}_n\times \BR^{(m,n)}.$ See Definition 3.1
for the definition of $\mathfrak{H}_n$. Let $\BD(\mathfrak{H}_{n,m})$ be the algebra of
all $SL_{n,m}(\BR)$-invariant differential operators on $\mathfrak{H}_{n,m}.$

\vskip 2mm
For any $\nu=(\nu_1,\nu_2,\cdots,\nu_{n-1}),$ we define the function
$\widetilde{I}_\nu:{\mathfrak H}_{n,m}\lrt \BC$ by
\begin{equation}
  \widetilde{I}_\nu (z,v):=\prod_{i=1}^{n-1} \prod_{i=1}^{n-1}y_i^{b_{ij}\nu_j}
  \quad {\rm for\ all}\ z=x\cdot y \in {\mathfrak H}_n \ {\rm and}\ v\in \BR^{(m,n)},
\end{equation}
where
$$  b_{ij}:=\begin{cases}
            \ \ \ ij, & {\rm if }\ \, i+j\leq n \\
            (n-i)(n-j), & {\rm if}\ \, i+j\geq n.
          \end{cases}
$$
We denote by ${\widetilde{\mathfrak Z}}_{n,m}$ the center of ${\mathbb D}({\mathfrak H}_{n,m}).$
Then we see that $\widetilde{I}_\nu(z,v)$ is an eigenfunction of the center
${\widetilde{\mathfrak Z}}_{n,m}$.
Let us write
\begin{equation}\label{Formula (5.10)}
  D\widetilde{I}_\nu(z,v)=\mu_D\cdot \widetilde{I}_\nu(z,v)\quad
  {\rm for\ every}\ D\in {\widetilde{\mathfrak Z}}_{n,m}.
\end{equation}
Since
$$\mu_{D_1D_2}=\mu_{D_1}
\mu_{D_2}\qquad {\rm for\ all}\ D_1,D_2\in
{\widetilde{\mathfrak Z}}_{n,m},$$
the function $\mu_D$ (viewed as a function of $D$) is a character of
${\widetilde{\mathfrak Z}}_{n,m}$.

\vskip 3mm
We recall Definition 3.2 for Maass forms for $\G^n$ defined by Goldfeld\,(see also
\cite[Definition 5.13,\, pp.\,115--116]{Go}).
We generalize these Maass forms for $\G^n$ to Maass forms on $\mathfrak{H}_{n,m}$
for $\G^{n,m}$ in the following way. First of all, we identify $\mathfrak{H}_{n,m}$
with $\mathfrak{P}_{n,m}$.

\begin{definition}
Let $n$ and $m$ be two positive integers with $n\geq 2.$
For any $\nu=(\nu_1,\nu_2,\cdots,\nu_{n-1})$\\
\noindent
$\in \BC^{n-1},$ a smooth function $f:{\mathfrak H}_{n,m}\lrt \BC$ is
said to be a {\sf Maass\ form} on ${\mathfrak H}_{n,m}$ for $\G^{n,m}$ of
type $\nu$ if it satisfies the following conditions {\rm (MF1)}--{\rm (MF3)}\,:
\vskip 2mm\noindent
${\rm (MF1)}\ \ f(\gamma^*\cdot z^*)=f(z^*)\quad {\rm for\ all}\ \gamma^*\in \G^{n,m}
\ {\rm and}\ z^*\in {\mathfrak H}_{n,m};$
\vskip 2mm\noindent
${\rm (MF2)}\ \ Df(z^*)=\mu_D f(z^*)\quad {\rm for\ all}\ D\in \widetilde{\mathfrak{Z}}_{n,m}$
\ {\rm with\ eigenvalue}\ $\mu_D$\ {\rm given\ by}\ {\rm (5.10)};
\vskip 2mm\noindent
${\rm (MF3)}\ \ \int_{(\G^{n,m}\cap U^*)\ba U^*} f(u^*z^*)du^*=0$
\ \ for all subgroups $U^*$ of the form $U^*=U\times \BR^{(m,n)}$ \\
\indent \ \ \ \ \ with
$$U=\left\{
\begin{pmatrix}
  I_{r_1} & * & * & * \\
  0 & I_{r_2} & * & * \\
  0 & 0 & \ddots & * \\
  0 & 0 & 0 & I_{r_b}
\end{pmatrix}
\right\}\subset SL(n,\BR)$$
\indent \ \ \ \ \
with $r_1+r_2+\cdots+r_b=n.$ Here $I_r$ denotes the $r\times r$ identity matrix and
$*$ denotes \\
\indent \ \ \ \ \ arbitrary real matrices.
\end{definition}

\vskip 3mm
Hopefully we may investigate the $L$-functions and Whittaker functions associated to
Maass forms for $\G^{n,m}$, associated Hecke operator, Fourier expansions of automorphic forms,
Eisenstein series, Poincar{\'e} series and their related topics.

\end{section}


\vskip 1cm

\end{document}